\theoremstyle{plain}
\newtheorem{thr}{Theorem}[section]
\newtheorem{p}[thr]{Problem}
\newtheorem{lem}[thr]{Lemma}
\newtheorem{prop}[thr]{Proposition}
\newtheorem{conj}[thr]{Conjecture}
\theoremstyle{definition}
\newtheorem{defi}[thr]{Definition}
\newtheorem{cor}[thr]{Corollary}
\def\P{\mathcal{P}}
\def\Q{\mathcal{Q}}
\DeclareMathOperator{\ecc}{ecc}
\DeclareMathOperator{\rad}{rad}
\title{Extremal total distance of graphs of given radius}
\author{Stijn Cambie\footnote{Department of Mathematics, Radboud University Nijmegen, Postbus 9010, 6500 GL Nijmegen, The Netherlands. Email: \href{mailto:S.Cambie@math.ru.nl}{S.Cambie@math.ru.nl}. This work has been supported by a Vidi Grant of the Netherlands Organization for Scientific Research (NWO), grant number $639.032.614$.} }%
\date{}
\begin{document}
	\definecolor{xdxdff}{rgb}{0.49019607843137253,0.49019607843137253,1.}
	\definecolor{ududff}{rgb}{0.30196078431372547,0.30196078431372547,1.}
	
	\tikzstyle{every node}=[circle, draw, fill=black!50,
	inner sep=0pt, minimum width=4pt]

	\maketitle

	\begin{abstract}
		In 1984, Plesn\'{i}k determined the minimum total distance for given order and diameter and characterized the extremal graphs and digraphs.
		We prove the analog for given order and radius, when the order is sufficiently large compared to the radius.
		This confirms asymptotically a conjecture of Chen et al. % which is not completely true in general.
		We show the connection between minimizing the total distance and maximizing the size under the same conditions. 
		We also prove some asymptotically optimal bounds for the maximum total distance.
		%%This solves also assymptotically the problem of finding an upper bound for the average distance of a graph in terms of its order and radius.
	\end{abstract}

	\section{Introduction}

The total distance $W(G)$ of a graph $G$ equals the sum of distances between all unordered pairs of vertices, i.e. $W(G)=\sum_{\{u,v\} \subset V} d(u,v).$
In 1984, Plesn\'{i}k~\cite{P84} determined the minimum total distance among all graphs of order $n$ and diameter $d$. He did this both for graphs and digraphs and characterized the extremal examples.
%In 2019, the author~\cite{SC19} solved asymptotically the maximum case of this problem.
In this paper we solve the analogous questions for given order $n$ and radius $r$, when $n$ is sufficiently large compared with $r$.

The extremal graphs attaining the minimal total distance among all graphs with order $n$ and radius $r \in \{1,2\}$ are easily characterized; complete graphs when $r=1$, complete graphs minus a maximum matching when $r=2$ and $2 \mid n$ and complete graphs minus a maximum matching and an additional edge adjacent to the vertex not in the maximum matching, when $r=2$ and $2 \nmid n.$
For $r\ge 3$ the question is harder and a conjecture of the extremal graphs was made by Chen, Wu and An~\cite{Chen}. 
Here $G_{n,r,s}$ is a cycle $C_{2r}$ in which we take blow-ups in $2$ consecutive vertices by cliques $K_s$ and $K_{n-2r+2-s}$, as defined in Section~\ref{not&def}.
\begin{conj}[\cite{Chen}]\label{conjchen}
	Let $n$ and $r$ be two positive integers with $n \ge 2r$ and $r \ge 3$. For any graph $G$ of order $n$ with radius $r$, $W(G) \ge W(G_{n,r,1})$. Equality holds if and only if $G \cong G_{n,r,s}$ for some $1 \le s \le \frac{n-2r+2}{2}$.
\end{conj}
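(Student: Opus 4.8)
The plan is to first pin down the target value $W(G_{n,r,1})$ and then prove it is a strict lower bound for every other radius-$r$ graph. To evaluate the target, observe that in $G_{n,r,s}$ the two blown-up positions of the cycle $C_{2r}$ are adjacent, so the cliques $K_s$ and $K_{n-2r+2-s}$ together with all edges between them form a single clique on $n-2r+2$ vertices, contributing $\binom{n-2r+2}{2}$ independently of $s$. The only $s$-dependent interaction is that of the two blown-up positions with the remaining $2r-2$ cycle vertices; but each pole has the \emph{same} transmission $r^2-1$ to those vertices (the full cyclic transmission $r^2$ of a vertex of $C_{2r}$ minus the distance $1$ to the other pole), so this part equals $(n-2r+2)(r^2-1)$ regardless of how the $n-2r+2$ clique vertices split between the poles. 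Hence all $G_{n,r,s}$ share the same total distance $W^\ast:=W(G_{n,r,1})$, which already explains the shape of the conjectured equality case.

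Now fix an arbitrary graph $G$ of order $n$ and radius $r$, choose a center $c$ with $\ecc(c)=r$, and consider the distance layers $N_i=\{v:d(c,v)=i\}$; connectivity forces every $N_i$ with $0\le i\le r$ to be nonempty. Writing $W(G)=\binom n2+\sum_{k\ge 2}p_{\ge k}$, where $p_{\ge k}$ counts pairs at distance at least $k$, the task is to bound the excess $\sum_{k\ge 2}p_{\ge k}$ from below by $W^\ast-\binom n2$. The first goal is to show that a minimizer is a huge clique with a bounded attachment. The key dichotomy is that any vertex at distance $\ge 3$ from $c$ is nonadjacent to all of $N_1$: if $N_1$ is large such deep vertices are very expensive, while if $N_1$ is small then many vertices occupy the outer layers and the triangle inequality $d(u,v)\ge|i-j|$ through $c$ already forces excess beyond $W^\ast-\binom n2$. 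A counting argument along these lines should bound the number of vertices outside a largest clique $Q$ by a constant depending only on $r$. Once this holds, $Q$ must be made as complete as possible: any nonedge that can be filled in without creating a vertex of eccentricity $<r$ strictly decreases $W$, and since the deep attachment already supplies the eccentricity, a minimizer fills in all such edges.

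This reduces the conjecture to a finite optimization: a clique $Q$ of size $n-O_r(1)$ together with a gadget on $O_r(1)$ vertices, subject to the binding constraint that \emph{every} vertex of $Q$ has eccentricity at least $r$. For a single clique vertex this costs a transmission of at least $1+2+\cdots+r$, but the decisive point is that one cheap shared ``arm'' does not suffice: in a lollipop the neighbour of the clique on the arm has eccentricity only $r-1$, so any feasible gadget must spread the far vertices over at least two arms, exactly as in $C_{2r}$. I would make this quantitative by showing that every feasible gadget contributes at least as much excess as the cycle blow-up does---equivalently a clique-to-gadget distance sum of at least $r^2-1$ per clique vertex together with the minimal count $2r-2$ of gadget vertices---with equality forcing the two arms to close into a $C_{2r}$ attached at two adjacent poles. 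Distributing $Q$ over the two poles then produces precisely the family $G_{n,r,s}$, the first step supplies equality, and strictness everywhere else yields uniqueness.

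The main obstacle is exactly the step that forces \emph{two} arms rather than one. A naive layer-based bound using only $d(u,v)\ge|i-j|$ recovers merely about half of the true excess, because its optimum is the infeasible single-arm (lollipop) profile; the correct inequality must encode the full requirement that \emph{no} vertex, not just the center, has eccentricity below $r$, and it is this global constraint that doubles the cost from roughly $\tfrac12 r^2$ to $r^2-1$ per clique vertex. Turning this into a clean inequality that is tight only for the cycle, and ruling out the many competing bounded gadgets without ties, is the delicate combinatorial core; it is also where the hypothesis that $n$ is large compared with $r$ enters, since it renders the $O_r(1)$ gadget negligible against the $\Theta(n)$ clique and lets a finite check complete the argument. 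Removing that hypothesis to reach all $n\ge 2r$ would additionally require tracking the $O_r(1)$ lower-order terms exactly.
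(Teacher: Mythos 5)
Your proposal is a strategy outline whose two decisive steps are exactly the ones left unproven. First, the reduction of a minimizer to ``a clique on $n-O_r(1)$ vertices plus a gadget on $O_r(1)$ vertices'' is only asserted (``a counting argument along these lines should bound\dots''); no such argument is given, and it is in fact stronger than what the paper ever establishes. The paper's route is different and deliberately weaker: from $W(G)<\binom n2+(r-1)^2n$ it extracts by a greedy algorithm (Lemma~\ref{lem1}) a clique of order at least $n/8a$ whose vertices have degree at least $n-4a$, and then (Lemma~\ref{lem2}) merely a single vertex $v$ of that clique whose deletion preserves the radius and all pairwise distances --- it never needs to show the complement of the largest clique is bounded. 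Second, and more seriously, what you call the ``delicate combinatorial core'' --- a sharp inequality showing every feasible gadget costs at least $r^2-1$ per clique vertex and at least $2r-2$ gadget vertices, with equality only when the two arms close into a $C_{2r}$ attached at two adjacent poles --- is essentially the conjecture restated as a finite optimization, and you offer no proof of it beyond the observation that a single arm (lollipop) is infeasible. The paper circumvents this global optimization entirely: Lemma~\ref{lem3} proves the local increment $W(G)\ge W(G\setminus v)+(n-1)+(r-1)^2$ for any deletable vertex $v$, via a QM--AM estimate over two shortest paths through $v$, and its equality case already forces the two-arm path $\Q=w_r\ldots w_1u_1\ldots u_r$ with everything else adjacent to $v$. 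Theorem~\ref{main} then follows by repeatedly peeling vertices: if no step is tight the total distance exceeds $\binom n2+(r-1)^2n$, a contradiction, and at the first tight step a short case analysis pins down $G_{m,r,s}$.

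A further caution: the statement as given (all $n\ge 2r$) is false in its equality part, since $Q_3$ achieves $W(Q_3)=W(G_{8,3,s})=48$ at $n=8$, $r=3$, as the paper itself points out. So your closing suggestion that the large-$n$ hypothesis could be removed by ``tracking the $O_r(1)$ lower-order terms exactly'' cannot work; the hypothesis is essential, and any finite check of competing gadgets must expect genuine ties for small $n$. This also shows that your claim of ``strictness everywhere else'' for non-cycle gadgets is not merely delicate but false without a lower bound on $n$, so the dependence on $n$ being large must enter your gadget inequality itself, not only the negligibility of the gadget against the clique.
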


Although there are counterexamples to Conjecture~\ref{conjchen} when $n$ is small (as we discuss below), we will show that Conjecture~\ref{conjchen} is true asymptotically, i.e. when $n\ge n_1(r)$ for some value $n_1(r)$, in Section~\ref{AsProofChen}.
\begin{thr}\label{main}
	For any $r \ge 3$, there exists a value $n_1(r)$ such that for all $n \ge n_1(r)$ the following hold
	\begin{itemize}
		\item any graph $G$ of order $n$ with radius $r$ satisfies $W(G) \ge W(G_{n,r,1})$. Equality holds if and only if $G \cong G_{n,r,s}$ for some $1 \le s \le \frac{n-2r+2}{2}$.
	\end{itemize}
	
\end{thr}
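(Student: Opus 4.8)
The plan is to work with the \emph{defect} $\mathrm{def}(G) := W(G) - \binom{n}{2} = \sum_{\{u,v\}}(d(u,v)-1) = \sum_{k\ge 2} q_k(G)$, where $q_k(G)$ is the number of pairs at distance at least $k$; minimizing $W$ is equivalent to minimizing $\mathrm{def}$. A direct computation gives $\mathrm{def}(G_{n,r,1}) = (r-1)^2\, n + O_r(1)$, the leading term coming entirely from the pairs joining the large clique to the $2r-1$ peripheral vertices: each clique vertex sees peripheral distances $1,1,2,2,\dots,r-1,r-1,r$, hence contributes excess exactly $(r-1)^2$. So the goal is a matching lower bound, and I cannot afford to lose even the linear-order constant. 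The naive estimates are all off by a constant factor: charging each vertex along one shortest path witnessing its eccentricity $\ge r$ gives only $\mathrm{def}\ge \tfrac{n}{2}\binom{r}{2}$, and the crude layer bound $d(u,v)\ge|i-j|$ from a single BFS gives coefficient about $r^2/4$ rather than $(r-1)^2\approx r^2$. Both fail because they charge only one of the \emph{two} ``arms'' that a radius (as opposed to diameter) constraint forces on every vertex.

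First I would establish stability. Since we may assume $W(G)\le W(G_{n,r,1})$, we have $q_2(G)=O_r(n)$ non-edges, so $G$ has a dense core; a cleaning argument (discarding the few vertices carrying many non-neighbours) extracts a clique $A$ with $|A| = n - O_r(1)$ together with a small peripheral set $P = V\setminus A$. The structural fact to prove here is that the radius-$r$ condition forces $P$, through its attachment to $A$, to realize eccentricity $\ge r$ at \emph{every} vertex: a single pendant path of length $r$ would leave the near-clique vertices with eccentricity only $r-1$, so $P$ must supply two arms reaching depth about $r$, i.e.\ a $C_{2r}$-like gadget threaded through $A$.

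With this core/periphery split, the problem reduces to a finite optimization. The $\binom{|A|}{2}$ pairs inside $A$ are already at distance $1$, the $O_r(1)$ pairs inside $P$ contribute $O_r(1)$, and the decisive quantity is $\sum_{a\in A}\sum_{p\in P}(d(a,p)-1) = |A|\cdot(\text{cost of the gadget}) + O_r(1)$. I would then show that, over all gadgets attached to $A$ producing radius exactly $r$, this cost is minimized precisely by blowing up two consecutive vertices of $C_{2r}$ into $A$ (possibly split into two cliques), with minimum value $(r-1)^2$, and that the admissible optima are exactly the graphs $G_{n,r,s}$ for $1\le s\le \frac{n-2r+2}{2}$. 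Tracking the $O_r(1)$ terms then upgrades the leading-order inequality to the exact bound $W(G)\ge W(G_{n,r,1})$ with the stated equality cases, for all $n\ge n_1(r)$.

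The main obstacle is the stability/reduction step combined with the exactness demanded at the end. Converting ``few non-edges'' into ``huge clique plus a bounded, combinatorially controlled periphery'' is delicate; in particular, vertices having only a single non-neighbour in $A$ must be shown to be strictly suboptimal rather than merely tolerated. Moreover, since the target and the truth differ only in lower-order ($O_r(1)$) terms, the final gadget optimization must be carried out exactly, not merely asymptotically, and it is precisely there that the full list $G_{n,r,s}$ of extremal graphs emerges (and that the threshold $n\ge n_1(r)$ becomes necessary, given the small-$n$ counterexamples). I expect the connection to the maximum-size problem advertised in the abstract to enter at exactly this point: the size-extremal graphs are again the $G_{n,r,s}$, and a stability version of that result is the natural engine for locating the dense core.
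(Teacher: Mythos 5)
Your key counting insight is the right one---the paper's central lemma is exactly the two-arm bound you describe (every vertex of a radius-$r$ graph forces defect $(r-1)^2$ rather than $\binom{r}{2}$), and your closing ``finite gadget optimization'' corresponds to the paper's analysis of the equality case. But there is a genuine gap at the step you treat as routine preprocessing: extracting a \emph{clique} $A$ with $|A|=n-O_r(1)$. From $W(G)\le W(G_{n,r,1})$ you only get $O_r(n)$ non-edges, and $O_r(n)$ non-edges do not yield a clique of size $n-O_r(1)$ by any cleaning argument: the complement could be a bounded-degree graph touching $\Omega(n)$ vertices, in which case the clique number is a constant fraction of $n$ (e.g.\ $K_n$ minus a perfect matching has only $n/2$ non-edges but clique number $n/2$). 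Discarding the vertices with many non-neighbours leaves a dense core, not a clique, and a dense core is not enough for your charging scheme: once $A$ is merely dense, the two arms witnessing the eccentricity of $a\in A$ may re-enter $A$ at distance $2$, those $A$--$A$ pairs are shared between different vertices' arms, and the double counting collapses your bound back to $\mathrm{def}(G)\ge\frac{n}{2}(r-1)^2$---off by exactly the factor $2$ you yourself flagged as fatal. In effect, the statement ``defect at most $(r-1)^2n+O_r(1)$ plus radius $r$ implies a clique of size $n-O_r(1)$'' \emph{is} the theorem, so invoking it as a stability step is circular.

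The paper breaks this circularity with a mechanism your proposal has no substitute for: vertex deletion and telescoping. It extracts only a linear-size clique of high-degree vertices (which \emph{does} follow from $O_r(n)$ non-edges), uses it to find a vertex $v$ whose removal preserves the radius and all pairwise distances, and proves the increment bound $W(G)\ge W(G\setminus v)+(n-1)+(r-1)^2$ together with an exact equality characterization. Iterating the deletion down to a graph of bounded order charges every pair to exactly one of its endpoints (the one removed first); this is what recovers the full coefficient $(r-1)^2 n$ with no factor-$2$ loss, and the first step at which equality holds hands over the $C_{2r}$-gadget structure that your final optimization needs. Your suggested alternative engine---a stability version of the maximum-size (Vizing) theorem---cannot fill the hole either: a $W$-minimizer is only guaranteed to have at most about $(r-1)^2n$ non-edges, whereas size-extremal radius-$r$ graphs have about $(2r-3)n$, so $W$-minimizers need not be anywhere near size-extremal and no off-the-shelf stability statement applies to them.
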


There are some main ideas in the proof which are somewhat intuitive.
Since the minimum average distance is close to $1$, we expect there are many vertices of high degree and there is a large clique.
Because the conjectured extremal graphs contain large blow-ups, we can expect there are vertices such that $G \backslash v$ satisfies the original statement as well. By proving that the total distance $W(G)$ and $W(G \backslash v)$ differ by a certain amount with equality if and only if a structure close to the conjectured structure appears, at the end we only need to prove that an extremal graph is exactly of the form $G_{n,r,s}.$

For small values of $n$ with respect to a fixed $r$, there might be a few exceptions to Conjecture~\ref{conjchen}. % and  Conjecture~\ref{conjchendigraph}.
%	The conjecture is true in this range, except for the iff-part when $r=3$ and $n=8$, as then $Q_3$ is also an extremal graph with radius $3$ and Wiener index $48$. 
The graph $Q_3$ is a counterexample for the equality statement when $r=3$ and $n=8$, as it also has a total distance equal to $48.$ 
A computer check has shown that this is the only counterexample for $n<10.$
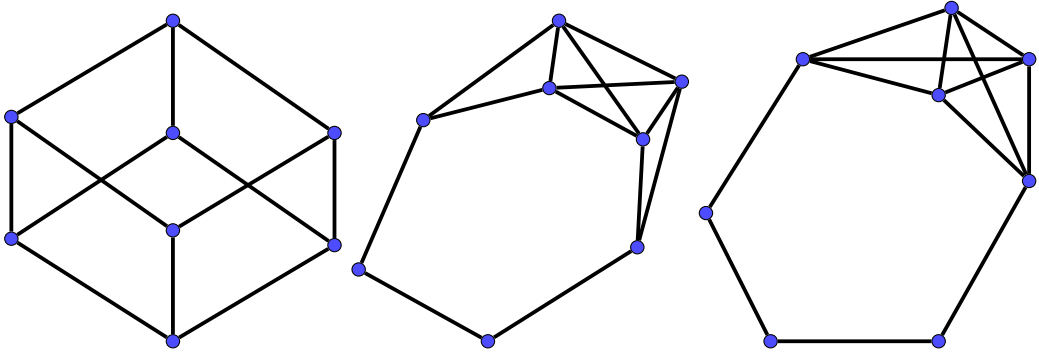
\begin{figure}[h]

	\begin{tikzpicture}[scale=0.85]
	\definecolor{cv0}{rgb}{0.0,0.0,0.0}
	\definecolor{cfv0}{rgb}{1.0,1.0,1.0}
	\definecolor{clv0}{rgb}{0.0,0.0,0.0}
	\definecolor{cv1}{rgb}{0.0,0.0,0.0}
	\definecolor{cfv1}{rgb}{1.0,1.0,1.0}
	\definecolor{clv1}{rgb}{0.0,0.0,0.0}
	\definecolor{cv2}{rgb}{0.0,0.0,0.0}
	\definecolor{cfv2}{rgb}{1.0,1.0,1.0}
	\definecolor{clv2}{rgb}{0.0,0.0,0.0}
	\definecolor{cv3}{rgb}{0.0,0.0,0.0}
	\definecolor{cfv3}{rgb}{1.0,1.0,1.0}
	\definecolor{clv3}{rgb}{0.0,0.0,0.0}
	\definecolor{cv4}{rgb}{0.0,0.0,0.0}
	\definecolor{cfv4}{rgb}{1.0,1.0,1.0}
	\definecolor{clv4}{rgb}{0.0,0.0,0.0}
	\definecolor{cv5}{rgb}{0.0,0.0,0.0}
	\definecolor{cfv5}{rgb}{1.0,1.0,1.0}
	\definecolor{clv5}{rgb}{0.0,0.0,0.0}
	\definecolor{cv6}{rgb}{0.0,0.0,0.0}
	\definecolor{cfv6}{rgb}{1.0,1.0,1.0}
	\definecolor{clv6}{rgb}{0.0,0.0,0.0}
	\definecolor{cv7}{rgb}{0.0,0.0,0.0}
	\definecolor{cfv7}{rgb}{1.0,1.0,1.0}
	\definecolor{clv7}{rgb}{0.0,0.0,0.0}
	\definecolor{cv0v4}{rgb}{0.0,0.0,0.0}
	\definecolor{cv0v5}{rgb}{0.0,0.0,0.0}
	\definecolor{cv0v6}{rgb}{0.0,0.0,0.0}
	\definecolor{cv1v4}{rgb}{0.0,0.0,0.0}
	\definecolor{cv1v5}{rgb}{0.0,0.0,0.0}
	\definecolor{cv1v7}{rgb}{0.0,0.0,0.0}
	\definecolor{cv2v4}{rgb}{0.0,0.0,0.0}
	\definecolor{cv2v6}{rgb}{0.0,0.0,0.0}
	\definecolor{cv2v7}{rgb}{0.0,0.0,0.0}
	\definecolor{cv3v5}{rgb}{0.0,0.0,0.0}
	\definecolor{cv3v6}{rgb}{0.0,0.0,0.0}
	\definecolor{cv3v7}{rgb}{0.0,0.0,0.0}

	\node[label=center:\(\),fill=ududff,style={minimum size=5,shape=circle}] (v0) at (2.5,1.73) {};
	\node[label=center:\(\),fill=ududff,style={minimum size=5}] (v1) at (2.5,5) {};
	\node[label=center:\(\),fill=ududff,style={minimum size=5,shape=circle}] (v2) at (5,1.5) {};
	\node[label=center:\(\),fill=ududff,style={minimum size=5,shape=circle}] (v3) at (0,1.6) {};
	\node[label=center:\(\),fill=ududff,style={minimum size=5,shape=circle}] (v4) at (5,3.25) {};
	\node[label=center:\(\),fill=ududff,style={minimum size=5,shape=circle}] (v5) at (0,3.5) {};
	\node[label=center:\(\),fill=ududff,style={minimum size=5,shape=circle}] (v6) at (2.5,0) {};
	\node[label=center:\(\),fill=ududff,style={minimum size=5,shape=circle}] (v7) at (2.5,3.25) {};

%	\Vertex[style={minimum size=1.0cm,draw=cv0,fill=cfv0,text=clv0,shape=circle},LabelOut=false,L=\hbox{$0$},x=2.4086cm,y=1.7328cm]{v0}
%	\Vertex[style={minimum size=1.0cm,draw=cv1,fill=cfv1,text=clv1,shape=circle},LabelOut=false,L=\hbox{$1$},x=2.5642cm,y=5.0cm]{v1}
%	\Vertex[style={minimum size=1.0cm,draw=cv2,fill=cfv2,text=clv2,shape=circle},LabelOut=false,L=\hbox{$2$},x=4.9844cm,y=1.5364cm]{v2}
%	\Vertex[style={minimum size=1.0cm,draw=cv3,fill=cfv3,text=clv3,shape=circle},LabelOut=false,L=\hbox{$3$},x=0.0cm,y=1.6318cm]{v3}
%	\Vertex[style={minimum size=1.0cm,draw=cv4,fill=cfv4,text=clv4,shape=circle},LabelOut=false,L=\hbox{$4$},x=5.0cm,y=3.2536cm]{v4}
%	\Vertex[style={minimum size=1.0cm,draw=cv5,fill=cfv5,text=clv5,shape=circle},LabelOut=false,L=\hbox{$5$},x=0.0237cm,y=3.5408cm]{v5}
%	\Vertex[style={minimum size=1.0cm,draw=cv6,fill=cfv6,text=clv6,shape=circle},LabelOut=false,L=\hbox{$6$},x=2.3354cm,y=0.0cm]{v6}
%	\Vertex[style={minimum size=1.0cm,draw=cv7,fill=cfv7,text=clv7,shape=circle},LabelOut=false,L=\hbox{$7$},x=2.6499cm,y=3.2628cm]{v7}
	%
\Edge[lw=0.05cm,style={color=cv0v4,},](v0)(v4)
\Edge[lw=0.05cm,style={color=cv0v5,},](v0)(v5)
\Edge[lw=0.05cm,style={color=cv0v6,},](v0)(v6)
\Edge[lw=0.05cm,style={color=cv1v4,},](v1)(v4)
\Edge[lw=0.05cm,style={color=cv1v5,},](v1)(v5)
\Edge[lw=0.05cm,style={color=cv1v7,},](v1)(v7)
\Edge[lw=0.05cm,style={color=cv2v4,},](v2)(v4)
\Edge[lw=0.05cm,style={color=cv2v6,},](v2)(v6)
\Edge[lw=0.05cm,style={color=cv2v7,},](v2)(v7)
\Edge[lw=0.05cm,style={color=cv3v5,},](v3)(v5)
\Edge[lw=0.05cm,style={color=cv3v6,},](v3)(v6)
\Edge[lw=0.05cm,style={color=cv3v7,},](v3)(v7)
	\end{tikzpicture}
	\begin{tikzpicture}[scale=0.85]

	\node[label=center:\(\),fill=ududff,style={minimum size=5,shape=circle}] (v0) at (2.9511,3.9498) {};
	\node[label=center:\(\),fill=ududff,style={minimum size=5}] (v1) at (4.3115,1.4668) {};
	\node[label=center:\(\),fill=ududff,style={minimum size=5,shape=circle}] (v2) at (0,1.12) {};
	\node[label=center:\(\),fill=ududff,style={minimum size=5,shape=circle}] (v3) at (3.1,5) {};
	\node[label=center:\(\),fill=ududff,style={minimum size=5,shape=circle}] (v4) at (2,0) {};
	\node[label=center:\(\),fill=ududff,style={minimum size=5,shape=circle}] (v5) at (5,4.05) {};
	\node[label=center:\(\),fill=ududff,style={minimum size=5,shape=circle}] (v6) at (1,3.45) {};
	\node[label=center:\(\),fill=ududff,style={minimum size=5,shape=circle}] (v7) at (4.4,3.15) {};

	\Edge[lw=0.05cm](v0)(v3)
	\Edge[lw=0.05cm](v0)(v5)
	\Edge[lw=0.05cm,](v0)(v6)
	\Edge[lw=0.05cm](v0)(v7)
	\Edge[lw=0.05cm](v1)(v4)
	\Edge[lw=0.05cm](v1)(v5)
	\Edge[lw=0.05cm](v1)(v7)
	\Edge[lw=0.05cm](v2)(v4)
	\Edge[lw=0.05cm](v2)(v6)
	\Edge[lw=0.05cm](v3)(v5)
	\Edge[lw=0.05cm](v3)(v6)
	\Edge[lw=0.05cm](v3)(v7)
	\Edge[lw=0.05cm](v5)(v7)
	\end{tikzpicture}
	\begin{tikzpicture}[scale=0.85]

	\definecolor{cv0v3}{rgb}{0.0,0.0,0.0}
	\definecolor{cv0v5}{rgb}{0.0,0.0,0.0}
	\definecolor{cv0v6}{rgb}{0.0,0.0,0.0}
	\definecolor{cv0v7}{rgb}{0.0,0.0,0.0}
	\definecolor{cv1v4}{rgb}{0.0,0.0,0.0}
	\definecolor{cv1v5}{rgb}{0.0,0.0,0.0}
	\definecolor{cv2v4}{rgb}{0.0,0.0,0.0}
	\definecolor{cv2v6}{rgb}{0.0,0.0,0.0}
	\definecolor{cv3v5}{rgb}{0.0,0.0,0.0}
	\definecolor{cv3v6}{rgb}{0.0,0.0,0.0}
	\definecolor{cv3v7}{rgb}{0.0,0.0,0.0}
	\definecolor{cv5v7}{rgb}{0.0,0.0,0.0}
	\definecolor{cv6v7}{rgb}{0.0,0.0,0.0}
	%
%	\Vertex[style={minimum size=1.0cm,draw=cv0,fill=cfv0,text=clv0,shape=circle},LabelOut=false,L=\hbox{$0$},x=3.9694cm,y=5.0cm]{v0}
%	\Vertex[style={minimum size=1.0cm,draw=cv1,fill=cfv1,text=clv1,shape=circle},LabelOut=false,L=\hbox{$1$},x=0.0cm,y=2.1763cm]{v1}
%	\Vertex[style={minimum size=1.0cm,draw=cv2,fill=cfv2,text=clv2,shape=circle},LabelOut=false,L=\hbox{$2$},x=3.6415cm,y=0.2403cm]{v2}
%	\Vertex[style={minimum size=1.0cm,draw=cv3,fill=cfv3,text=clv3,shape=circle},LabelOut=false,L=\hbox{$3$},x=3.6324cm,y=3.8451cm]{v3}
%	\Vertex[style={minimum size=1.0cm,draw=cv4,fill=cfv4,text=clv4,shape=circle},LabelOut=false,L=\hbox{$4$},x=0.8417cm,y=0.0cm]{v4}
%	\Vertex[style={minimum size=1.0cm,draw=cv5,fill=cfv5,text=clv5,shape=circle},LabelOut=false,L=\hbox{$5$},x=1.9816cm,y=4.231cm]{v5}
%	\Vertex[style={minimum size=1.0cm,draw=cv6,fill=cfv6,text=clv6,shape=circle},LabelOut=false,L=\hbox{$6$},x=4.7032cm,y=2.5969cm]{v6}
%	\Vertex[style={minimum size=1.0cm,draw=cv7,fill=cfv7,text=clv7,shape=circle},LabelOut=false,L=\hbox{$7$},x=5.0cm,y=4.2992cm]{v7}
	
	\node[label=center:\(\),fill=ududff,style={minimum size=5,shape=circle}] (v0) at (3.8,5.2) {};
	\node[label=center:\(\),fill=ududff,style={minimum size=5}] (v1) at (0,2) {};
	\node[label=center:\(\),fill=ududff,style={minimum size=5,shape=circle}] (v2) at (3.6,0) {};
	\node[label=center:\(\),fill=ududff,style={minimum size=5,shape=circle}] (v3) at (3.6,3.84) {};
	\node[label=center:\(\),fill=ududff,style={minimum size=5,shape=circle}] (v4) at (1,0) {};
	\node[label=center:\(\),fill=ududff,style={minimum size=5,shape=circle}] (v5) at (1.5,4.4) {};
	\node[label=center:\(\),fill=ududff,style={minimum size=5,shape=circle}] (v6) at (5,2.5) {};
	\node[label=center:\(\),fill=ududff,style={minimum size=5,shape=circle}] (v7) at (5,4.4) {};

	\Edge[lw=0.05cm,style={color=cv0v3,},](v0)(v3)
\Edge[lw=0.05cm,style={color=cv0v5,},](v0)(v5)
\Edge[lw=0.05cm,style={color=cv0v6,},](v0)(v6)
\Edge[lw=0.05cm,style={color=cv0v7,},](v0)(v7)
\Edge[lw=0.05cm,style={color=cv1v4,},](v1)(v4)
\Edge[lw=0.05cm,style={color=cv1v5,},](v1)(v5)
\Edge[lw=0.05cm,style={color=cv2v4,},](v2)(v4)
\Edge[lw=0.05cm,style={color=cv2v6,},](v2)(v6)
\Edge[lw=0.05cm,style={color=cv3v5,},](v3)(v5)
\Edge[lw=0.05cm,style={color=cv3v6,},](v3)(v6)
\Edge[lw=0.05cm,style={color=cv3v7,},](v3)(v7)
\Edge[lw=0.05cm,style={color=cv5v7,},](v5)(v7)
\Edge[lw=0.05cm,style={color=cv6v7,},](v6)(v7)

	\end{tikzpicture}

	\caption{The three extremal graphs for $r=3$ and $n=8$: $Q_3, G_{8,3,2}$ and $ G_{8,3,1}$ }
	\label{fig:extremaln8r3}
\end{figure}

In the digraph setting, the total distance $W(D)$ of a digraph $D$ equals the sum of all distances between all ordered pairs of vertices. The outradius of $D$ is equal to the smallest value $r$ such that there exists a vertex $x$ for which $d(x,v)\le r$ for every vertex $v$ of the digraph $D$.

In this setting, when the outradius $r=1$, the extremal digraphs are obviously bidirected cliques, their total distance being $2\binom{n}{2}.$
When $r=2$, the extremal digraphs are bidirected cliques missing $n$ edges, one starting in every vertex, with the restriction that no $n-1$ of those missing edges end in the same vertex. In this case the total distance $W(D)=n^2.$
For $r\ge 3$, we propose the digraph analog to Conjecture~\ref{conjchen}. The definition of $D_{n,r,s}$ being stated in Section~\ref{not&def}. Figure~\ref{fig:Dnrs} shows $D_{n,r,s}$ for $r=3.$

\begin{conj}\label{conjchendigraph}
	Let $n$ and $r$ be two positive integers with $n \ge 2r$ and $r \ge 3$. For any digraph $D$ or order $n$ with outradius $r$, $W(D) \ge W(D_{n,r,1})$. Equality holds if and only if $D \cong D_{n,r,s}$ for $1 \le s \le \frac{n-2r+2}{2}$.
\end{conj}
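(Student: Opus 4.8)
The plan is to reduce the directed statement to the undirected Conjecture~\ref{conjchen} by comparing $D$ with its underlying undirected graph. First I would dispose of the degenerate digraphs: if $D$ is not strongly connected then some ordered pair has $d(u,v)=\infty$, so $W(D)=\infty>W(D_{n,r,1})$ and there is nothing to prove. Hence I may assume $D$ is strongly connected, so every pairwise distance is finite. Write $\widehat{G}$ for the underlying undirected graph of $D$. Since every directed walk is also an undirected walk, $d_D(u,v)\ge d_{\widehat G}(u,v)$ for every ordered pair, and summing over all ordered pairs gives the basic inequality
\begin{equation}\label{basic}
W(D)=\sum_{(u,v)}d_D(u,v)\ge\sum_{(u,v)}d_{\widehat G}(u,v)=2\,W(\widehat G).
\end{equation}
Forgetting orientations can only shrink distances, so $\rad(\widehat G)\le r$; this is the only way the outradius hypothesis enters so far, and it leaves two cases according to whether $\rad(\widehat G)$ equals $r$ or is strictly smaller.

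The main case is $\rad(\widehat G)=r$. Here I would invoke the undirected result for the same pair $(n,r)$: $W(\widehat G)\ge W(G_{n,r,1})$, and since $D_{n,r,1}$ is the bidirected graph of $G_{n,r,1}$ one has $W(D_{n,r,1})=2\,W(G_{n,r,1})$ (this also reproduces the known values $W=n^2$ for $r=2$). Combined with \eqref{basic} this yields $W(D)\ge 2\,W(G_{n,r,1})=W(D_{n,r,1})$. For the equality characterization I would analyse when both inequalities are tight: $W(D)=W(D_{n,r,1})$ forces equality in \eqref{basic}, hence $d_D(u,v)=d_{\widehat G}(u,v)$ for every ordered pair, so for each edge $uv$ of $\widehat G$ both arcs $u\to v$ and $v\to u$ are present and $D$ is exactly the bidirected graph of $\widehat G$; tightness of $W(\widehat G)\ge W(G_{n,r,1})$ then forces $\widehat G\cong G_{n,r,s}$, and bidirecting gives $D\cong D_{n,r,s}$. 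Conversely, bidirecting any radius-$r$ graph produces an outradius-$r$ digraph with $W=2W(G)$, so the directed statement for $(n,r)$ in turn implies the undirected one; thus the two problems are \emph{equivalent} for each fixed $(n,r)$.

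The remaining case $\rad(\widehat G)\le r-1$ is where \eqref{basic} is too weak and the directedness must be used honestly: a graph of smaller radius can be far denser, so $2W(\widehat G)$ may drop well below $W(D_{n,r,1})$. The point to exploit is that lifting the outradius from at most $r-1$ in $\widehat G$ to exactly $r$ in $D$ forces the deletion of many reverse arcs, each of which increases some directed distances. I would control this through the out-transmissions $T^+(u)=\sum_v d_D(u,v)$: the outradius condition says $\ecc^+(u)\ge r$ for every $u$, so each vertex lies at the end of a directed chain of length $r$, and I would show that once $\rad(\widehat G)<r$ the total excess $\sum_{(u,v)}\big(d_D(u,v)-d_{\widehat G}(u,v)\big)$ already exceeds the density gain $2\big(W(G_{n,r,1})-W(\widehat G)\big)$, so that $W(D)>W(D_{n,r,1})$ strictly and no extremal digraph arises here.

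I expect this low-radius case, together with the passage to \emph{all} $n\ge 2r$, to be the main obstacle. The equivalence above shows that the bound and the equality characterization hold for exactly the same orders $n$ in the directed and undirected settings, so the inequality $W(D)\ge W(D_{n,r,1})$ is within reach for every $n\ge 2r$ provided one first establishes the undirected inequality for all $n\ge 2r$ (which, unlike the equality part, has no known small exceptions). The equality characterization, however, is inherited verbatim from the undirected side, and that side has genuine small exceptions: the graph $Q_3$ at $(n,r)=(8,3)$ bidirects to a digraph with $W=2\cdot 48=W(D_{8,3,1})$ that is not of the form $D_{n,r,s}$. Hence the equality statement cannot literally hold for \emph{all} $n\ge 2r$; its sharp form is ``for all $n$ above the same threshold $n_1(r)$ as in Theorem~\ref{main}'', with the residual small orders reduced to a finite check mirroring the undirected one. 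Making the excess estimate in the low-radius case quantitative enough to beat the density gain uniformly in $n$ is the step I anticipate to be the most delicate.
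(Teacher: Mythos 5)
Your reduction collapses at its foundation: the identity $W(D_{n,r,1})=2\,W(G_{n,r,1})$ is false, because $D_{n,r,s}$ is \emph{not} the bidirected version of $G_{n,r,s}$. By the paper's definition of $D_{2r,r,1}$ there is an arc $v_i\to v_j$ whenever $j\le i+1$, so for example $v_3\to v_1$ is present while $v_1\to v_3$ is not; the construction is genuinely asymmetric, and its underlying undirected graph contains dominating vertices ($v_1$ and $w_1$), hence has radius $1$, not $r$. Numerically, Equations~(\ref{eq:1}) and~(\ref{eq:2}) give
$2W(G_{n,r,s})-W(D_{n,r,s})=(r-1)^2(n-2r)+4\binom{r}{3}>0$,
a gap growing linearly in $n$. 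Consequently, in your main case $\rad(\widehat G)=r$ your chain only yields $W(D)\ge 2W(G_{n,r,1})>W(D_{n,r,1})$, which is \emph{strict}: no extremal digraph can occur there, and the equality characterization you propose to inherit from the undirected theorem is vacuous. Worse, every conjectured extremal digraph $D_{n,r,s}$ lies in your residual case $\rad(\widehat G)<r$ --- precisely the case in which you intend to prove strict inequality, so that intended claim is false as well. Your sanity checks fail for the same reason: bidirecting $Q_3$ gives Wiener index $96$, whereas $W(D_{8,3,1})=84$, so it is not a counterexample to the digraph equality statement, and the directed and undirected problems are \emph{not} equivalent for fixed $(n,r)$ once $r\ge 3$ (the coincidence at $r=2$ is special, because there all distances are at most $2$).

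The phenomenon you miss is that for $r\ge 3$ asymmetry is an advantage for the minimizer: a vertex can be at out-distance up to $r$ from the vertices on one directed path while all return distances are $1$, so the optimal digraphs beat every bidirected graph by $\Theta_r(n)$. This is why the paper cannot reduce to the undirected case and instead redoes the argument asymmetrically: Lemmas~\ref{lem:g1} and~\ref{lem:g2} produce a vertex in a large bidirected clique whose deletion preserves distances and the outradius, Proposition~\ref{proplem} bounds the asymmetric quantity $\sum_{u}\left(d(v,u)-1\right)+\sum_{i}\left(d(u_i,v)-1\right)$ below by $(r-1)^2$ --- note that in-distances are only summed along one directed path, exactly reflecting the asymmetry of the extremal structure --- and Lemma~\ref{D2rr1core} pins the equality case to $D_{2r,r,1}$. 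Note also that the paper establishes only the asymptotic form of this conjecture (Theorem~\ref{maindi}, for $n\ge n_1(r)$); any full proof for all $n\ge 2r$ would be new, and your argument does not supply one even for large $n$.
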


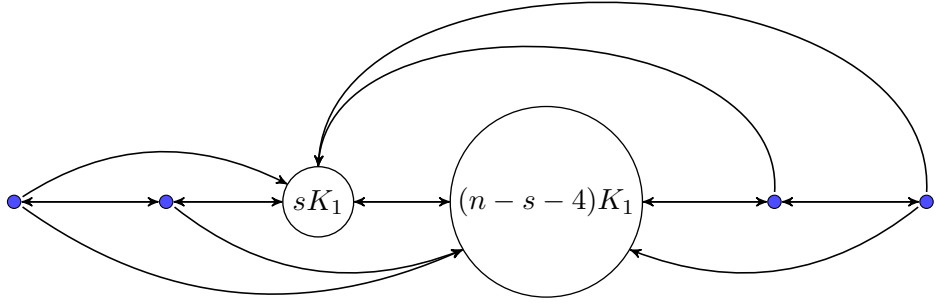
\begin{figure}[h]
	\centering

	\begin{tikzpicture}
	\definecolor{cv0}{rgb}{0.0,0.0,0.0}
	\definecolor{c}{rgb}{1.0,1.0,1.0}
	
	\node[label=north:\(\),fill=ududff,style={minimum size=5}] (v1) at (13,0) {};
	\node[label=north:\(\),fill=ududff,style={minimum size=5}] (v2) at (15,0) {};
	\node[label=north:\(\),fill=ududff,style={minimum size=5}] (w1) at (5,0) {};
	\node[label=north:\(\),fill=ududff,style={minimum size=5}] (w2) at (3,0) {};

%	\Vertex[L=\hbox{$v_2$},x=13,y=0]{v1}
%	\Vertex[L=\hbox{$v_3$},x=15,y=0]{v2}
%
%	\Vertex[L=\hbox{$w_2$},x=5,y=0]{w1}
%	\Vertex[L=\hbox{$w_3$},x=3,y=0]{w2}

	\Vertex[L=\hbox{$sK_1$},x=7,y=0]{w0}	
	\Vertex[L=\hbox{$(n-s-4)K_1$},x=10cm,y=0.0cm]{v0}
	
	\Edge[lw=0.1cm,style={post, right}](v0)(v1)
	\Edge[lw=0.1cm,style={post, right}](v1)(v2)
	\Edge[lw=0.1cm,style={post, right}](w0)(v0)
	\Edge[lw=0.1cm,style={post, right}](v1)(v0)
	\Edge[lw=0.1cm,style={post, right}](v2)(v1)
	\Edge[lw=0.1cm,style={post, right}](v0)(w0)
	\Edge[lw=0.1cm,style={post, right}](w1)(w2)
	\Edge[lw=0.1cm,style={post, right}](w0)(w1)
	\Edge[lw=0.1cm,style={post, right}](w1)(w0)
	\Edge[lw=0.1cm,style={post, right}](w2)(w1)

	\Edge[lw=0.1cm,style={post, bend left}](w2)(w0)
	\Edge[lw=0.1cm,style={post, bend right}](w2)(v0)
	%\Edge[lw=0.1cm,style={post, bend right}](w1)(w0)
	\Edge[lw=0.1cm,style={post, bend right}](w1)(v0)
	
		\Edge[lw=0.1cm,style={post, bend right=90}](v2)(w0)
	\Edge[lw=0.1cm,style={post, bend left}](v2)(v0)
	%\Edge[lw=0.1cm,style={post, bend right}](w1)(w0)
	\Edge[lw=0.1cm,style={post, bend right=90}](v1)(w0)

	\end{tikzpicture}

	\caption{The digraph $D_{n,r,s}$ for $r=3$}
	\label{fig:Dnrs}
\end{figure}

Just as in the graph case, like with Conjecture~\ref{conjchen}, for fixed $r$ there may be a few counterexamples for small $n$.
Also Conjecture~\ref{conjchendigraph} is asymptotically true.

\begin{thr}\label{maindi}
	For $r\ge 3$, there exists a value $n_1(r)$ such that for all $n \ge n_1(r)$ the following hold
	\begin{itemize}
		\item for any digraph $D$ or order $n$ with outradius $r$, we have $W(D) \ge W(D_{n,r,1})$. Equality holds if and only if $D \cong D_{n,r,s}$ for $1 \le s \le \frac{n-2r+2}{2}$.
	\end{itemize}
\end{thr}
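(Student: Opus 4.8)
The plan is to transport the entire architecture behind Theorem~\ref{main} to the ordered setting and to run the same induction on $n$ through a vertex-deletion step. The starting point is the identity
\[
W(D) = 2n(n-1) - m(D) + \sum_{k \ge 3}(k-2)\,a_k(D),
\]
where $m(D)$ is the number of arcs and $a_k(D)$ the number of ordered pairs at distance exactly $k$ (I tacitly restrict to strongly connected $D$, as otherwise $W(D)=\infty$). This makes precise the connection advertised in the abstract: minimizing $W(D)$ is, up to the error term $\sum_{k\ge3}(k-2)a_k(D)$, the same as maximizing the number of arcs among digraphs of order $n$ and outradius $r$. I would stress that a naive symmetrization bound $W(D)\ge 2W(G)$ for the underlying graph $G$ is too weak: the outradius of $D$ being $r$ only forces $\rad(G)\le r$, and since $W(G_{n,r',1})$ increases in $r'$, the chain $W(D)\ge 2W(G)\ge 2W(G_{n,\rad(G),1})$ need not recover the target $W(D_{n,r,1})$; moreover $D_{n,r,s}$ carries one-directional shortcut arcs, so the directed extremum is genuinely different and must be treated head-on.

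First I would show that an extremal $D$ has average ordered distance close to $1$, which by the identity forces $a_k(D)$ for $k\ge 3$ to be small and $m(D)$ nearly maximal. This yields the digraph analog of the ``large clique'' phenomenon: all but $O_r(1)$ vertices lie in a bidirected clique, and in fact the $n-O_r(1)$ bulk vertices split into two bidirected blobs attached in the $C_{2r}$-pattern of $D_{n,r,s}$. The quantitative engine is that any vertex failing to be bidirectionally adjacent to the bulk, or any arc configuration that lengthens many dipaths, contributes $\Omega(n)$ to the error term $\sum_{k\ge3}(k-2)a_k(D)$, so that for $n\ge n_1(r)$ such a deviation cannot be compensated by the gain in $m(D)$. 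The outradius being exactly $r$ (not smaller) is what forces a nonempty bounded ``core'' realizing out-eccentricity $r$ from the center, namely the short directed tail of $D_{n,r,s}$.

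With the coarse structure in hand, I would pick a vertex $v$ inside the larger bidirected blob, chosen so that $D\setminus v$ still has order $n-1$ and outradius exactly $r$; induction then gives $W(D\setminus v)\ge W(D_{n-1,r,1})$. The deletion difference splits as
\[
W(D) - W(D\setminus v) = \sum_{u \ne v}\bigl(d(u,v)+d(v,u)\bigr) - \Delta,
\]
where $\Delta\ge 0$ is the total increase in the remaining pairwise distances caused by deleting $v$; for $v$ in a blob one has $\Delta=0$. One then shows $\sum_{u\ne v}\bigl(d(u,v)+d(v,u)\bigr)\ge W(D_{n,r,1})-W(D_{n-1,r,1})$ for a suitable blob vertex, with equality forcing the blow-up structure. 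Combining with the inductive bound yields $W(D)\ge W(D_{n,r,1})$, and tracking the equality cases through the induction forces $D\cong D_{n,r,s}$ for some admissible $s$. The base of the induction is supplied by the structural analysis above, which already pins the extremal digraph down once $n$ exceeds the bounded core size, and where needed one can invoke Plesn\'{i}k's diameter theorem~\cite{P84} as an auxiliary lower bound.

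The hardest part, as in the undirected proof but more delicate here, is the asymmetric bookkeeping. Because $d(u,v)\ne d(v,u)$ and the outradius constraint is one-sided, controlling $\sum_{k\ge3}(k-2)a_k(D)$ and establishing the sharp per-vertex sum $\sum_{u}\bigl(d(u,v)+d(v,u)\bigr)$ together with its equality characterization requires ruling out clever one-directional shortcut configurations that could, a priori, beat the symmetric blow-up by trading a few long distances for many short ones. A second, genuinely directed difficulty is guaranteeing that a deletable vertex keeping the outradius equal to $r$ (rather than letting it drop to $r-1$) always exists; this is exactly where the one-sidedness of reachability, absent in the graph case, must be handled with care.
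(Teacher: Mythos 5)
Your plan transports the right architecture --- it is in fact the same one the paper uses: delete a vertex preserving outradius and all distances, prove a sharp per-vertex excess bound of $(r-1)^2$, and let the equality case force the blow-up structure --- but the two steps you explicitly defer are the entire mathematical content of the paper's proof, and the quantitative shortcut you offer in their place fails as stated. The paper's Proposition~\ref{proplem} is exactly your ``sharp per-vertex sum'': it proves the lower bound in the refined form $\sum_{u \ne v}\left(d(v,u)-1\right)+\sum_{i}\left(d(u_i,v)-1\right)\ge (r-1)^2$ (incoming distances summed only over a shortest out-path), together with the full equality characterization, via a delicate analysis of ``types'' of witness vertices; and the paper's Lemma~\ref{lem:g2} and Lemma~\ref{lem2di} construct, inside a bidirected clique of size $\Omega_r(n)$, a hub set $S$ of size $O_r(1)$ and an exceptional set $T$ of size $O_r(1)$ whose avoidance yields your ``deletable vertex keeping the outradius equal to $r$''. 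You name both of these as the hardest parts and prove neither, so what you have is a correct outline, not a proof.

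The step you do argue --- that extremality forces all but $O_r(1)$ vertices into two bidirected blobs ``attached in the $C_{2r}$-pattern of $D_{n,r,s}$'' --- is both over-claimed and incorrectly justified. From $W(D)=2n(n-1)-m(D)+\sum_{k\ge 3}(k-2)a_k(D)$ and $W(D)\le W(D_{n,r,1})$ you can only conclude that the total deviation budget (missing arcs plus error term) is $O_r(n)$; your stronger assertion that for $n\ge n_1(r)$ a vertex failing to be bidirectionally adjacent to the bulk ``cannot be compensated'' would rule out the conjectured extremal digraphs themselves, since $D_{n,r,s}$ contains $2r-2$ such vertices, each contributing $\Theta(n)$ to the error term. (Note also that distance-$2$ pairs contribute nothing to $\sum_{k\ge3}(k-2)a_k$, so arc configurations that lengthen many dipaths to length $2$ are invisible to your error term and are penalized only through $m(D)$.) What the budget argument legitimately yields is that $O_r(1)$ vertices deviate; identifying that these deviating vertices together with the bulk form exactly the $C_{2r}$ blow-up pattern is the real difficulty, and the paper spends Lemma~\ref{D2rr1core} on precisely this case analysis (e.g., ruling out an extra arc from $w_r$ to $u_r$, which would gain at most $\frac{r(r-1)}{2}$ in total distance while forcing at least $(r-1)^2$ distances to increase). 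Finally, your induction is circular: its base case at order $n_1(r)$ is ``supplied by the structural analysis above'', i.e., by the under-justified claim, whereas the paper needs no base case at all --- it telescopes $W$ against the explicit value in Equation~\ref{eq:2} and lets the first step at which equality occurs in Proposition~\ref{proplem} trigger the structural lemma.
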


The proof for this in Section~\ref{AsProofChendigraph} uses the same main ideas as in the graph case, but turns out to be slightly more difficult as the distance function is not symmetric. In this case we consider an expression which gives information about $W(D) - W(D \backslash v)$ in a more general setting.

We also prove some asymptotically sharp results on the maximum average distance given the radius. Here the behaviour depends on the exact notion of the radius used in the digraph case.
An overview of those results are presented in the Conclusion  in Tables~\ref{table1} and~\ref{table2}.

\subsection{Relation between maximum size and minimum average distance}

In this subsection, we explain the relation between maximizing the size and minimizing the average distance.

When one wants to minimize the average distance, one wants to have lots of small distances, so one may expect the minimum occurs when most distances are equal to one. That is, one can expect that the extremal graphs attaining the minimum average distance are the same as those attaining the maximum size.
When the diameter of the graph or digraph equals $2$, these two extremum problems are obviously equivalent.

Vizing~\cite{VZ67} determined the maximum size of a graph of given order and radius. Through the same arguments, one can check that the graphs $G_{n,r,s}$ are the only extremal graphs without cutvertices having the maximum size when $r\ge 3$.

\begin{thr}[\cite{VZ67}]\label{VZ}
	Let $f(n,r)$ be the maximum of edges in a graph with radius $r$.
	Then $f(n,1)=\binom{n}2, f(n,2)= \left \lfloor \frac{n(n-2)}2 \right \rfloor$ and 
$$f(n,r)=\frac{(n-2r)^2+5n-6r}{2} \mbox{ when } n \ge 2r \ge 6.$$
Equality occurs if and only if $G$ is a complete graph when $r=1$ or a complete graph minus a maximum matching (and an additional edge covering the remaining vertex when $n$ is odd) if $r=2.$
When $r \ge 3$ and $G$ has no cutvertex, equality occurs if and only if $G$ is isomorphic to $G_{n,r,s}$ for some $1 \le s \le \frac{n-2r+2}{2}$.
\end{thr}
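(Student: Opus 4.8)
The cases $r\in\{1,2\}$ I would dispatch directly by passing to the complement. Radius $1$ forces a universal vertex, so the densest example is $K_n$ and $f(n,1)=\binom n2$. For radius $2$ there is no universal vertex, hence $\overline G$ has minimum degree at least $1$ and therefore $e(\overline G)\ge\lceil n/2\rceil$, with equality exactly when $\overline G$ is a matching (together with one extra edge covering the leftover vertex when $n$ is odd); this gives $f(n,2)=\binom n2-\lceil n/2\rceil=\lfloor n(n-2)/2\rfloor$ and the stated extremal graphs, since deleting a matching from $K_n$ leaves diameter (hence radius) exactly $2$. The substantive case is $r\ge 3$, and here my plan is to analyse a breadth-first decomposition from a center.

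Fix a vertex $u$ with $\ecc(u)=r$ and set $V_i=\{v:d(u,v)=i\}$, $n_i=|V_i|$, so that $n_0=1$, each $n_i\ge 1$, and $\sum_{i=0}^r n_i=n$. Since no edge joins $V_i$ to $V_j$ with $|i-j|\ge 2$, we have the crude bound $e(G)\le\sum_i\binom{n_i}{2}+\sum_i n_in_{i+1}$. This estimate is not tight: its maximum over admissible profiles is attained by concentrating almost all vertices in a single layer, giving roughly $\binom{n-r}{2}$, which exceeds the target $\binom{n-2r+2}{2}+(n-1)=f(n,r)$. Closing this gap is the whole game, and it must come from the requirement that \emph{every} vertex, not only $u$, has eccentricity at least $r$. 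Concretely I would show that the mass can be concentrated into (essentially) two consecutive layers inducing a clique $K_{n-2r+2}$, while the remaining $2r-2$ vertices are forced onto two internally disjoint thin strands — the two arcs of a $C_{2r}$ — each a geodesic of length about $r$ meeting at the antipode of the dense part; these strands exist precisely to supply, for each vertex of the clique, a witness at distance $r$, and they are the source of the additional non-edges. Optimizing the admissible profiles subject to these eccentricity constraints should then yield $e(G)\le\binom{n-2r+2}{2}+(n-1)=f(n,r)$, the optimum being exactly the profile of $G_{n,r,s}$.

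For the characterization I would track equality through every inequality. Equality forces the two large layers to induce a complete graph, forces the strands to be geodesics with no edges between them, and forces them to meet only at the antipode; this pins $G$ down to a $C_{2r}$ with two adjacent vertices blown up into cliques, that is, $G\cong G_{n,r,s}$. The hypothesis that $G$ has no cut-vertex is what lets this run cleanly: it excludes degenerate near-optimal competitors such as a clique with a pendant strand attached at a single vertex, which are either disqualified by the cut-vertex itself or lose edges relative to $G_{n,r,s}$, so that $2$-connectivity is exactly the condition under which the extremal graph is unique.

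The \emph{main obstacle} is the second paragraph: quantitatively matching the eccentricity-forced non-edges to the exact value $f(n,r)$, equivalently showing the number of missing edges is exactly $(2r-3)(n-r)$. The difficulty is that the extremal graph is wildly irregular — about $n$ vertices of degree about $n$ in the clique and about $2r$ vertices of degree $2$ on the strands — so neither a uniform degree bound nor the bare layering inequality is tight; the missing edges have to be accounted for in two separate pools, those absent at the dense vertices and those absent along the strands, and shown to sum to the right constant. Getting the ``two strands, each of length about $r$'' phenomenon to come out with the correct coefficient, rather than a single strand of length $r$ (which would be denser but would break $2$-connectivity and radius), is the crux; an induction-friendly alternative of deleting one well-chosen clique vertex and invoking $f(n-1,r)$ meets the same obstruction, namely certifying that the deletion preserves the radius at exactly $r$.
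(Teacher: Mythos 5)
Your treatment of the cases $r\in\{1,2\}$ via the complement is correct and complete. But for $r\ge 3$ --- which is the entire content of the theorem --- your proposal has a genuine gap, and you concede it yourself. After setting up the BFS layering from a center and correctly observing that the layer bound $e(G)\le\sum_i\binom{n_i}{2}+\sum_i n_in_{i+1}$ overshoots the target by roughly $(r-2)n$, everything that follows is a description of the answer rather than an argument: ``I would show that the mass can be concentrated into two consecutive layers\ldots'', ``optimizing the admissible profiles \ldots should then yield\ldots''. The step that converts the hypothesis that \emph{every} vertex has eccentricity at least $r$ into exactly $(2r-3)(n-r)$ forced non-edges \emph{is} Vizing's theorem; asserting that the optimization ``should'' land on the profile of $G_{n,r,s}$ assumes the conclusion. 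Note also that no simple local count can close the gap: eccentricity at least $r$ gives only $\deg(v)\le n-r$ for each vertex $v$, hence only about $(r-1)n/2$ missing edges in total, which falls short of the required $(2r-3)(n-r)$ for every $r\ge 3$, so the argument must be genuinely global --- and this is exactly the part you never supply. The equality characterization inherits the same problem, since ``tracking equality through every inequality'' presupposes a chain of inequalities that was never established.

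For comparison: the paper itself offers no proof of this statement. It is quoted from Vizing's 1967 paper \cite{VZ67}, and the only addition the author makes is the remark that ``through the same arguments'' one can check that the graphs $G_{n,r,s}$ are the unique extremal graphs without cutvertices when $r\ge 3$. So the benchmark is Vizing's original argument, and your proposal does not reconstruct it. What you do have is a correct reduction of the problem to its known hard core, together with a correct verification of the arithmetic (namely $f(n,r)=\binom{n-2r+2}{2}+(n-1)$, equivalently $(2r-3)(n-r)$ missing edges, which is indeed the edge count of $G_{n,r,s}$), but that is a restatement of the goal, not a proof.
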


So we see the extremal graphs are exactly the same as the ones conjectured by Chen et al~\cite{Chen}.
The graph $Q_3$ is an example showing that the family of extremal graphs are not exactly the same, as its size ($12$) is strictly smaller than the size of $D_{8,3,1}$ ($13$), but the total distance is the same ($48$).
It may be a possibility that the set of extremal graphs for one problem is a subset of the set of extremal graphs for the other problem.

The digraph version of Vizing's result for biconnected digraphs is trivial when $r \in \{1,2\}.$
When $r \ge 3$ and $n\ge 2r$, we conjecture that the extremal graphs are exactly those of the form $D_{n,r,s}$ for some $1 \le s \le \frac{n-2r+2}{2}$.

\begin{conj}\label{VZdi}
	Let $r \ge 3$ and $n \ge 2r$. Then the maximum size of biconnected digraphs with order $n$ and outradius $r$ is attained by $D_{n,r,1}.$ Furthermore the extremal digraphs are exactly the ones of the form $ D_{n,r,s}$ for some $1 \le s \le \frac{n-2r+2}{2}$.
\end{conj}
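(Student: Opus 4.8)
The plan is to mirror the proof of Vizing's undirected bound (Theorem~\ref{VZ}), replacing the distance-from-the-center layering by the out-distance layering from an out-central vertex, and then to isolate the extra information coming from the fact that the outradius is \emph{exactly} $r$ rather than merely at most $r$.

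First I would settle the easy half: that $D_{n,r,s}$ is a legitimate extremal candidate. Concretely, I would check that each $D_{n,r,s}$ is biconnected, that one of its hub vertices $x$ satisfies $\ecc^+(x)=r$ while every vertex has out-eccentricity at least $r$ (so the outradius is exactly $r$), and I would count its arcs and verify that this count is independent of $s$ in the stated range. For $r=3$ a direct count gives $(n-2)^2$; in general the bulk is $(n-2r+2)(n-2r+1)$, coming from the bidirected clique on the two hubs $W_0\cup V_0$ (whose total size $s+(n-2r+2-s)=n-2r+2$ is manifestly independent of $s$), together with a term linear in $n$ contributed by the $2(r-1)$ arm vertices, whose arcs also sum to something depending only on $s+(n-2r+2-s)$. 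This $s$-independence is exactly what makes the whole family $\{D_{n,r,s}\}$ extremal simultaneously, as in Theorem~\ref{VZ}.

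For the upper bound, let $D$ be biconnected with outradius $r$ and choose $x$ with $\ecc^+(x)=r$. Partition $V$ into out-layers $L_0=\{x\},L_1,\dots,L_r$ by out-distance from $x$, with $n_i=|L_i|\ge 1$. The only restriction coming directly from $x$ is that an arc may never jump forward by two or more layers, so the number of arcs is at most $n(n-1)-\sum_{j\ge i+2} n_i n_j$. The difficulty is that this bound alone is \emph{not} minimized by the layer profile of $D_{n,r,s}$: for $r\ge 4$, concentrating almost all vertices into $L_1$ strictly decreases the number of forbidden forward arcs. The point that must be exploited is therefore the lower bound on the outradius, namely that \emph{every} vertex $u$, not only $x$, has out-eccentricity at least $r$, i.e.\ there is a $z$ with $d(u,z)\ge r$. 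I would convert this into additional forbidden arcs by tracing a would-be short out-path from such a $u$: a fat out-layer close to $x$ together with many arcs forces numerous vertices to out-dominate $D$ in fewer than $r$ steps, contradicting $\ecc^+(u)\ge r$. Making this quantitative — showing that any profile other than $(1,\,n-2r+2,\,2,\dots,2,\,1)$ either violates some $\ecc^+(u)\ge r$ or already misses at least as many arcs as $D_{n,r,s}$ — is the technical core, and I expect it to require a second, reverse layering toward a vertex realizing a deepest out-distance from $u$.

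Biconnectivity plays the same role here as the no-cutvertex hypothesis in Theorem~\ref{VZ}: it excludes the degenerate maximizers (such as a long directed in-path attached at a single vertex) that could otherwise tie or beat $D_{n,r,s}$ without being of that form, and it is what pins the characterization down to the family $D_{n,r,s}$. Once the optimal profile $(1,n-2r+2,2,\dots,2,1)$ is forced, I would finish with the usual rigidity step: under this profile and the requirement that all out-eccentricities equal $r$, show that every arc absent from $D_{n,r,s}$ would shorten some out-eccentricity below $r$, so that the arc-maximal biconnected digraphs with this profile are exactly the $D_{n,r,s}$. The main obstacle, to be explicit, is the third step: the radius condition is a two-sided min--max constraint (unlike the diameter-$2$ setting, where maximum size and minimum total distance are literally equivalent), so the hard part is proving that one cannot trade the thin outer layers of $D_{n,r,s}$ for a fat $L_1$ — that is, that maximizing forward reachability is globally incompatible with keeping every out-eccentricity at least $r$.
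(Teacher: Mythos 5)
Your outline stalls exactly where the real difficulty lies, and you say so yourself: the step ``showing that any profile other than $(1,\,n-2r+2,\,2,\dots,2,\,1)$ either violates some $\ecc^+(u)\ge r$ or already misses at least as many arcs as $D_{n,r,s}$'' is announced as the technical core and then deferred to an unspecified ``second, reverse layering.'' That step is not a routine verification; it is essentially the whole content of the statement. To see how far the layering bound is from sufficient, take $r=3$ with profile $(1,n-4,2,1)$: your bound $n(n-1)-\sum_{j\ge i+2}n_in_j$ gives $(n-1)^2$, while the true maximum is $(n-2)^2$, so the eccentricity conditions must account for $2n-3$ further missing arcs, and the proposal contains no mechanism that produces them. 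Even the pointwise consequence of $\ecc^+(v)\ge r$ (namely $d^+(v)\le n-3$ when $r=3$) only gives $n(n-3)$, which is still too weak; the paper's $r=3$ proof needs a genuine case analysis on the vertices of outdegree $n-3$ and their sets of non-out-neighbours $F(v)=V\backslash N^+[v]$. Since the rigidity step (pinning down $D_{n,r,s}$ among arc-maximal digraphs with the right profile) also rests on the missing core, the characterization half is incomplete as well.

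It is also worth being clear that the statement you are attacking is a conjecture that the paper itself resolves only partially, and by a different route that deliberately avoids your global profile optimization. For $r=3$ (all $n\ge 6$) the paper argues locally around the few vertices of maximum outdegree; for $r\ge 4$ and $n$ large it combines a removable-vertex induction (Lemma~\ref{lem2di_size}: one can delete a vertex of a large bidirected clique without changing distances or the outradius) with a rigid degree bound (Proposition~\ref{proplem_size}: every vertex has total degree at most $2(n-1)-(2r-3)$, with equality only when the vertex sits at the head of two disjoint directed paths of the prescribed lengths), yielding Theorem~\ref{mainVz_bicDi}. The trade-off you identify --- a fat $L_1$ gaining forward arcs versus the requirement that every vertex still have out-eccentricity at least $r$ --- is precisely what remains open for $r\ge 4$ and moderate $n$, so a proof along your lines would need to supply a new quantitative argument there, not merely a reorganization of known steps.
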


We note that in Theorem $5$ of \cite{F}, the maximum size of a digraph with out-radius $r$ has been determined in the case the digraph need not be biconnected.
In Section~\ref{sec:Vizing_digraph} we will prove Conjecture~\ref{VZdi} for $r=3$, as well as the case where $r \ge 4$ and $n$ is large enough.
Due to Theorem~\ref{main} and Theorem~\ref{maindi} we see that the extremal graphs agree for large $n$.

When one is working with the diameter instead of radius, the results are known and one can see similar relationships, when comparing the extremal (di)graphs attaining the minimum average distance (Plesn\'{i}k~\cite{P84}) and minimum size (Ore~\cite{Ore68}). The set of (di)graphs attaining the minimum average distance being a subset of the set of (di)graphs attaining the maximum size in this case.

The extremal (di)graphs in the latter case are formed by
taking two blow-ups at $2$ consecutive non-end vertices of a path of length $d$ (graph case) or the sum of a transitive tournament on $d+1$ vertices and the unique longest path in its complement (digraph case).
To get the extremal (di)graphs in the former case one needs to take blow-ups only at the 2 central vertices (or one central vertex).

\section{Notation and definitions}\label{not&def}

A graph will be denoted by $G=(V,E)$ and 
a digraph will be denoted by $D=(V,A).$
The order $\lvert V \rvert$ will be denoted by $n$. % and the size $\lvert E \rvert$ or $\lvert A \rvert$ will be denoted by $m$.
A clique or bidirected clique on $n$ vertices will be denoted by $K_n$. 
A cycle or directed cycle of length $k$ will be denoted by $C_k$.
The clique number of a graph $G$, $\omega(G)$, is the order of the largest clique which is a subgraph of $G.$
The complement $G^c$ of graph $G=(V,E)$ is the graph with vertex set $V$ and edge set $E^c=\binom{V}{2} \backslash E.$ 
The complement $D^c$ of a digraph $D$ is defined similarly, where the set of directed edges is het complement with respect to the edges of a bidirected clique.

The degree of a vertex in a graph $\deg(v)$ equals the number of neighbours of the vertex $v$, i.e. $\deg(v)=\lvert N(v) \rvert.$
In a digraph, we denote with $N^-(v)$ and $N^+(v)$ the (open) in- and outneighbourhood of a vertex $v$.
The indegree $\deg^-$ and outdegree $\deg^+$ of a vertex $v$, equals the number of arrows ending in or starting from the vertex $v$, i.e. $\deg^+(v)=\lvert N^+(v) \rvert $ and  $\deg^-(v)=\lvert N^-(v) \rvert $.
The total degree $\deg$ of a vertex $v$ in a digraph is the sum of the in- and outdegree, i.e. $\deg(v)=\deg^+(v)+\deg^-(v).$

Let $d(u, v)$ denote the distance between vertices $u$ and $v$ in a graph $G$ or digraph $D$, i.e. the number of edges or arrows in a shortest path from $u$ to $v$.  
The eccentricity of a vertex $v$ in a graph equals $\ecc(v)=d(v,V)=\max_{u \in V} d(v,u).$
The radius and diameter of a graph on vertex set $V$ are respectively equal to $\min_{v \in V} \ecc(v)$ and $\max_{v \in V} \ecc(v)=\max_{u,v \in V} d(u,v).$

In the case of digraphs, the distance function between vertices is not symmetric and so there is a difference between the inner- and outer eccentricity 
$\ecc^- (v)=d(V,v)=\max_{u \in V} d(u,v)$ and $\ecc^+(v)=d(v,V)=\max_{u \in V} d(v,u).$
We use the conventions as in e.g. \cite{JG}.
The in- and outradius of a digraph $D$ are defined by 
$\rad^{-}(D)= \min\{d(V,x)\mid x \in V\}$ and $\rad^{+}(D)= \min\{d(x,V)\mid x \in V\}$.
The radius of a digraph $D$ is defined as $\rad(D)=\min\{ \frac{d(x,V)+ d(V,x)}2 \mid x \in V \}.$ Sometimes authors refer to the outradius as radius, as outradius is the most common one between those three definitions.

The total distance, also called the Wiener index, of a graph $G$ equals the sum of distances between all unordered pairs of vertices, i.e. $W(G)=\sum_{\{u,v\} \subset V} d(u,v).$ 
The average distance of a graph is $\mu(G)=\frac{W(G)}{\binom{n}{2}}$. %is related to the Wiener index of the graph.
The Wiener index of a digraph equals the sum of distances between all ordered pairs of vertices, i.e. $W(D)=\sum_{(u,v) \in V^2} d(u,v).$
The average distance of the digraph is $\mu(D)=\frac{W(D)}{n^2-n}.$
A digraph is called biconnected if $d(u,v)$ is finite for any $2$ vertices $u$ and $v$. 

The statement $f(x)=O(g(x))$ as $x \to \infty$ implies that there exist fixed constants $x_0, M>0$, such that for all $x \ge x_0$ we have $\lvert f(x) \rvert \le M \lvert g(x) \rvert .$
Analogously, $f(x)=\Omega(g(x))$ as $x \to \infty$ implies that there exist fixed constants $x_0, M>0$, such that for all $x \ge x_0$ we have $\lvert f(x) \rvert \ge M \lvert g(x) \rvert.$
If $f(x)=\Omega(g(x))$ and $f(x)=O(g(x))$ as $x \to \infty$, then one uses $f(x)=\Theta(g(x))$ as $x \to \infty$. Sometimes we do not write the "as $x \to \infty$" if the context is clear.

\begin{defi}
	Given a graph $G$ and a vertex $v$, the blow-up of a vertex $v$ of a graph $G$ by a graph $H$ is constructed as follows.
	Take $G \backslash v$ and connect all initial neighbours of $v$ with all vertices of a copy of $H.$	
	When taking the blow-up of a vertex $v$ of a digraph $D$ by a digraph $H$, a directed edge between a vertex $w$ of $D \backslash v$ and a vertex $z$ of $H$ is drawn if and only if initially there was a directed edge between $w$ and $v$ in the same direction.
\end{defi}

Let $G_{n,r,s}$, where $n \ge 2r$ and $1 \le s \le \frac{n-2r+2}{2}$, be the graph obtained by taking two blow-ups of two consecutive vertices in a cycle $C_{2r}$ by cliques $K_s$ and $K_{n-2r+2-s}$ respectively.

Let $D_{2r,r,1}$ be a digraph with $2r$ vertices $v_1,v_2, \ldots v_r$ and $w_1, w_2, \ldots, w_r$, such that there are directed edges from $v_i$ to $v_j$ and from $w_i$ to $w_j$ if and only if $j \le i+1$ and a directed edge from any $v_i$ to $w_1$ and from any $w_i$ to $v_1.$

Let $D_{n,r,s}$, $n \ge 2r$ and $1 \le s \le \frac{n-2r+2}{2}$, be the graph obtained by taking the blow-up of $v_1$ by a bidirected clique $K_s$ and a blow-up of $w_1$ by a bidirected clique $K_{n-2r+2-s}$.

\section{Conjecture~\ref{conjchen} for large order}\label{AsProofChen}

We start with the calculation of the total distance of the graphs $G_{n,r,s}$ and $D_{n,r,s}$, which we want to prove to be extremal.

\begin{equation}\label{eq:1}
W(G_{n,r,s})=\binom{n}{2}+(r-1)^2n -r(r-1)^2
\end{equation}

\begin{equation}\label{eq:2}
W(D_{n,r,s})=2\binom{n}{2}+(r-1)^2 n-4 \binom{r}{3}.
\end{equation}

\begin{lem}\label{lem1}
	%Let $r \ge 3$. 
	Suppose $G$ is a graph with order $n$ and total distance $W(G) < \binom{n}{2}+an$, for some constant $a$.
	Then $\omega(G) \ge \frac{n}{8a},$ i.e. $G$ contains a clique of order at least $\frac{n}{8a}$. Furthermore there exists such a clique such that all its vertices have degree at least equal to $n-4a.$
	
\end{lem}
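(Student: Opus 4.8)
The plan is to reduce the statement to a sparseness property of the complement $G^c$ and then extract a clique by a greedy colouring argument. First I would obtain a lower bound on $W(G)$ purely in terms of the number of edges $m=\lvert E\rvert$: every pair at distance exactly $1$ contributes $1$, while every pair at distance at least $2$ contributes at least $2$, so
\[
W(G)\ge m+2\left(\binom{n}{2}-m\right)=2\binom{n}{2}-m.
\]
Combining this with the hypothesis $W(G)<\binom{n}{2}+an$ gives $\binom{n}{2}-m<an$, i.e. the complement $G^c$ has fewer than $an$ edges.

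Second, I would translate this sparseness into a degree statement. Since $\sum_{v}\deg_{G^c}(v)=2\lvert E(G^c)\rvert<2an$, a Markov-type count shows that the set
\[
S=\{\,v\in V : \deg_{G^c}(v)<4a\,\}
\]
has size $\lvert S\rvert>n/2$, because at most $n/2$ vertices can have $G^c$-degree at least $4a$ (if $k$ vertices had $G^c$-degree $\ge 4a$ then $4ak<2an$, forcing $k<n/2$). Every vertex of $S$ satisfies $\deg_G(v)=(n-1)-\deg_{G^c}(v)\ge n-4a$, which already provides the required high-degree vertices; it remains to find the clique inside $S$.

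Third, I would pass to the induced subgraph $G^c[S]$. Each of its vertices has degree at most that in all of $G^c$, hence strictly less than $4a$, so $G^c[S]$ has maximum degree at most $4a-1$ and is therefore properly colourable with at most $4a$ colours by the greedy algorithm. A largest colour class is an independent set of $G^c[S]$ of size at least $\lvert S\rvert/(4a)>n/(8a)$, and an independent set of $G^c$ is exactly a clique of $G$. All its vertices lie in $S$, so they have degree at least $n-4a$ in $G$, giving both conclusions simultaneously, and in particular $\omega(G)\ge n/(8a)$.

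The only delicate point is that the clique and the degree bound must hold for the \emph{same} set of vertices; this is precisely why I first restrict to the high-degree set $S$ and only then extract the clique, rather than bounding $\omega(G)$ directly from the edge count. The remaining issues are cosmetic: when $a$ is not an integer the inequalities $\deg_{G^c}(v)<4a$ and $\deg_G(v)\ge n-4a$ should be read with the appropriate floors, and one checks that the strict inequality $\lvert S\rvert>n/2$ survives, so the stated bound $n/(8a)$ is attained.
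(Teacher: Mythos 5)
Your proof is correct and follows essentially the same route as the paper: a Markov-type count showing more than half the vertices have degree at least $n-4a$ (the paper bounds the distance sum directly, you bound the number of complement edges, which is the same estimate), followed by a greedy extraction of a clique of size at least $\lvert S\rvert/(4a)>n/(8a)$ inside that high-degree set. The only difference is cosmetic: the paper grows the clique by repeatedly intersecting neighbourhoods, while you greedily colour $G^c[S]$ and take the largest colour class; both arguments are the standard ``maximum degree $\Delta$ gives an independent set of size $n/(\Delta+1)$'' bound in disguise.
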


\begin{proof}
	Let $S$ be the set of vertices of degree at least $n-4a.$
	Note that $\lvert S \rvert \ge \frac n2$ since otherwise $2W(G) \ge 2\binom{n}{2}+\frac{n}{2}4a.$
	Now the following algorithm returns a set $T$ of at least $\frac{n}{8a}$ vertices in $S$ which form a clique, since every time $T$ increases by one, $U$ decreases by at most $4a.$
\begin{lstlisting}
Start with $T=\emptyset$ and $U=S.$
While $U$ is nonempty, do:
	Take arbitrary $v \in U$ and set $T$:=$T \cup \{v\}$ and $U$:=$U \cap N(v)$
return $T$
\end{lstlisting}\end{proof}

\begin{lem}\label{lem2}
	Let $r \ge 3$.
	There is a value $n_0(r)$ such that for any $n \ge n_0$ and any graph $G$ of order $n$ and radius $r$ with minimal total distance among such graphs, there is a vertex $v \in G$ such that $G \backslash v$ has radius $r$ and the distance between any $2$ vertices of $G \backslash v$ equals the distance between them in $G.$
\end{lem}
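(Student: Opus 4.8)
The plan is to exploit the extremality of $G$ to find a large dense clique and then delete a generic vertex of it. Since $G$ minimises the total distance among graphs of order $n$ and radius $r$, equation~\eqref{eq:1} gives
\[
W(G)\le W(G_{n,r,1})=\binom n2+(r-1)^2n-r(r-1)^2<\binom n2+(r-1)^2n,
\]
so Lemma~\ref{lem1} with $a=(r-1)^2$ yields a clique $T$ with $|T|\ge \frac{n}{8(r-1)^2}$, all of whose vertices have degree at least $n-4(r-1)^2$; in particular $|T|=\Omega(n)$ while each vertex of $T$ has at most $4(r-1)^2$ non-neighbours. First I record that everything far from $T$ is negligible: for $F=\{x:d(x,T)\ge 2\}$, each $x\in F$ is at distance $\ge 2$ from all of $T$ and so yields $|T|$ pairs of distance $\ge 2$, while $W(G)-\binom n2=\sum_{\{u,w\}}(d(u,w)-1)$ bounds the number of such pairs by $(r-1)^2n$; hence $|F|\le 8(r-1)^4$. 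The same counting shows the peripheral set $P=\{x:|N(x)\cap T|<\tfrac{3}{4}|T|\}$ has size at most $16(r-1)^2$. Thus $F$ and $P$ are bounded in terms of $r$, and it suffices to find a single suitable $v\in T$.

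Next I reduce deletability to two local conditions on $v\in T$. For the distances, a short rerouting argument shows that $G\backslash v$ preserves all distances if and only if $v$ is not the unique common neighbour of any non-adjacent pair $\{a,b\}$: if some shortest path uses $v$ with path-neighbours $a,b$, then $a\not\sim b$ and any second common neighbour of $a,b$ gives a detour of equal length avoiding $v$, while conversely a unique common neighbour $v$ forces $d_{G\backslash v}(a,b)>2$. Call $v$ \emph{pinched} if it is such a unique common neighbour. For the radius, $v\in T$ is within distance $2<r$ of every vertex outside $F$, so $v$ can be a farthest vertex only for the vertices of $F$; consequently $\rad(G\backslash v)=r$ can fail for at most a number of $v$ bounded in terms of $r$ (those that are the unique distance-$\ge r$ witness of some vertex, together with $v$ if it happens to be the unique centre), and for every other non-pinched $v$ a surviving centre shows $\rad(G\backslash v)=r$. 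So it remains to produce a vertex of $T$ that is neither pinched nor radius-critical.

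If $v$ is pinched by $\{a,b\}$ then no clique vertex other than $v$ is adjacent to both $a$ and $b$, so $T\setminus\{v\}$ is covered by the clique-non-neighbourhoods of $a$ and $b$; since each of these has size less than $\tfrac{1}{4}|T|$ when the endpoint lies outside $P$, at least one of $a,b$—say $a$—must lie in $P$. Pinches with both endpoints in $P$ come from the boundedly many pairs inside $P$ and hence forbid only boundedly many clique vertices. The delicate case, which I expect to be the main obstacle, is a pinch with $a\in P$ but $b$ adjacent to more than $\tfrac{3}{4}$ of $T$: a priori the few peripheral vertices could have many clique neighbours, each of which might be pinched. My plan is to eliminate this case through extremality. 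If $\{a,b\}$ is such a pinch, adding the edge $ab$ strictly decreases $W$, so by minimality of $G$ it must push the radius below $r$; this forces $\{a,b\}$ to be \emph{radius-critical}, i.e.\ the unique shortcut repairing the eccentricity of some vertex of eccentricity $r$. Because $n$ is large the radius-$r$ behaviour is carried robustly by the bounded set $F$, and I would argue that for each fixed $a\in P$ only boundedly many partners $b$ can be radius-critical, so only a number of clique vertices bounded in terms of $r$ are pinched in this case as well. Combining the three bounds, the total number of forbidden vertices of $T$ is bounded in terms of $r$, hence smaller than $|T|$ once $n\ge n_0(r)$; any remaining $v\in T$ is the desired vertex, as $G\backslash v$ then has radius $r$ and the same pairwise distances as $G$.
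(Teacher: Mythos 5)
Your overall architecture is sound and close in spirit to the paper's: find the large clique of high-degree vertices via Lemma~\ref{lem1}, show that only boundedly many clique vertices can be ``bad'' for distance preservation or for the radius, and pick any remaining vertex. Your reduction of deletability to the two local conditions (pinching, and being the unique distance-$\ge r$ witness of some vertex of $F$, plus possibly the unique centre) is correct, as are the bounds on $F$, on the radius-bad vertices, and on pinches with both ends in $P$ (the constant you state for $\lvert P \rvert$ is off, but harmless).

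The gap is exactly where you flag it, and your proposed repair does not work. The claim ``for each fixed $a\in P$ only boundedly many partners $b$ can be radius-critical'' is false, and it fails already for the conjectured extremal graph itself. In $G_{n,r,1}$, write the cycle as $c_1c_2\ldots c_{2r}$ with $c_1$ blown up into the large clique $A$, and take $a=c_3$. For \emph{every} $b\in A$ the pair $\{a,b\}$ is non-adjacent with unique common neighbour $c_2$, and adding the edge $ab$ drops $\ecc(c_3)$ (hence the radius) to $r-1$, since $d(b,c_{r+3})=r-2$ via $c_{2r}$. So a single $a\in P$ has $\Theta(n)$ radius-critical pinching partners, and no argument of the form ``extremality forces radius-criticality, radius-criticality forces boundedness'' can succeed. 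What saves you is that the quantity you actually need to bound is the number of \emph{distinct pinched clique vertices} per $a$, not the number of partners: in the example all those partners pinch the same vertex $c_2$. And this bound follows from information you already have but never use, namely that every vertex of $T$ has degree at least $n-4(r-1)^2$. Indeed, if $v_1,\ldots,v_m\in T$ are distinct and $v_i$ is the unique common neighbour of $\{a,b_i\}$, then the $b_i$ are distinct, and $b_i\not\sim v_j$ for $i\ne j$ (otherwise $v_j$ would be a second common neighbour of $\{a,b_i\}$); hence each $v_j$ has at least $m-1$ non-neighbours, forcing $m\le 4(r-1)^2$. Summing over $a\in P$ then bounds all case-(ii) (and case-(i)) pinched vertices by $O\bigl((r-1)^6\bigr)$, which completes your proof. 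For comparison, the paper sidesteps this case analysis entirely: it builds, around one clique vertex $x$ of degree $\ge n-4a$, an explicit set $S$ of fewer than $16a^2$ clique vertices such that every distance-two pair with a common neighbour in the clique has one in $S$; then every clique vertex outside $S$ is automatically unpinched, and only the radius-witness count remains.
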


\begin{proof}
	Note that such a graph $G$ satisfies $W(G)<\binom{n}{2}+a n$, where $a:=a(r)=(r-1)^2$ due to the example $G_{n,r,s}$ and Equation~\ref{eq:1}.
	Let $n_0:=n_0(r)= 192a^3$. By Lemma~\ref{lem1}, we know that $G$ contains a clique $K_k$ with $k \ge \frac{n}{8a}=24a^2$, such that the degree of all its vertices is at least $n-4a.$
	We can take a subset $S$ of vertices of $K_k$ of size at most $ 16a^2$ such that for any $2$ vertices $v,w$ in $G \backslash K_k$ which are connected with a common neighbour in $K_k$ have a common neighbour in $S$ and any vertex $v$ in $G \backslash K_k$ with a neighbour in $K_k$ has a neighbour in $S.$

	Taking a first vertex $x$ in $K_k$, it has degree at least $n-4a$ and hence it is connected to all vertices except for $s \le 4a-1$ vertices $v_1, v_2, \ldots, v_s$ in $G \backslash K_k.$
	For every of the $s$ vertices $v_i$ which has a neighbour $x_i$ in $K_k$, add a single $x_i$ to $S.$
	For every $v_i$, $1 \le i \le s$, there are at most $4a-1$ vertices which are not neighbours of $x_i$. For any such one which has a common neighbour in $K_k$ with $v_i$, add the common neighbour to $S$.
	We end with a set $S$ of size at most $4a(4a-1)+1<16a^2$ which satisfies the properties.

	Now for any vertex $v \in K_k \backslash S$, the distance measure in $G \backslash v$ equals the restriction to $G \backslash v$ of the distance measure in $G.$
	The radius of $G \backslash v$ is at least $r-1$, with equality if and only if there is at least one vertex $w_v$ in $G \backslash K_k$ such that its eccentricity as a vertex in $G \backslash v$ equals $r-1$ and $d_G(v,w_v)=r$.
	Let $T$ be the set of all vertices $v \in K_k \backslash S$ for which this happens. 
	Note that
	$$an>W(G)-\binom{n}{2} \ge \sum_{u \in K_k, v \in T} \left( d(u,w_v)-1 \right) \ge \lvert T \rvert k \mbox{, so }\lvert T \rvert < 8a^2.$$
	Since $k \ge 24a^2$, we can choose a vertex $v \in K_k \backslash (S \cup T).$ \end{proof}

\begin{lem}\label{lem3}
	Let $G$ be a graph with radius $r$ and order $n$ such that there is some vertex $v \in G$ such that $G \backslash v$ has also radius $r$ and the same restricted distance function.
	Then $W(G) \ge W(G \backslash v)+n-1+(r-1)^2    .$
	Equality occurs if and only if there is a path $\Q=w_{r}w_{r-1}\ldots w_1 u_1 u_2 \ldots u_r$ as subgraph of $G$, where $v=w_1$, such that $d_G(w_r,u_1)=d_G(w_1,u_r)=r$ and $d(v,w)=1$ for every vertex $w \in G$ which is not on this path.
\end{lem}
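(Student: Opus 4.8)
The plan is to reduce the statement to a lower bound on the \emph{transmission} of $v$, and then to realise that bound by exhibiting two long shortest paths leaving $v$ in nearly opposite directions.

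Since by hypothesis $d_{G\setminus v}(x,y)=d_G(x,y)$ for all $x,y\neq v$, splitting the pairs counted by $W(G)$ according to whether they contain $v$ gives at once $W(G)-W(G\setminus v)=\sum_{w\neq v} d_G(v,w)=:T(v)$. Hence the claim is equivalent to the excess bound $\sum_{w\neq v}\bigl(d_G(v,w)-1\bigr)\ge (r-1)^2$. The conjectured extremal configuration dictates the right mechanism: if $P$ and $Q$ are two shortest paths out of $v$ meeting only at $v$, of lengths $\ell_P,\ell_Q$, then their non-root vertices occupy pairwise distinct distances $1,\dots,\ell_P$ and $1,\dots,\ell_Q$ from $v$, and so contribute $\binom{\ell_P}{2}+\binom{\ell_Q}{2}$ to the excess; for $\ell_P=r$ and $\ell_Q=r-1$ this is exactly $(r-1)^2$. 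So it suffices to produce two internally disjoint shortest paths from $v$ of lengths at least $r$ and at least $r-1$, equivalently a shortest path through $v$ with at least $r$ vertices on one side and at least $r-1$ on the other.

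The first arm is free: as $\rad(G)=r$ we have $\ecc_G(v)\ge r$, so there is a vertex $u$ with $d_G(v,u)=\ecc_G(v)=:e\ge r$ and a shortest $v$–$u$ path $P$. If $e$ is large this single arm already suffices, since $\binom{e}{2}\ge (r-1)^2$ as soon as $e$ is bounded below by roughly $\sqrt2\,(r-1)$; so we may assume $e$ is close to $r$. The second arm is where the hypothesis $\rad(G\setminus v)=r$ enters. In the extreme case $e=r$, I look at a neighbour $p_1$ of $v$ on $P$: since $\ecc_{G\setminus v}(p_1)\ge r$ there is a vertex $y\neq v$ with $d_G(p_1,y)=d_{G\setminus v}(p_1,y)\ge r$, and because every vertex $p_i$ of $P$ satisfies $d_G(p_1,p_i)=i-1\le e-1<r$, the vertex $y$ lies off $P$ with $d_G(v,y)\ge d_G(p_1,y)-1\ge r-1$. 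This $y$ is the tip of the second arm, and the intermediate range $r<e<\sqrt2\,(r-1)$ is treated in the same spirit.

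The main obstacle is turning $u$ and $y$ into two \emph{sufficiently disjoint} arms. Fixing shortest paths $P$ (to $u$) and $Q$ (to $y$) and letting $p$ be the depth of their last common vertex, a Gromov-product bookkeeping over the distinct vertices of $P\cup Q$ gives excess at least $\binom{e}{2}+\binom{f}{2}-\binom{p}{2}$ with $f=d_G(v,y)$. If $Q$ avoids $p_1$ (so $p=0$) the arms are disjoint and the bound $\binom{r}{2}+\binom{r-1}{2}=(r-1)^2$ is immediate; if instead $Q$ passes through $p_1$, then routing from $p_1$ along $Q$ forces $r\le d_G(p_1,y)\le f-1$, so $y$ is strictly deeper ($f\ge r+1$), which pays back the overlap. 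Making this trade-off quantitative, ensuring the chosen paths do not re-merge after diverging, and settling the borderline overlaps for small $r$, is the delicate part, where I expect a short case analysis on $e$ and on the branch depth $p$ to be unavoidable. For the equality statement I then trace the inequalities back: equality forces $e=r$, two internally disjoint arms of lengths exactly $r$ and $r-1$, and every remaining vertex to contribute nothing, i.e.\ to lie at distance $1$ from $v$. Writing the arms as $v=w_1,u_1,\dots,u_r$ and $v=w_1,w_2,\dots,w_r$ reconstructs the path $\Q=w_r\cdots w_1u_1\cdots u_r$, and the conditions $d_G(w_1,u_r)=r$ and $d_G(w_r,u_1)=r$ record exactly that each arm is geodesic and that there is no shortcut between the two arms — precisely the tightness conditions extracted along the way.
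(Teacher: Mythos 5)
Your overall strategy coincides with the paper's own proof: reduce the claim to the transmission excess bound $\sum_{w\neq v}\bigl(d_G(v,w)-1\bigr)\ge (r-1)^2$, build one arm from $\ecc_G(v)$, build a second arm from the eccentricity of a vertex near $v$ on the first arm, control the overlap of the two arms, and trace equality back through the inequalities. Your treatment of the extreme case $e:=\ecc_G(v)=r$ is correct, and in fact simpler than you make it: your ``Case B'' is vacuous, since if the two shortest paths shared a vertex $x$ at depth $p\ge 1$ then $d_G(p_1,y)\le d_G(p_1,x)+d_G(x,y)\le (p-1)+(f-p)=f-1\le e-1<r$, contradicting $d_G(p_1,y)\ge r$ (here $f\le\ecc_G(v)=r$); so for $e=r$ the arms are automatically internally disjoint and no ``payback'' is needed.

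The genuine gap is the intermediate range $r<e<\sqrt{2}(r-1)$ (the paper's $r<r'<2r-1$), which you defer with ``treated in the same spirit''. It is not the same: once $e\ge r+1$, the endpoint $u$ of $P$ itself satisfies $d_G(p_1,u)=e-1\ge r$, so the witness $y$ for $\ecc(p_1)\ge r$ may lie on $P$ and your construction produces no second arm at all. One must instead take the witness from the vertex of $P$ at depth $e-r+1$ (the paper's $u_{r'-r+1}$), from which every vertex of $P$ is within distance $r-1<r$, and one must then prove a bound on the branch depth (namely $p\le e-r$, which follows from $d_G(v,z)\le\ecc_G(v)=e$) \emph{before} any counting. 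Your bookkeeping bound $\binom{e}{2}+\binom{f}{2}-\binom{p}{2}$ combined with ``$f\ge r+1$ pays back the overlap'' is not sufficient on its own: for instance $e=r$, $f=r+1$, $p=r$ gives $\binom{r+1}{2}<(r-1)^2$ for $r\ge 5$, so without the branch-depth bound the chain of inequalities does not reach $(r-1)^2$; that bound is the crux, not a detail. The paper closes this range with precisely these two ingredients plus an explicit count and a QM-AM balancing step, in essence $\binom{e}{2}+\binom{2r-1-e}{2}\ge(r-1)^2$. Note finally that your equality characterization also leans on the missing range: to conclude that equality forces $e=r$, you need \emph{strict} inequality for all $e>r$, which is exactly the part left unproven.
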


\begin{proof}
	Let the eccentricity of $v$ in $G$ be $r' \ge r.$
	This implies that there exists a path $\P = vu_1u_2 \ldots u_{r'}$ in $G$ which is the shortest path between $u_{r'}$ and $v=u_0$.
	If $r' \ge 2r-1$, then $\sum_{u \in G,u \not=v} d(u,v)-1 \ge \frac{(r'-1)r'}{2}>(r-1)^2$ and the statement is true.
	Since $G$ has radius $r$, there exists a vertex $z$ with $d(u_{r'-r+1},z)=r$.
	Since we now assume $r'<2r-1$, $z$ is not a vertex of $\P.$
	Take a shortest path $\P'$ from $v$ to $z$ and let $u_i$ be the vertex in $\P \cap \P'$ with the largest index. Note that $i<r'-r+1$ since $\ecc(v)=r'$.
	Then $d(u_{r'-r+1},u_i)+d(u_i,z)  	\ge d(u_{r'-r+1},z)=r$, i.e. $d(u_i,z) \ge r-(r'-r+1-i)=2r+i-r'-1$.
	We now get that
	\begin{align*}
	\sum_{u \in G,u \not=v} d(u,v)-1 &\ge \sum_{j=1}^{r'} (j-1) + \sum_{j=i+1}^{2r+i-r'-1} (j-1)\\
	&=\frac{(r'-1)r'}{2}+(2r-r'-1)i+\frac{(2r-r'-2)(2r-r'-1)}{2}\\
	&\ge \frac{r'^2+(2r-r'-1)^2 -2r+1}{2}\\
	&\ge \frac{(r'-1)^2+\left(2r-2-(r'-1)\right)^2 }{2}\\
	&\ge (r-1)^2.
	\end{align*}
	Here we used $2r-1>r'\ge r$, $i \ge 0$ and the inequality between the quadratic and arithmetic mean (QM-AM). Equality occurs if and only if $r'=r$, $i=0$, $d(v,z)=r-1$ and $d(w,v)=1$ for every vertex $w$ which is not part of $\P$ nor of $\P'$. 
\end{proof}

\begin{proof}[Proof of Theorem~\ref{main}]%graph version
	Take $a=a(r)=(r-1)^2$ and $n_1:= n_1(r)=n_0(r)+a(r)n_0(r).$ Let $n \ge n_1.$
	By Lemma~\ref{lem2} we know that there is a sequence of $n-n_0 \ge an_0$ vertices $v_{n_0+1}, \ldots, v_{n}$ which are consecutively added, starting from some graph $H$ with order $n_0$ and radius $r$, such that distances and the radius do not change.
	If no addition of such a vertex gives equality in Lemma~\ref{lem3}, then 
	
\begin{align*}
	W(G) &\ge W(H)+\sum_{i=n_0+1}^{n} \left( i+(r-1)^2 \right)\\
	&> \binom{n_0}{2} + \sum_{i'=n_0}^{n-1} i' +(n-n_0)+(n-n_0)(r-1)^2\\
	&\ge  \binom{n}{2}+an_0+ (n-n_0)a\\
	&= \binom{n}{2}+an
\end{align*}
	which is a contradiction as $G_{n,r,1}$ has a smaller total distance.
	
	Let $v=v_m$ be the first vertex whose addition gives equality in Lemma~\ref{lem3} and let $G_m$ be the final graph at that step.
	We know part of the characterization of this graph $G_m.$
	Furthermore, we note that $u_i$ is not connected with $w_j$ when $j<i$ as otherwise $d(w_1,u_i)=j<i$.
	Similarly, $u_i$ is not connected with $w_j$ when $j>i$ as otherwise $d(u_1,w_j)=i<j$.
	Also vertices $u_i$ and $w_i$ are not connected when $1<i \le r-1$ as otherwise $\ecc(w_i)<r$.
	For any neighbour $w$ of $v=w_1$, it is easy to see that we need $N[w] \cap \Q \subset \{w_3,w_2,w_1,u_1,u_2\}$.
	A small case distinction shows that $N[w] \cap \Q$ is part of one of the sets $\{w_3,w_2,w_1\},\{w_2,w_1,u_1\}, \{w_1,u_1,u_2\}.$
	%As a consequence $d_G(X,Y)= d_{\Q}(X,Y)$ for any $2$ vertices $X,Y$ of $\Q$.
	Also if $2$ vertices $x,y$ of $\Q$ satisfy $N[x] \cap \Q=\{w_3,w_2,w_1\}$ and $N[y] \cap \Q= \{w_1,u_1,u_2\}$, they cannot be connected, since otherwise $\ecc(y)<r.$
	Now $\sum_{x,y \in \Q} d(x,y) \ge W(C_{2r})=r^3$,
	$\sum_{x,y \in G_m \backslash \Q} d(x,y) \ge W(K_{m-2r})$ and
	$\sum_{x\in \Q,y \in G_m \backslash \Q} d(x,y) \ge (m-2r)(r^2+1)$.
	
	Using the previous observations, we conclude that the graph with minimal total distance at this point (i.e. equality in the three above estimates) was some $G_{m,r,s}$ and the minimum attained after having added $v_{m+1},\ldots, v_n$ will give some $G_{n,r,s}.$ 
\end{proof}

\section{Conjecture~\ref{conjchendigraph} for large  order}\label{AsProofChendigraph}

We first prove two lemmas which are formulated more generally.

\begin{lem}\label{lem:g1}
	Let $D$ be a digraph with average total degree at least being equal to $2(n-1)-2t$. Then at least half of the vertices have total degree more than $2(n-1)-4t.$ 
	Also $\omega(D) \ge \frac{n}{8t},$ i.e. $D$ contains a bidirected clique of size at least $\frac{n}{8t}$.
\end{lem}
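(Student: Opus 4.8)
The plan is to transplant the undirected argument of Lemma~\ref{lem1} to the directed setting, reading the total degree of a vertex as a measure of how close that vertex is to being joined in both directions to everything, and then running the very same greedy clique-growing procedure. The one genuinely new ingredient is controlling the number of \emph{bidirectional} non-neighbours, and for this I would introduce the \emph{deficit} $\delta(v):=2(n-1)-\deg(v)\ge 0$, which counts the missing ``half-edges'' at $v$ (a vertex joined to $v$ in neither direction costs $2$, a vertex joined in only one direction costs $1$). The hypothesis on the average total degree rewrites cleanly as $\sum_{v\in V}\delta(v)\le 2tn$.

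For the first claim I would argue by a Markov-type averaging count. Set $S:=\{v:\deg(v)>2(n-1)-4t\}$, i.e.\ the vertices with $\delta(v)<4t$. If we had $|S|<n/2$, then more than $n/2$ vertices would satisfy $\delta(v)\ge 4t$, and since all deficits are non-negative this would force $\sum_{v}\delta(v)>\tfrac{n}{2}\cdot 4t=2tn$, contradicting the displayed bound. Hence $|S|\ge n/2$, which is exactly the assertion that at least half the vertices have total degree more than $2(n-1)-4t$.

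For the bidirected clique the crucial observation is that the number of vertices $w\ne v$ \emph{not} joined to $v$ in both directions is at most $\delta(v)$, because each such $w$ contributes at least $1$ to the deficit. Consequently every $v\in S$ fails to be a two-sided neighbour of fewer than $4t$ vertices. I would then run the greedy procedure of Lemma~\ref{lem1} restricted to $S$: maintain a candidate set $U$ (initially $S$), repeatedly pick a vertex, move it into the clique $T$, and replace $U$ by its intersection with the set of vertices joined to that vertex in both directions. Each iteration deletes the chosen vertex together with its at most $4t-1$ bidirectional non-neighbours, so $|U|$ drops by at most $4t$ per step; starting from $|U|=|S|\ge n/2$ this produces at least $\tfrac{n/2}{4t}=\tfrac{n}{8t}$ iterations. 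The resulting $T$ is a bidirected clique, since each later choice was forced to lie in the two-sided neighbourhood of every earlier one, giving $\omega(D)\ge \tfrac{n}{8t}$.

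The main obstacle — and really the only place where the directed case differs from the undirected one — is precisely this link between ``total degree near its maximum $2(n-1)$'' and ``few two-sided non-neighbours''. Once the inequality \emph{(number of bidirectional non-neighbours)}~$\le\delta(v)$ is in hand, the asymmetry of the arc relation causes no further trouble and the remainder of the argument is verbatim that of Lemma~\ref{lem1}.
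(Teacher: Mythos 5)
Your proof is correct and follows essentially the same route as the paper: a Markov-type averaging argument for the first claim, and then the greedy clique-growing algorithm of Lemma~\ref{lem1} run on $S$ with the neighbourhood replaced by the two-sided neighbourhood $N^+(v)\cap N^-(v)$, which is exactly how the paper adapts it. The only difference is that you spell out the deficit bookkeeping (at most $\delta(v)<4t$ bidirectional non-neighbours per vertex of $S$) that the paper leaves implicit when asserting $U$ shrinks by at most $4t$ per step.
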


\begin{proof}
	The first part is trivial, as the contrary would lead to a contradiction.
	The second part is analogous to the proof of Lemma~\ref{lem1}, with $S$ now being the set of vertices with total degree more than $2(n-1)-4t.$
	In the algorithm, $N(v)=N^+(v) \cap N^-(v),$ will denote the set of vertices $w$ which are both in-neighbours and out-neighbours of $v$.
\end{proof}

\begin{lem}\label{lem:g2}
	Let $D$ be a digraph with outradius $r\ge 3$, order $n$ and size at least $n(n-1-t)$ such that it contains a bidirected clique $K_k$ for which all of its vertices have total degree at least $2(n-1)-4t.$ 
	If $k>32t^2+4t+\frac{tn}k$, there is a vertex $v \in K_k$ such that $D \backslash v$ has outradius $r$ and the distance between any $2$ vertices of $D \backslash v$ equals the distance between them in $D.$
\end{lem}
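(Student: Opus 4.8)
The plan is to mirror the structure of the proof of Lemma~\ref{lem2}, now accounting for the asymmetry of the directed distance. I must guarantee two things about the vertex $v$ I delete: that deleting it changes no pairwise distance, and that it does not lower the outradius. I would handle these by exhibiting a small ``protected'' set $S \subseteq K_k$ outside which every vertex is distance-preserving, together with a small set $T \subseteq K_k$ of vertices whose deletion would drop the outradius, and then showing $|S| + |T| < k$ so that some $v \in K_k \setminus (S \cup T)$ survives and works.

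For distance preservation I would first reduce the problem to rerouting directed paths of length two. If deleting an internal vertex $v$ of a shortest $x$--$y$ path increases $d(x,y)$, then on that path $v$ has a predecessor $p$ and successor $q$ with $p \to v \to q$; since the path is shortest, $p \not\to q$, so $d(p,q)=2$ and $v$ is a \emph{connector} of the ordered pair $(p,q)$. Hence it suffices that $S$ contain, for every ordered pair $(p,q)$ possessing a connector in $K_k$, at least one such connector: deleting any $v \in K_k \setminus S$ then admits a length-two reroute $p \to v' \to q$ with $v' \in S$, $v' \ne v$, and substituting that segment yields a walk of the same length avoiding $v$ (the walk need not be simple, which is harmless since distance is the minimum walk length). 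To build $S$ economically I would start from a single vertex $x \in K_k$, which by the degree hypothesis misses at most $4t$ arcs in total; set $A=\{q: x\not\to q\}$ and $B=\{p: p\not\to x\}$, so $|A|+|B|\le 4t$. The vertex $x$ already connects every pair $(p,q)$ with $p\to x\to q$, so the only uncovered pairs have $q\in A$ or $p\in B$. For each fixed $q\in A$ I would cover the pairs $(\cdot,q)$ by running the greedy clique-style argument of Lemma~\ref{lem1} inside the set of in-neighbours of $q$ in $K_k$: one vertex $u_1$ handles all $p$ with $p\to u_1$, and the at most $4t$ in-non-neighbours of $u_1$ each contribute one further connector, costing $1+4t$ vertices; the pairs with $p\in B$ are handled symmetrically using out-neighbours. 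A routine count then gives $|S| \le 32t^2 + 4t$, and one checks that the remaining pairs with an endpoint inside $K_k$ are covered automatically, since any two vertices of the bidirected clique are mutually adjacent.

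For the outradius I would first note that, once distances are preserved, out-eccentricities cannot increase, so $\rad^+(D\setminus v)\le r$. Moreover it cannot fall below $r-1$: if some $w$ had $\ecc^+_{D\setminus v}(w)\le r-2$ then $d(w,u)\le r-2$ for all $u\in K_k$, whence $d(w,v)\le \min_{u\in K_k}d(w,u)+1\le r-1$, so $w$ already witnessed out-eccentricity $\le r-1$ in $D$, contradicting $\rad^+(D)=r$. Thus the only danger is $\rad^+(D\setminus v)=r-1$, and I collect these $v$ into $T$. For $v\in T$ there is a vertex $w_v$ with $\max_{y\ne v}d(w_v,y)=r-1$ and $d(w_v,v)\ge r$. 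As above $d(w_v,u)\ge d(w_v,v)-1\ge r-1\ge 2$ for every $u\in K_k$, so each ordered pair $(w_v,u)$ with $u\in K_k$ is a non-arc; and $v\mapsto w_v$ is injective, since $w_v=w_{v'}$ with $v\ne v'$ would force $d(w_v,v')\le r-1$ and $d(w_v,v')\ge r$ at once. Therefore the $|T|\,k$ pairs $(w_v,u)$ are distinct non-arcs, and since $D$ has at most $nt$ non-arcs we obtain $|T|\le \tfrac{tn}{k}$. Combining, $|S|+|T|\le 32t^2+4t+\tfrac{tn}{k}<k$ by hypothesis, so some $v\in K_k\setminus(S\cup T)$ exists; being outside $S$ it preserves all distances, and being outside $T$ it keeps the outradius equal to $r$, as required.

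I expect the main obstacle to be the construction and verification of $S$: unlike the undirected case, ``connector'' is a directed, asymmetric notion, so the pairs requiring coverage split into an in-side and an out-side, and I must be careful that the greedy clique argument is applied on the correct (in- or out-) neighbourhood in each case and that no pair with an endpoint inside $K_k$ is overlooked. The outradius bound, by contrast, transfers almost verbatim from Lemma~\ref{lem2} once the injectivity of $v \mapsto w_v$ and the non-arc counting are in place.
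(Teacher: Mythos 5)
Your proposal is correct and follows essentially the same route as the paper's proof: a hub vertex of $K_k$ plus corrective connectors for the at most $4t$ exceptional in/out-non-neighbours yields the distance-preserving set $S$ within the $32t^2+4t$ budget, and the radius-dropping set $T$ is bounded by $\frac{tn}{k}$ exactly as in the paper, via the injectivity of $v\mapsto w_v$ and counting the $\lvert T\rvert k$ missing arcs against the size hypothesis. The one point you pass over quickly --- that $\rad^{+}(D\setminus v)\le r$ needs some out-center of $D$ other than $v$ to survive the deletion --- is treated with exactly the same brevity in the paper itself (which only argues the lower bound $r-1$), so your attempt matches the paper's proof in both structure and level of rigour, and is even more explicit in places (the length-two rerouting reduction and the proof that the outradius cannot drop below $r-1$).
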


\begin{proof}
	We will first construct a set $S$ of vertices of $K_k$ of size at most $32t^2+4t$
	such that for any $2$ vertices $x,y$ in $D \backslash K_k$ for which there exists a vertex $v \in K_k$ such that $\vec{xv}$ and $\vec{vy}$ are edges of $D$, there is an $s \in S$ with $\vec{xs}$ and $\vec{sy}$ being edges of $D$ as well.
	Take a first vertex $s_1$ of $K_k$ and assume $Z$ is the set of vertices $z$ of $D \backslash K_k$ such that there are no edges in both directions between $s_1$ and $z.$ %We know $\lvert Z \rvert \le 4t-1$.
	For any vertex $z \in Z$ that has an edge towards $K_k$, take an additional vertex $s_i \in K_k$ (which we put in $S$) such that there is an edge from $z$ to $s_i$. Similarly for every vertex $z \in Z$ such that there is an edge from $K_k$ to $z$, we take some $s_i \in K_k$ for which there is an edge from $s_i$ to $z$.
	Note at this point $\lvert S \rvert \le 4t.$ 
	Now there are less than $2\cdot 4t \cdot 4t=32t^2$ pairs of vertices $(x,y)$ in $D \backslash K_k$ such that the property is not satisfied by $S$ yet.
	Adding a corresponding vertex of $K_k$ to $S$ gives a set $S$ satisfying the property.
	Now for any vertex $v \in K_k \backslash S$, the distance measure in $D \backslash v$ equals the restriction to $D \backslash v$ of the distance measure in $D.$
	The outradius of $D \backslash v$ is at least $r-1$. In case equality holds, there is at least one vertex $w_v$ in $D \backslash K_k$ such that $d(w_v,v)=r$ and the outer eccentricity of $w_v$ as a vertex in $D \backslash v$  equals $r-1$.
	Let $T$ be the set of all vertices $v \in K_k \backslash S$ for which this happens. 
	Then for every $v \in T$, there is an associated $w_v$ which is not associated with another element from $T$, for which there is no edge from $w_v$ to any element of $K_k$.
	This implies that at least $\lvert T \rvert k$ arrows are missing, which has to be at most $tn$, i.e. $\lvert T \rvert \le \frac{tn}k.$\\
	Since $k\ge 32t^2+4t+\frac{tn}k$, we can choose a vertex $v \in K_k \backslash (S \cup T).$
\end{proof}

\begin{lem}\label{lem2di}
	Let $r \ge 3$.
	There is a value $n_0(r)$ such that for any $n \ge n_0$ and any digraph $D$ of order $n$ and outradius $r$ with minimal total distance among such digraphs, there is a vertex $v \in D$ such that $D \backslash v$ has outradius $r$ and the distance between any $2$ vertices of $D \backslash v$ equals the distance between them in $D.$
\end{lem}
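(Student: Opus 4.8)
The plan is to establish Lemma~\ref{lem2di} as the exact digraph counterpart of Lemma~\ref{lem2}, feeding the two structural lemmas \ref{lem:g1} and \ref{lem:g2} with the right parameters. The essential bridge is to convert the hypothesis that $D$ minimises the total distance into a lower bound on the size $\lvert A \rvert$, so that both the average-total-degree hypothesis of Lemma~\ref{lem:g1} and the size hypothesis of Lemma~\ref{lem:g2} are satisfied with a single choice of the parameter $t$. I would take $t=t(r)=(r-1)^2$ throughout, exactly as $a=(r-1)^2$ was used in the graph case.

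First I would bound the size from below. Since $D$ has minimal total distance among digraphs of order $n$ and outradius $r$, comparison with the candidate $D_{n,r,1}$ together with Equation~\ref{eq:2} gives $W(D)\le 2\binom{n}{2}+(r-1)^2n-4\binom{r}{3}$. On the other hand, every ordered pair of distinct vertices contributes at least $1$ to $W(D)$, and at least $2$ unless it is an arc; counting the $\lvert A\rvert$ arcs separately yields
\[
W(D)\ge \lvert A\rvert + 2\bigl(n(n-1)-\lvert A\rvert\bigr)=2n(n-1)-\lvert A\rvert .
\]
Combining the two inequalities and using $n(n-1)=2\binom{n}{2}$ gives $\lvert A\rvert \ge n(n-1)-(r-1)^2 n+4\binom{r}{3}\ge n(n-1-t)$, which is precisely the size hypothesis of Lemma~\ref{lem:g2}. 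Moreover the sum of all total degrees equals $2\lvert A\rvert$, so the average total degree is at least $2(n-1-t)=2(n-1)-2t$, matching the hypothesis of Lemma~\ref{lem:g1}.

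Next I would extract the clique and close the argument. By Lemma~\ref{lem:g1}, $D$ contains a bidirected clique $K_k$ with $k\ge \frac{n}{8t}$, and (as in its proof, taking the greedily built clique inside the set of high-total-degree vertices) we may choose it so that all of its vertices have total degree at least $2(n-1)-4t$. To invoke Lemma~\ref{lem:g2} we only need $k>32t^2+4t+\frac{tn}{k}$. Since $k\ge \frac{n}{8t}$ we have $\frac{tn}{k}\le 8t^2$, so it suffices that $\frac{n}{8t}>40t^2+4t$, i.e.\ $n>320t^3+32t^2$. Hence setting $n_0(r)=320(r-1)^6+32(r-1)^4+1$ (any slightly larger value also works) guarantees, for all $n\ge n_0$, that the hypotheses of Lemma~\ref{lem:g2} hold; that lemma then produces the desired vertex $v\in K_k$ for which $D\backslash v$ has outradius $r$ and the same restricted distance function.

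The computations above are routine once the set-up is fixed, so the crux is really the bookkeeping in the first step: choosing the single parameter $t=(r-1)^2$ so that the total-distance bound simultaneously feeds the average-degree requirement of Lemma~\ref{lem:g1} and the size requirement of Lemma~\ref{lem:g2}, and then checking that the clique size $k\ge n/(8t)$ guaranteed by Lemma~\ref{lem:g1} is large enough (for $n$ beyond $n_0$) to overcome the threshold $32t^2+4t+tn/k$ of Lemma~\ref{lem:g2}. The only genuinely new ingredient relative to the graph case is the passage from the Wiener index to the arc count via the estimate that every non-arc ordered pair contributes at least $2$, which here plays the role of the bound $W(G)<\binom{n}{2}+an$ used to launch Lemma~\ref{lem2}.
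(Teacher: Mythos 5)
Your proposal is correct and follows essentially the same route as the paper: the same parameter choice $t=a=(r-1)^2$, the same arc-counting bound $W(D)\ge 2n(n-1)-\lvert A\rvert$ to convert minimality of the total distance into the size hypothesis $\lvert A\rvert\ge n(n-1-t)$, and the same application of Lemmas~\ref{lem:g1} and~\ref{lem:g2} with the identical threshold computation $\frac{n}{8t}>40t^2+4t$, yielding the same $n_0(r)=320(r-1)^6+32(r-1)^4$ up to the harmless $+1$.
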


\begin{proof}
	Note that such a graph $D$ satisfies $W(D)<2\binom{n}{2}+a n$, where $a:=a(r)=(r-1)^2$ due to the example $D_{n,r,s}$ and Equation~\ref{eq:2}.
	If the size of the digraph would be smaller than $n(n-1-a)$, then $W(D) \ge n(n-1-a)+ 2an=2\binom{n}{2}+a n$, which is a contradiction. 
	Due to the equivalent of the handshaking lemma, we know the average total degree is at least $2(n-1)-2a$ and hence by Lemma~\ref{lem:g1}  we know that $D$ contains a clique $K_k$ with $k \ge \frac{n}{8a}$, such that the (total) degree of all its vertices is at least $2(n-1)-4a.$
	Let $n_0:=n_0(r)= 8a(40a^2+4a)$.
	Since $k \ge \frac{n}{8a} > 32a^2+4a+\frac{an}{k}\ge 32a^2+4a+8a^2$ when $n>n_0$, the result follows from Lemma~\ref{lem:g2}.
\end{proof}

\begin{prop}\label{proplem}
	Let $D=(V,A)$ be a digraph, $v \in V$ a vertex with outeccentricity $\ecc^+(v)=r' \ge r\ge 3$.
	Let $\P=vu_1u_2\ldots u_{r'}$ be a directed path in $D$ with $d(v,u_{r'})=r'$ such that for every vertex $u_i$, $r'-r+1 \le i \le r'$, there is a vertex $x_i$ with $d(v,x_i)<d(v,u_i)+d(u_i,x_i)$ such that $d(u_i,v)+d(v,x_i)\ge r$.
	Then \begin{equation}\label{Part1}
		\sum _{u \in V, u \not =v}\left(d(v,u)-1\right) + \sum _{1 \le i \le r'} \left(d(u_i,v)-1\right) \ge (r-1)^2 \end{equation} with equality if and only if the following conditions (up to labeling) are satisfied
	\begin{itemize}
		\item $r'=r$,
		\item there is a second directed path $v w_2w_3\ldots w_{r}$ with $d(v,w_{r})=r-1$ which is disjoint from the first directed path up to the vertex $v$,
		\item $d(u_i,v)=1$ for every $1 \le i \le r$, and
		\item  $d(v,u)=1$ for all vertices $u$ not on those two directed paths.	
	\end{itemize}
\end{prop}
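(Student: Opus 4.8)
The plan is to adapt the argument of Lemma~\ref{lem3}, the essential new feature being that the out-distances $d(v,\cdot)$ and the in-distances $d(\cdot,v)$ must be handled separately, since the distance function is no longer symmetric. I would first reduce to the range $r\le r'\le 2r-2$: if $r'\ge 2r-1$, the directed path $\P$ alone gives $\sum_{u\ne v}(d(v,u)-1)\ge\sum_{j=1}^{r'}(j-1)=\tfrac{(r'-1)r'}{2}\ge(r-1)(2r-1)>(r-1)^2$, so inequality~\eqref{Part1} holds strictly and no equality is possible. Hence I may assume $r\le r'\le 2r-2$.

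Now put $i_0:=r'-r+1$, $p:=d(u_{i_0},v)$ and $q:=d(v,x_{i_0})$, so that $p+q\ge r$ by hypothesis. A shortest directed path from $v$ to $x_{i_0}$, say $\Q$, cannot pass through any $u_m$ with $m\ge i_0$: otherwise $d(v,x_{i_0})=m+d(u_m,x_{i_0})\ge i_0+d(u_{i_0},x_{i_0})=d(v,u_{i_0})+d(u_{i_0},x_{i_0})$, contradicting the defining inequality $d(v,x_{i_0})<d(v,u_{i_0})+d(u_{i_0},x_{i_0})$. Thus $\Q$ leaves $\P$ at some $u_j$ with $j\le i_0-1=r'-r$, and its vertices past $u_j$ are genuinely new, lying at out-distances $j+1,\dots,q$. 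Bounding the out-part by $\P$ together with this second path, and the in-part by the single term $d(u_{i_0},v)-1$, reproduces verbatim the estimate of Lemma~\ref{lem3}.

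The main difficulty is that this literal analog is too weak: an adversary may shrink $q$ (hence the quadratic second-path contribution) while inflating $p=d(u_{i_0},v)$, and a lone in-distance term contributes only linearly. The remedy is to exploit the whole window of in-distances. Since $u_{i_0}\to\cdots\to u_i$ is a directed subpath, $d(u_i,v)\ge p-(i-i_0)$, while the hypothesis gives $d(u_i,v)\ge r-d(v,x_i)$ for each $i$ in the window; reusing one witness across many indices then forces the in-distances up, whereas using well-separated witnesses forces additional far out-vertices into $\sum_{u\ne v}(d(v,u)-1)$. I would combine $\sum_{i=i_0}^{r'}(d(u_i,v)-1)\ge\sum_{k=0}^{r-1}\bigl(\max(1,p-k)-1\bigr)$ with the second-path contribution and a convexity (QM--AM) estimate, and then minimise over $p,q,j$ subject to $p+q\ge r$, $0\le j\le\min(r'-r,q-1)$ and $r\le r'\le 2r-2$. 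The genuinely delicate point, where I expect the bulk of the work, is the bookkeeping for witnesses $x_i$ that coincide: in the extremal configuration every $x_i$ equals the endpoint of the second path, so their out-contribution must be counted once, not once per index. The minimum should come out to exactly $(r-1)^2$, forced to be attained only when $r'=r$, $j=0$ (the second path disjoint from $\P$ except at $v$), every $d(u_i,v)=1$, and every remaining vertex at out-distance $1$; this is precisely the stated equality structure, the two directed paths realising the two arcs of the underlying cycle $C_{2r}$.
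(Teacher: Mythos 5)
Your scaffolding is sound: the reduction to $r'\le 2r-2$, the observation that a shortest path to $x_{i_0}$ must branch off $\P$ at some $u_j$ with $j\le r'-r$, and the diagnosis that the whole window of in-distances must be exploited are all correct, and they match the architecture of the paper's argument. But the one step you make concrete is demonstrably too weak, and the step you defer (``the bookkeeping for witnesses $x_i$ that coincide'') is not a technical afterthought --- it is the actual content of the proof. Your proposed objective, namely $\binom{r'}{2}$ plus the second-path contribution plus $\sum_{k=0}^{r-1}\bigl(\max(1,p-k)-1\bigr)$, minimized subject to $p+q\ge r$, $0\le j\le\min(r'-r,q-1)$ and $r\le r'\le 2r-2$, uses the witness hypothesis only for the single index $i_0$. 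Its minimum lies strictly below $(r-1)^2$: take $r'=r=4$, $j=0$, $p=q=2$; the objective is $\binom{4}{2}+\binom{2}{2}+1=8<9=(r-1)^2$, and more generally $p=2$, $q=r-2$ gives $r^2-3r+4<(r-1)^2$ for every $r>3$. So no minimization over $(p,q,j,r')$ of that quantity can prove \eqref{Part1}; the constraints supplied by the witnesses $x_i$ with $i>i_0$ must enter the optimization, and your write-up never specifies how they do.

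For comparison, this is precisely what the paper's case analysis accomplishes. After an exchange argument it may be assumed that every witness either lies on $\P$ itself (i.e.\ $x_i=u_{i-1}$, forcing $d(u_i,v)\ge r-i+1$) or is reached by a path disjoint from $\P$ except at $v$; letting $s-1$ be the largest out-distance of an off-path witness, every window index then satisfies $d(u_i,v)\ge r-\max(s-1,i-1)$, which in your $r=4$ scenario forces $d(u_2,v)\ge 2$ rather than the value $1$ your relaxation permits. Summing these forced in-distances together with the out-contributions $\binom{r'}{2}+\binom{s-1}{2}$ yields $\binom{r'}{2}+\binom{s-1}{2}+(s+r-r'-1)(r-s)+\binom{r-s+1}{2}$, which is increasing in $r'$ and at $r'=r$ equals $r^2-r-s+1\ge (r-1)^2$, with equality exactly at $s=r$; the all-on-path case is handled separately by QM--AM and gives $r^2-r>(r-1)^2$ strictly, so it never produces equality. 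Your equality characterization rests on the same missing analysis, so it too remains unproved. In short: right skeleton, correct identification of where the difficulty sits, but the load-bearing step is absent, and the concrete substitute you offer for it is false.
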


\begin{proof}
	For any $r'-r+1 \le i \le r'$, we take $x_i$ being equal to a vertex on the path $\P$ if possible (that is when $d(u_i,v)+i-1 \ge r$).
	If this is not possible we take a shortest (directed) path $\Q$ from $v$ to $x_i$ which has the least number of vertices in common with $\P$. Let $\P \cap \Q$ be the directed path $vu_1 \ldots u_j$.
	Then we say $x_i$ is of type $j$.
	First we notice that there will be no $x_i$ of type $j$ for any $j \ge 2$.
	For this, choose the $x_i$ of type $j$($\ge 2$) with $d(u_j,x_i)$ maximal. 
	Note that we can choose $x_k=x_i$ only if $k$ is between $j+1$ and $j+d(u_j,x_i)$ and for no other $k$.
	So if we can adapt $D$ such that $x_i$ is deleted and $d(u_k,v)$ increases by one for every $j+1 \le k \le j+d(u_j,x_i)$, the left hand side (LHS) of expression~\ref{Part1} decreases by $j-1$.
	If we have some $x_i$ of type $1$, we again choose $x_i$ with $d(u_1,x_i)$ maximal. We adapt the path $\Q=vu_1w_3w_4\ldots x_i$ from $v$ to $x_i$ to 
	$v w_2w_3 \ldots x_i$.
	Note that $r'>r$, or $r'=r$ and we can decrease $d(u_1,v)$.\\
	So having no $x_i$ of type $j \ge 1$, we know that there is some directed path $\Q=vw_2w_3 \ldots w_s$ for some $s \le r'$ which is disjoint from $\P$ (except from $v$) or we could take $x_i=u_{i-1}$ in all cases.\\
	In the second case, we have that the LHS of Equation~\ref{Part1} is at least 
	$$\frac{r'(r'-1)}{2}+\frac{(2r-r')(2r-r'-1)}{2}=\frac{(r'^2+(2r-r')^2)}{2}-r \ge r^2-r>(r-1)^2,$$
	where we used the inequality between the quadratic and arithmetic mean (QM-AM).\\
	In the first case, we have that $d(u_i,v)+s-1 \ge r$ when $r'-r+1 \le i \le s-1$ where we can assume $2 \le s \le r$, as $s\ge r+1$ implies the result trivially.
	When $s \le i \le r$, we have $d(u_i,v)+d(v,u_{i-1}) \ge r$.
	So we get that the LHS of Equation~\ref{Part1} is at least 
	\begin{align*}
	&\sum_{i=1}^{r'} \left(d(v,u_i)-1 \right) + \sum_{i=2}^{s} \left(d(v,w_i)-1 \right) + \sum_{i=r'-r+1}^{s-1} \left(d(u_i,v)-1 \right) + \sum_{i=s}^{r} \left(d(u_i,v)-1 \right)\\&\ge 
	\frac{r'(r'-1)}{2}+\frac{(s-1)(s-2)}{2}+(s+r-r'-1)(r-s)+\frac{(r-s)(r-s+1)}{2}.	\end{align*}
	This expression is strictly increasing for $r' \ge r$, so the minimum is attained when $r'=r$. So the expression reduces to $r^2-r-s+1$ which is minimal for $s=r$ and gives exactly $(r-1)^2.$ 
	In case of equality, we also need $d(v,u)=1$ for vertices $u$ not considered (not on the paths), from which the characterization of the equality constraints is clear as well.	
\end{proof}

\begin{lem}\label{D2rr1core}
	Let $D=(V,A)$ be a digraph with outradius $r \ge 3$ containing directed paths 
	$w_1u_1u_2\ldots u_{r}$ and $w_1 w_2w_3\ldots w_{r}$ which are vertex-disjoint up to $w_1$ with $d(w_1,u_r)=r$ and $d(w_1,w_r)=r-1$.
	Let $V_1=\{w_r,w_{r-1} \ldots, w_2,w_1,u_1,u_2,\ldots, u_{r}\}$.
	Assume $d(u,w_1)=1$ for all $u \in V$ and $d(w_1,u)=1$ for all $u \in V \backslash V_1$.
	Then 
	 \begin{equation}\label{Part2}
		\sum _{x,y \in V_1} d(x,y) \ge W(D_{2r,r,1}) \end{equation} with equality if and only if $D[V_1]$ is isomorphic to $D_{2r,r,1}$ and for any $y \in V \backslash V_1$ we have
		\begin{equation}\label{Part3}
		\sum_{x \in V_1} \left(d(x,y)+d(y,x)-2\right) \ge (r-1)^2. \end{equation}

\end{lem}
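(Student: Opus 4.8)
The plan is to compare $D$ pairwise with $D_{2r,r,1}$, using the two geodesics to fix the forward distances and the outradius hypothesis to force the cross distances. I would first identify $V_1$ with the vertex set of $D_{2r,r,1}$ so that $w_1$ is the common ``hub'', the block $w_1,\dots,w_r$ plays the role of the side containing the hub, and $u_1,\dots,u_r$ the opposite side reached through $u_1$. Under this identification the distances of $D_{2r,r,1}$ are: within-block distance $k-i$ forward and $1$ backward, and cross distance $k$ to the $k$-th vertex of the opposite block; their sum is $W(D_{2r,r,1})$, which one can also read off from Equation~(\ref{eq:2}) at $n=2r,\ s=1$.

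Next I would pin down the forced distances in $D$. As $d(w_1,u_r)=r$ and $d(w_1,w_r)=r-1$, both paths are geodesics, so the triangle inequality gives $d(u_i,u_k)=d(w_i,w_k)=k-i$ for $i\le k$, $d(w_1,u_k)=k$ and $d(w_1,w_j)=j-1$; with $d(x,w_1)=1$ these already equal the target values, and the backward within-block pairs satisfy $d\ge 1$, matching the target $1$. I would also note that detours through $V\setminus V_1$ cannot shorten these: from $d(w_1,u_k)=k$, $d(w_1,w_k)=k-1$ and $d(w_1,z)=1$ for $z\notin V_1$ one gets $d(z,u_k)\ge k-1$ and $d(z,w_k)\ge k-2$, so the distances above are genuinely realized inside $V_1$.

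The crux is the cross distances, where the geodesic bound $d(w_j,u_k)\ge k-j+1$ (and symmetrically for $d(u_i,w_k)$) is weaker than the target $k$. Here I would use that $\rad^+(D)=r$ forces $\ecc^+(x)\ge r$ for every $x\in V_1$. The hub-and-geodesic upper bounds show that all out-distances from $x$ stay below $r$, except the cross distances to the one or two deepest vertices of the opposite block; hence the outeccentricity of $x$ can only be witnessed near the far end. For an interior vertex this forces $d(u_i,w_r)=r$ (resp.\ $d(w_j,u_r)=r$), and since $d(u_i,w_r)=r$ together with $d(w_k,w_r)=r-k$ forces $d(u_i,w_k)=k$ for all $k$, every interior cross distance is pinned to its target. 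For the two boundary vertices $u_r,w_r$ a short case analysis is required: if the far cross distance drops below $r$, the relations $d(u_{r-1},w_k)\le 1+d(u_r,w_k)$ and their mirror images show the resulting deficiency is absorbed by a surplus in a backward within-block distance that $\ecc^+\ge r$ forces to be large. Summing over all ordered pairs then yields Inequality~(\ref{Part2}), and demanding every estimate be tight (all cross distances at their targets, all backward edges present, no surplus) reconstructs the edge set of $D_{2r,r,1}$, giving the equality characterization; the converse is the distance computation above. This boundary bookkeeping—balancing the few cross-distance deficiencies against backward-edge surpluses while keeping track of equality—is the step I expect to be the main obstacle, and the asymmetry between the two blocks (only one contains $w_1$) makes the $w$-side the more delicate of the two.

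Finally, for Inequality~(\ref{Part3}) I would run the argument of Proposition~\ref{proplem} with $v=y$. Since $\rad^+(D)=r$ we have $\ecc^+(y)\ge r$, and because $d(y,w_1)=1$ while $d(w_1,z)=1$ for $z\notin V_1$, the vertices farthest from $y$ lie on the two geodesics inside $V_1$, with $d(y,u_k)\ge k-1$ and $d(y,w_k)\ge k-2$. A shortest path realizing $\ecc^+(y)$ therefore enters and stays in $V_1$, and the condition $\ecc^+\ge r$ on its vertices supplies the branch-witnesses demanded by Proposition~\ref{proplem}. As the path and all witnesses lie in $V_1$, the $(r-1)^2$ bound produced by that proposition is carried entirely by distances between $y$ and $V_1$, which is precisely Inequality~(\ref{Part3}).
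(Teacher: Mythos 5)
Your argument for Inequality~(\ref{Part2}) is essentially the paper's: fix the within-block and hub distances from the two geodesics, use $\ecc^+\ge r$ to pin the cross distances for interior vertices, and settle $u_r,w_r$ by a deficiency/surplus count. Two details in your sketch are off, though, and they sit exactly in the step you flagged as the main obstacle. First, for an interior vertex $w_j$ the outradius does \emph{not} force $d(w_j,u_r)=r$: if there is an arc from $w_r$ to $u_r$ then $d(w_j,u_r)\le r-j+1<r$, and the eccentricity of $w_j$ must instead be witnessed by $u_{r-1}$. Second, in that boundary case the compensating surplus is not in backward within-block distances (that mechanism belongs to the other boundary case, where a shortcut $d(u_r,w_j)=j-1$ forces $d(u_r,u_{r-1})=r$); here it comes from the $(r-1)^2$ cross distances $d(w_i,u_j)$, $2\le i\le r$, $1\le j\le r-1$, each forced up to $j+1$ because $d(w_i,u_{r-1})=r$, against a gain of only $r(r-1)/2$ from the arc $w_r u_r$. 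This is precisely the bookkeeping the paper carries out, and your sketch leaves it open.

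The genuine gap is in your derivation of Inequality~(\ref{Part3}). You apply Proposition~\ref{proplem} with $v=y$ and assert that ``the path and all witnesses lie in $V_1$,'' so that the $(r-1)^2$ is carried entirely by distances between $y$ and $V_1$. That assertion is unjustified and false in general. The proposition's lower bound is assembled from forward terms along a geodesic realizing $\ecc^+(y)$, forward terms along a second path constructed inside its proof (over which you have no control), and backward terms $d(\cdot,y)-1$ along the first path; nothing confines these paths to $V_1$. Concretely, suppose $\ecc^+(y)=r$ is witnessed only by $u_r$ while $d(y,u_1)\ge 2$: a length-$r$ geodesic from $y$ to $u_r$ cannot pass through $w_1$ (that costs $r+1$) and may be forced to start $y\to z\to u_2$ with $z\in V\backslash V_1$; one checks such configurations are compatible with all hypotheses of the lemma (give $y$ an arc to $w_2$ so that $d(y,w_r)=r-1$ and $u_r$ is the unique far vertex). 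Then the terms $d(y,z)-1$ and $d(z,y)-1$ are counted in the proposition's bound but do not appear in the sum of (\ref{Part3}), and each can equal $1$ since every vertex of $V\backslash V_1$ is within distance $2$ of $y$ in both directions via $w_1$. Since the proposition only guarantees $(r-1)^2$, any positive leakage breaks the reduction; repairing it would require showing the leakage is always offset by strictness in the proposition, which is extra work your proposal does not contain, and this matters exactly in the critical regime where the forward terms alone fall short by $r-2$. The paper avoids all of this by arguing directly on the restricted sum: the far vertex of $y$ must lie in $\{w_r,u_{r-1},u_r\}$, the easy cases are handled by forward counts, and in the case $d(y,u_r)=r$, $d(y,w_r)=r-2$ it adds the observation that $d(u_i,y)\ge 2$ for all $1\le i\le r-1$ (else $\ecc^+(u_i)<r$), giving $\sum_{x\in V_1}\left(d(x,y)-1\right)\ge r-1$ and closing the bound. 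Your proof needs this restriction-to-$V_1$ analysis or an equivalent substitute; as written it does not establish (\ref{Part3}).
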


\begin{proof}[Proof of (\ref{Part2})]
	First note that for every $u \in V \backslash V_1$ and $v \in V$, we have $d(v,u) \le d(v,w_1)+d(w_1,u)\le 2$ and so $\ecc^+(v)\ge r$ implies there is a $x \in V_1$ with $d(v,x)\ge r$.
	
	By the given conditions, we know that $d(u_i,u_j)=d(w_i,w_j)=j-i$ when $1 \le i \le j \le r$ and $d(w_i, w_j),d(u_i,u_j) \ge 1$ if $j < i$.\\
	We also have $d(u_i,w_j)=j$ for all $1 \le i \le r-1$, $j$ being an upper bound since $d(u_i,w_1)=1$ and $d(w_1,w_j)=j-1$, while $d(u_i,w_j)<j$ for some $j$ would imply $d(u_i,w_r)<r$ and hence $\ecc^+(u_i)<r$, which would be a contradiction.
	
	Next, we see $d(w_i,u_j) \ge j$ for all $1 \le j \le r-1$.
	When $\min\{i,j\}=1$, this is by definition.
	When  $d(w_i,u_j) < j$ for some $1<j<r$ and $1<i$, we would get $\ecc^+(w_i)<r$, since $d(w_i,w_k)<r$ for all $k$, $d(w_i,u_k)\le 1+k<r$ when $k<j$ and $d(w_i,u_k)\le (j-1)+(k-j)=k-1 \le r-1$ when $k \ge j.$
	
	So the digraphs for which Equation~\ref{Part2} is not immediate, satisfy $d(u_r,w_j)<j$ for some $j$ or $d(w_i,u_r)<r$ for some $i$.
	Since $d(w_1,u_r)=r$, $d(w_1,w_i)=i-1$ and $d(w_i,u_{r-1})\ge r-1$, we see that $d(w_i,u_r)<r$ is only possible if there is an arc from $w_r$ to $u_r$.
	The condition $d(u_r,w_j)<j$ for some $j>1$ would imply $d(u_r,w_j)=j-1$ since otherwise $\ecc^+(u_{r-1})<r$ and hence $d(u_r,u_{r-1})=r$ as $\ecc^+(u_r)=r$ and so $d(u_r,u_{i})\ge 1+i$ for all $1 \le i \le r-1.$
	Note this already implies that if $d(w_i,u_r)=r$ for all $i$, Equation~\ref{Part2} is satisfied.
	
	So assume $d(w_r,u_r)=1$ and remark we now need $d(w_i, u_{r-1})=r$ for every $1<i \le r$,
	so $d(w_i,u_j) \ge j+1$ for all $1 \le j \le r-1$ and  $1<i \le r$.
	So at least $(r-1)^2$ distances are at least $1$ larger than the corresponding distances in $D_{2r,r,1}$, while the edge between $w_r$ and $u_r$ made we won only $\frac{r(r-1)}{2}$ with the terms corresponding to $d(w_i,u_r)$ for $2 \le i \le r$.
	Taking into account all possibilities, we see Equation~\ref{Part2} is always true and equality is possible if and only if $D[V_1]$ is isomorphic to $D_{2r,r,1}$.
\end{proof}	
	
\begin{proof}[Proof of (\ref{Part3})]
	Note that $d(y,w_r) \ge r-2$ and $d(y,u_r)\ge r-1$ due to the conditions on $w_1$.
	As $\ecc^+(y)\ge r$, there is at least one vertex at distance $\ge r$, the only possible vertices for this are in $\{w_r,u_{r-1},u_r\}$.
	If $d(y,w_r)=r$, combining this with $d(y,u_r) \ge r-1$ we already have $\sum_{x \in V_1} \left(d(y,x)-1\right) \ge (r-1)^2.$
	The same holds if $d(y,u_r)=r$ and $d(y,w_r)\ge r-1$ or if $d(y,u_{r-1})=r$ as then the shortest path from $y$ to $u_r$ goes by some $w_i$ or $d(y,u_r)=r+1$ and $d(y,w_r) \ge r-2.$
	In the last case $d(y,u_r)=r$ and $d(y,w_r)=r-2$. But this implies $d(u_i,y)\ge 2$ for every $1\le i \le r-1$ as otherwise $\ecc^+(u_i)<r$.
	So now we have
	%\begin{align*}
	$$\sum_{x \in V_1} \left(d(x,y)-1\right) \ge r-1 \mbox{ and }
	\sum_{x \in V_1} \left(d(y,x)-1\right) \ge \frac{r(r-1)}{2}+\frac{(r-2)(r-3)}{2}$$
	%\end{align*}
	 from which the result follows again as the sum is at least $(r-1)^2+1.$
\end{proof}

\begin{proof}[Proof of Theorem~\ref{maindi}]
	
Let $a=a(r)=(r-1)^2$ and $n_1:= n_1(r)=n_0(r)+a(r)n_0(r).$ Let $n \ge n_1.$
By Lemma~\ref{lem2di} we know that there is a sequence of $n-n_0 \ge an_0$ vertices $v_{n_0+1}, \ldots, v_{n}$ which are consecutively added, starting from some digraph $H$ with order $n_0$ and radius $r$, such that distances and the radius do not change.
Note that a digraph of radius $r$ satisfies the conditions of Proposition~\ref{proplem}.
If no addition of any of the $an_0$ vertices gives equality in Proposition~\ref{proplem}, then
\begin{align*}
W(D) &\ge W(H)+\sum_{i=n_0+1}^{n} \left(2i-1+(r-1)^2\right) \\
&> 2\binom{n_0}{2} + 2\sum_{i'=n_0}^{n-1} i' +(n-n_0)+(n-n_0)(r-1)^2\\
&\ge  2\binom{n}{2}+an_0+ (n-n_0)a\\
&= 2\binom{n}{2}+an
\end{align*}
which is a contradiction as $D_{n,r,1}$ had a smaller total distance (Equation~\ref{eq:2}).
So we have equality in some step adding $v_m$ in Proposition~\ref{proplem} and we get a digraph $D_m$ at that step. Knowing the conditions of equality of Proposition~\ref{proplem} and that $D_m$ has outradius $r$, we may apply Lemma~\ref{D2rr1core} to conclude that $D_m$ should be of the form $D_{m,r,s}$ for some $s$ and so does the digraph at the final step.
% we get that $D_{2r,r,1}$ is a subdigraph of $D_m$.
%%% nog voorzichtig ivm 
\end{proof}

\subsection{Minimum for digraphs given order and radius}

For small $r$, we easily can determine the exact minimum Wiener index of digraphs with given order and radius $r$. Note that $r$ can be an integer or a half-integer, i.e. $1 \le r$ with $r \in \frac12 \mathbb Z.$

\begin{prop}
	The minimum Wiener index among all digraphs $D$ with radius $r$ and order $n$ is at least
	$$
	\begin{cases}
	2\binom{n}{2} \hfill \mbox{ if } r=1,\\
	2\binom{n}{2}+\lceil \frac n2 \rceil \hfill \mbox{ if }r=\frac32 \mbox{ or}\\
	n^2 \hfill \mbox{ if }r=2 .\\
	\end{cases}
	$$
	Equality holds if and only if $D=K_n$, $D^c$ is the union of $\lceil \frac n2 \rceil$ directed edges which are spanning or $D^c$ is the union of some vertex disjoint directed cycles which span all vertices.
\end{prop}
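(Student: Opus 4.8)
The plan is to reduce all three bounds to the single fact that $\rad(D)=r$ forces $\ecc^+(x)+\ecc^-(x)\ge 2r$ for every vertex $x$, combined with two elementary estimates for $W(D)$: a crude count of the arcs of the complement $D^c$, and a sharper bound on the transmissions. Throughout I assume $D$ is biconnected, since otherwise $W(D)=\infty$ and there is nothing to prove. As $d(u,v)\ge 1$ for every ordered pair $u\ne v$, one has $W(D)\ge n(n-1)=2\binom n2$, with equality exactly when all distances equal $1$, i.e. $D=K_n$; since $K_n$ has radius $1$, this settles the case $r=1$. For the remaining cases I record that an ordered pair contributes more than $1$ to $W(D)$ precisely when $\vec{uv}\in A(D^c)$, so
\[
W(D)=n(n-1)+\sum_{\vec{uv}\in A(D^c)}\bigl(d(u,v)-1\bigr).
\]

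For $r=\tfrac32$ this identity suffices. A vertex $x$ with $\ecc^+(x)=\ecc^-(x)=1$ would give $\rad(D)=1$, so $\rad(D)>1$ means every vertex is the tail or the head of some arc of $D^c$; hence $A(D^c)$ is spanning and $\lvert A(D^c)\rvert\ge\lceil n/2\rceil$. Each summand above is at least $1$, so $W(D)\ge n(n-1)+\lceil n/2\rceil$. Equality forces $\lvert A(D^c)\rvert=\lceil n/2\rceil$ and $d(u,v)=2$ for every missing arc, which is exactly the statement that $D^c$ is a spanning union of $\lceil n/2\rceil$ directed edges; conversely one verifies that for such a $D^c$ every missing arc is indeed at distance $2$ and some vertex has $\{\ecc^+,\ecc^-\}=\{1,2\}$, so the radius equals $\tfrac32$.

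For $r=2$ the arc count is too weak (a missing arc may have distance $>2$), so I would bound each transmission separately. If $\ecc^+(x)=k$, then a breadth-first search from $x$ places at least one vertex at each of the distances $1,\dots,k$, whence $\sum_{u}d(x,u)\ge(n-1)+\binom k2$, with equality iff exactly one vertex lies at each distance $2,\dots,k$ and every other vertex is an out-neighbour of $x$; the symmetric statement holds for $\ecc^-$. Summing over $x$ and using $W(D)=\sum_x\sum_u d(x,u)=\sum_x\sum_u d(u,x)$ gives
\[
2W(D)\ge 2n(n-1)+\sum_x\left(\binom{\ecc^+(x)}2+\binom{\ecc^-(x)}2\right).
\]
Since $\rad(D)=2$ gives $\ecc^+(x)+\ecc^-(x)\ge 4$ for all $x$, and among integers $a,b\ge 1$ with $a+b\ge 4$ the quantity $\binom a2+\binom b2$ is minimized, with value $2$, only at $a=b=2$, the right-hand side is at least $2n(n-1)+2n$, so $W(D)\ge n^2$. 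Equality demands $\ecc^+(x)=\ecc^-(x)=2$ at every vertex together with tightness of both transmission bounds, so every vertex misses exactly one out-arc and exactly one in-arc of the bidirected clique; thus $D^c$ has in- and out-degree exactly $1$ everywhere, i.e. it is a vertex-disjoint union of directed cycles spanning $V$, and conversely any such $D^c$ makes all eccentricities equal $2$ and all missing arcs have distance $2$.

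The main obstacle will be the equality analysis for $r=2$: I must argue that tightness of the transmission estimate vertex-by-vertex (not merely on average) forces out- and in-degree exactly one in $D^c$, and then that an in- and out-$1$-regular digraph is precisely a disjoint union of directed cycles. In both nontrivial cases I also need to check the converse direction with care — namely that in the claimed extremal families every missing arc really sits at distance $2$ (which requires $n$ not too small, so that a suitable length-$2$ detour always exists) and that no vertex accidentally attains a strictly smaller eccentricity sum, so that the radius is exactly the prescribed value rather than something smaller.
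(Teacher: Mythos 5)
The paper never actually proves this proposition: it is stated bare, prefaced only by ``for small $r$, we easily can determine the exact minimum Wiener index'', so there is no proof of record to compare yours against, and your argument must stand on its own. It does. The case $r=1$ is immediate as you say; for $r=\tfrac32$ the identity $W(D)=n(n-1)+\sum_{\vec{uv}\in A(D^c)}\bigl(d(u,v)-1\bigr)$, combined with the observation that $\rad(D)>1$ forces every vertex to be the tail or head of an arc of $D^c$ (hence $\lvert A(D^c)\rvert\ge\lceil n/2\rceil$), gives both the bound and the equality structure cleanly; for $r=2$ the transmission estimate $\sum_u d(x,u)\ge (n-1)+\binom{\ecc^+(x)}{2}$ plus the minimization of $\binom a2+\binom b2$ over $a+b\ge 4$ (value $2$, only at $a=b=2$) is sound, and the equality analysis correctly forces $D^c$ to be in- and out-$1$-regular, hence a vertex-disjoint union of directed cycles. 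It is worth noting that your $r=2$ computation is exactly the estimate the paper does write down one theorem later, in the proof of Theorem~\ref{min_rad}: there, with $a=d(x,V)$ and $b=d(V,x)$, the paper uses $\sum_{v\ne x}\bigl(d(x,v)-1+d(v,x)-1\bigr)\ge\binom a2+\binom b2\ge\lfloor(r-0.5)^2\rfloor$, which for $r=2$ gives your per-vertex excess of $2$, and for $r=\tfrac32$ gives excess $1$ and hence (after rounding by integrality) your $\lceil n/2\rceil$ bound as well. Your arc-counting route for $r=\tfrac32$ is nonetheless the better choice there, since the paper's per-vertex estimate does not directly identify which arcs are missing and so makes the equality characterization awkward.

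The caveat you flag about small $n$ is genuine and not merely pedantic: for $n=3$ and $r=\tfrac32$, the complement $\{\vec{za},\vec{zb}\}$ is a spanning union of $\lceil n/2\rceil$ directed edges, yet the resulting digraph has a vertex of out-degree $0$, is not biconnected, and has infinite total distance; so the proposition's ``if and only if'', read literally, requires $n\ge 4$ (or, at $n=3$, the restriction that the two missing arcs share neither their tail nor their head). With that exception noted and the routine verifications you list carried out, your proof is complete.
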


When $r \ge \frac 52$, the analog of Lemma~\ref{lem2di} hold. Nevertheless, the analog of Proposition~\ref{proplem} does not give a unique configuration and so we only conclude with the following asymptotic result.

\begin{thr}\label{min_rad}
	For $r \ge \frac 52$, the minimum Wiener index among all digraphs with radius $r$ and order $n$ is of the form $2\binom{n}2 + \lfloor (r-0.5)^2 \rfloor n + \Theta_r(1).$
\end{thr}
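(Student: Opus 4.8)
The plan is to sandwich the minimum between two quantities of the form $2\binom n2+\lfloor(r-\tfrac12)^2\rfloor n\pm O_r(1)$; since their difference is $O_r(1)$, this yields the asserted $\Theta_r(1)$ error. Here $\rad$ is the symmetric radius, and I write $a=\lceil r\rceil$, $b=\lfloor r\rfloor$, so $a+b=2r$ because $2r\in\N$. The first reduction I would record is the elementary minimisation $\min\{\binom\alpha2+\binom\beta2 : \alpha,\beta\in\N,\ \alpha+\beta\ge 2r\}=\binom a2+\binom b2$, which holds by convexity (the balanced split is optimal), together with the identity $\binom a2+\binom b2=\lfloor(r-\tfrac12)^2\rfloor$, checked separately for integer and half-integer $r$.

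For the upper bound I would give an explicit extremal-type digraph $D^\ast$ and evaluate $W(D^\ast)$. Take a bidirected clique on a set $C$ of $n-2r$ vertices and attach a gadget of $2r$ vertices: an out-arm $p_1,\dots,p_a$ with arcs from every clique vertex to $p_1$, the directed path $p_1\to\cdots\to p_a$, and arcs from every $p_j$ back to every clique vertex; and an in-arm $q_1,\dots,q_b$ with the directed path $q_b\to\cdots\to q_1$, arcs from $q_1$ to every clique vertex, and arcs from every clique vertex to every $q_j$. A direct check shows that each clique vertex has out-distances $1,\dots,a$ to the $p_j$ and $1$ to each $q_j$, and in-distances $1$ from each $p_j$ and $1,\dots,b$ from the $q_j$; hence $\ecc^+=a$, $\ecc^-=b$, and its contribution to $W$ exceeds the all-ones value by exactly $\binom a2+\binom b2$. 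After verifying that every gadget vertex has $\ecc^++\ecc^-\ge 2r$ (so that $\rad(D^\ast)=r$), summing the contributions and absorbing the $\lvert C\rvert$-versus-$n$ bookkeeping into the constant gives $W(D^\ast)=2\binom n2+\lfloor(r-\tfrac12)^2\rfloor n+O_r(1)$.

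For the lower bound I would follow the deletion scheme of Section~\ref{AsProofChendigraph}. A minimiser $D$ satisfies $W(D)\le 2\binom n2+O_r(n)$ by the construction, so its average total degree is $2(n-1)-O_r(1)$ and Lemma~\ref{lem:g1} produces a bidirected clique $K_k$ with $k=\Omega(n)$ of high-degree vertices. Using the analog of Lemma~\ref{lem2di}, valid for $r\ge\tfrac52$, I peel off vertices one at a time down to a core $H$ of order $n_0=O_r(1)$, each deletion preserving all distances and the radius $r$. When a vertex $v_i$ is reinserted, producing a radius-$r$ digraph $D_i$ on $i$ vertices, distances among the earlier vertices are unchanged, so $W(D_i)-W(D_{i-1})=\sum_{u\ne v_i}(d(v_i,u)+d(u,v_i))$. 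Setting $\alpha=\ecc^+(v_i)$, $\beta=\ecc^-(v_i)$ in $D_i$, a shortest out-path exhibits vertices at out-distances $1,\dots,\alpha$, giving $\sum_u(d(v_i,u)-1)\ge\binom\alpha2$, and dually $\sum_u(d(u,v_i)-1)\ge\binom\beta2$; since $D_i$ has radius $r$ we have $\alpha+\beta\ge 2r$, so by the minimisation above the excess is at least $\lfloor(r-\tfrac12)^2\rfloor$. Each step therefore contributes at least $2(i-1)+\lfloor(r-\tfrac12)^2\rfloor$, and summing over $i$ from $n_0+1$ to $n$ yields $W(D)\ge 2\binom n2+\lfloor(r-\tfrac12)^2\rfloor n-O_r(1)$, matching the construction.

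The hard part, and the reason the statement only claims $\Theta_r(1)$ rather than an exact constant, is the equality analysis. This per-vertex estimate is the analog of Proposition~\ref{proplem}, but whereas there the equality case was rigid enough to force the global structure $D_{n,r,s}$ (after which Lemma~\ref{D2rr1core} pinned the additive term to $-4\binom r3$), here the balanced optimum $\binom a2+\binom b2$ is realised by several non-isomorphic local configurations of the two arms. Consequently the deletion argument cannot identify the core $H$ uniquely, no clean analog of Lemma~\ref{D2rr1core} is available, and the additive constant can only be bounded, not computed---exactly the $\Theta_r(1)$ in the statement. A secondary, purely finite, technical point to discharge is the radius verification for $D^\ast$: one must confirm that no gadget vertex has symmetric eccentricity below $2r$, so that the radius is exactly $r$ and the clique vertices (of eccentricity sum $2r$) are genuine centres.
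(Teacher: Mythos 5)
Your proposal is correct, and its skeleton is the same as the paper's: the lower bound comes from the per-vertex excess bound $\binom{\alpha}{2}+\binom{\beta}{2}\ge\lfloor (r-0.5)^2\rfloor$ for a vertex with $\ecc^+ + \ecc^- \ge 2r$, fed into the vertex-deletion scheme of Section~\ref{AsProofChendigraph}, and the upper bound comes from an explicit construction. Two points of comparison are worth recording. First, the paper's written proof only states the per-vertex inequality and the constructions; the summation mechanism that turns the per-vertex bound into a coefficient $\lfloor (r-0.5)^2\rfloor$ of $n$ (rather than $\tfrac12\lfloor (r-0.5)^2\rfloor n$, which is all a naive sum over all vertices would give, since each ordered pair is counted twice) is exactly the peel-off/re-insertion argument you spell out, which the paper leaves implicit in the sentence ``the analog of Lemma~\ref{lem2di} hold[s]'' preceding the theorem. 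Your version makes this step explicit, which is a genuine improvement in rigor, though you, like the paper, take the validity of the deletion lemma for the symmetric radius and $r\ge\tfrac52$ on faith rather than re-proving the analog of Lemma~\ref{lem:g2}. Second, your extremal construction differs from the paper's: you attach an out-arm of length $\lceil r\rceil$ and an in-arm of length $\lfloor r\rfloor$ to a bidirected clique, whereas the paper blows up one vertex of a directed cycle $C_{r+1}$ (integer $r$), respectively of $C_{r+1.5}$ with one reversed chord (half-integer $r$). Both give clique/blow-up vertices with eccentricity sum exactly $2r$ and excess exactly $\binom{\lceil r\rceil}{2}+\binom{\lfloor r\rfloor}{2}=\lfloor (r-0.5)^2\rfloor$, so the two constructions are interchangeable; yours requires the extra (finite) verification of the arm vertices' eccentricities, which you correctly flag and which does check out. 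Your closing explanation of why only $\Theta_r(1)$ is claimed matches the paper's remark that the equality configuration is not unique.
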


\begin{proof}
	By definition of the radius, for every vertex $x \in V$, we have $d(x,V)+d(V,x)\ge 2r$. Let $a=d(x,V)$ and $b=d(V,x).$
	Then $\sum_{v \in V \backslash x} \left( d(x,v)-1+d(v,x)-1 \right) \ge \frac{a(a-1)}2+\frac{b(b-1)}2 \ge \lfloor (r-0.5)^2 \rfloor.$

	When $r \in \mathbb N$, we can take the blow-up of a vertex of a directed cycle $C_{r+1}.$
	When $r \in \frac{1}{2}+\mathbb N$, we can take a directed cycle $C_{r+1.5}$ with an additional directed edge in the opposite direction between two neighbours.
	Now take a blow-up of the startvertex of that additional directed edge.
\end{proof}

Note that taking a blow-up by a clique $K_{n-4}$ in vertex $4$ or $2$ of the digraph in Figure~\ref{fig:digraph_rad} gives a digraph with a smaller total distance than the blow-up of a vertex of a $C_{r+1}$ when $r=3$.

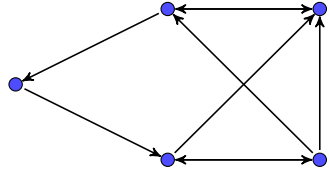
\begin{figure}[h]
	\centering

\begin{tikzpicture}

\node[label=center:\(\),fill=ududff,style={minimum size=5,shape=circle}] (v0) at (4,1) {};
\node[label=center:\(\),fill=ududff,style={minimum size=5}] (v1) at (2,1) {};
\node[label=center:\(\),fill=ududff,style={minimum size=5,shape=circle}] (v2) at (0,0) {};
\node[label=center:\(\),fill=ududff,style={minimum size=5,shape=circle}] (v3) at (2,-1) {};
\node[label=center:\(\),fill=ududff,style={minimum size=5,shape=circle}] (v4) at (4,-1) {};

\Edge[lw=0.1cm,style={post, right}](v0)(v1)
\Edge[lw=0.1cm,style={post, right}](v1)(v0)
\Edge[lw=0.1cm,style={post, right}](v1)(v2)
\Edge[lw=0.1cm,style={post, right}](v4)(v1)
\Edge[lw=0.1cm,style={post, right}](v4)(v3)
\Edge[lw=0.1cm,style={post, right}](v2)(v3)
\Edge[lw=0.1cm,style={post, right}](v3)(v4)
\Edge[lw=0.1cm,style={post, right}](v3)(v0)

\Edge[lw=0.1cm,style={post, right}](v4)(v0)
\end{tikzpicture}
	\caption{Digraph with $\rad=3$ and small Wiener index}
\label{fig:digraph_rad}
\end{figure}

\section{Maximum size biconnected digraphs}\label{sec:Vizing_digraph}

In this section, we prove that Conjecture~\ref{VZdi} is true when $r=3$, as well as in the case $r>3$ and $n$ large enough with respect to $r$.

\begin{thr}
	Let $n \ge 6.$ Then any biconnected digraph $D=(V,A)$ with order $n$ and outradius $3$ satisfies $\lvert A \rvert \le (n-2)^2$. Equality holds if and only if $ D \cong D_{n,r,s}$ for some $1 \le s \le \frac{n-2r+2}{2}$.
\end{thr}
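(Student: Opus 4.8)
The plan is to count non-arcs. Write $\overline{m}=n(n-1)-\lvert A\rvert$ for the number of ordered pairs $(u,v)$, $u\ne v$, with $\vec{uv}\notin A$. Since a direct count of $D_{n,3,s}$ over its two blow-up cliques, its four path vertices and the arcs joining them gives $\lvert A(D_{n,3,s})\rvert=(n-2)^2$, the bound $\lvert A\rvert\le(n-2)^2$ is exactly $\overline{m}\ge 3n-4$, and the conjectured extremal digraphs meet it with equality. I would establish $\overline{m}\ge 3n-4$ and then read off the equality case.

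Two structural facts drive the argument. First, every vertex $v$ misses at least two out-arcs: as $D$ is biconnected with outradius $3$ we have $\ecc^+(v)\ge 3$, so some $z$ has $d(v,z)\ge 3$, and the second vertex of a shortest path from $v$ to $z$ sits at out-distance exactly $2$ and differs from $z$; hence $\deg^+(v)\le n-3$ and $\overline{m}\ge 2n$. Call $v$ \emph{saturated} if $\deg^+(v)=n-3$. Second, a saturated $v$ misses exactly two vertices $a_v,b_v$ with $d(v,a_v)=2$ and $d(v,b_v)=3$; from $d(v,b_v)=3$ one gets $N^-(b_v)\cap N^+(v)=\emptyset$, and since $N^+(v)=V\backslash\{v,a_v,b_v\}$ and $v\notin N^-(b_v)$, biconnectivity forces $N^-(b_v)=\{a_v\}$: the distance-$3$ witness $b_v$ has in-degree exactly $1$. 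Thus saturated vertices can exist only when $D$ has vertices of in-degree $1$, and these will supply many non-arcs.

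The count then proceeds by the number $k$ of in-degree-$1$ vertices. Reading $\overline{m}=\sum_{z}\bigl((n-1)-\deg^-(z)\bigr)$ by heads, each in-degree-$1$ vertex contributes $n-2$ missing in-arcs and distinct such vertices contribute disjoint arcs, so $\overline{m}\ge k(n-2)$. If $k\ge 4$ then $\overline{m}\ge 4(n-2)>3n-4$ for $n\ge 6$ and we are done (with strict inequality, so such $D$ are not extremal). If $k=0$ there are no saturated vertices, every vertex misses at least three out-arcs, and $\overline{m}\ge 3n$. The substantive regime is $k\in\{1,2,3\}$, where the $k(n-2)$ arcs alone fall short; here I would combine them with the out-eccentricity constraint, arguing that if too many vertices had in-degree $n-1$ (contributing nothing) then some vertex would reach all others within two steps, through the unique in-neighbours of the in-degree-$1$ vertices, contradicting outradius $3$. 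This forces the remaining deficiency and yields $\overline{m}\ge 3n-4$; tracing equality shows $k=2$, with the two in-degree-$1$ vertices and their neighbours forming two length-two tails while the remaining $n-4$ vertices make up two fully joined bidirected cliques, so that $D\cong D_{n,3,s}$ with $s$ the size of one clique.

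The main obstacle is precisely the tight regime $k\in\{1,2,3\}$: unlike the case of many in-degree-$1$ vertices, here the required non-arcs are produced only through the interplay between the rigid ``unique in-neighbour'' structure of the in-degree-$1$ witnesses and the out-eccentricity constraint, and one must carry out the resulting case analysis (including the degenerate overlaps among the auxiliary vertices $a_v,b_v$) carefully enough both to reach $3n-4$ and to pin down every arc of the extremal configuration for the equality statement.
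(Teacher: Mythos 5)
Your reductions are sound as far as they go: $\lvert A(D_{n,3,s})\rvert=(n-2)^2$, the equivalence with $\overline{m}\ge 3n-4$, the bound $\deg^+(v)\le n-3$, the fact (using biconnectivity) that the distance-$3$ witness of a saturated vertex has in-degree exactly $1$, and the disposal of the regimes $k=0$ and $k\ge 4$ are all correct. But the entire content of the theorem sits in the regime $k\in\{1,2,3\}$, which you defer, and the one-sentence plan you give for it does not work as stated. You propose a contradiction from ``too many vertices of in-degree $n-1$ reaching everything within two steps''; however, in the tight regime no contradiction is available, because extremal digraphs ($k=2$) actually exist there --- the argument must instead extract exactly $3n-4$ non-arcs and then pin down every arc. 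If one tries to execute your head-count, it stalls: let $b$ be a witness with unique in-neighbour $a$ and let $S_b$ be the saturated vertices missing $\{a,b\}$. Any $u\notin\{a,b\}$ with an arc into $S_b$ reaches every vertex other than $a,b$ in at most two steps, and since $d(u,b)=d(u,a)+1$, the outradius condition forces only the \emph{single} extra non-arc $u\to a$. Summing the guaranteed non-arcs (the $n-2$ into $b$, one into $a$ per such $u$, and $\lvert S_b\rvert$ per vertex with no arc into $S_b$) gives on the order of $2n-2+\lvert Q\rvert$, where $Q$ is the set of vertices avoiding $S_b$; this falls short of $3n-4$ unless $Q$ is nearly all of $V$. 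The non-arcs you still need come from a finer interlocking structure --- in $D_{n,3,s}$ the two witnesses are themselves saturated, each lies in the other witness's saturated set, and the two cliques attach to opposite sides --- and establishing that this structure is forced (and is the only possibility for equality, including ruling out $k=1,3$ and degenerate overlaps among the $a_v,b_v$) is precisely the case analysis you have not carried out. The equality characterization in your last paragraph is a description of the answer, not a derivation.

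For contrast, the paper's proof organizes this unavoidable case analysis by tails rather than heads: from $\lvert A\rvert\ge(n-2)^2$ it deduces that at least four vertices are saturated, fixes one such center $c$ with its distance-$2$ and distance-$3$ vertices $a_2,a_3$, splits the remaining vertices into $Y=\{y: d(c,y)=1=d(y,a_2)\}$ and $X$, and then treats three cases according to where the other saturated vertices lie (in $Y$, at least two in $X$, exactly one in $X$), in each case either beating the bound or forcing $D\cong D_{n,3,s}$. Your in-degree-$1$ observation is a nice structural lemma that could shorten parts of that analysis, but as submitted the proposal proves the theorem only in the regimes where it is easy.
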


\begin{proof}
	Note that $\lvert A \rvert = \sum_{v \in V} d^+(v).$
	Since the outradius of $D$ is $3$, we know $\ecc^+(v) \ge 3$ for every $v \in V$, which implies that $d^+(v) \le n-3$ for every $v \in V$.
	If $\lvert A (D) \rvert \ge n^2-4n+	4$, we know there are at least $4$ vertices with outdegree equal to $n-3.$
	For any vertex $v$, let $F(v)$ be the set of vertices $x$ in $V$ different from $v$ for which there is no arrow from $v$ to $x$, i.e. $F(v)=V \backslash N^+[v].$ Note that $d^+(v)=n-1-\lvert F(v)\rvert.$
	Let $c$ (being a center of the digraph) have outdegree $n-3$ and let $a_2$ and $a_3$ be the $2$ vertices such that $d(c,a_2)=2$ and $d(c,a_3)=3.$
	Let $Y$ be the set of vertices $y$ such that $d(c,y)=1=d(y,a_2)$ and $X$ be the remaining vertices. Note that $X$ is not empty, as otherwise $D$ has at most the size of a digraph formed by taking a blowup of a vertex of a directed $C_4$, which has a smaller size than $(n-2)^2.$ For this, remark that there cannot be a directed edge from some $y \in Y$ or $a_2$ to $c$ as then the outereccentricity of that vertex is at most $2$. So there is a directed edge from $a_3$ to $c$.
	For the same reason, there cannot be directed edges from $a_3$ to $a_2$ or some $y \in Y$.
	It is possible there is an edge from $a_2$ to some $y \in Y$, but then these vertices cannot have $Y \subset N^+[y]$ and hence the outdegree of these vertices is lower than in the blowup of the directed $C_4,$ from which the bound on the size follows.
	
	Now we do some case analysis.
		
	\textbf{Case 1} There is a $y \in Y$ with $d^+(y)=n-3$.
	Note that there is no directed edge from $y$ to $c$ since otherwise $\ecc^+(y)=2$ and so there is an arrow from $y$ to all vertices different from $c$ and $a_3,$ i.e. $F(y)= \{a_3,c\}.$
	Now we see $\{a_2,a_3,c\} \subset F(x)$ for all $x \in X$ (using $d(y,c)=3$ and $d(y,x)=1$), $\{c,y\} \subset F(a_2)$, $Y \cup \{a_2\} \subset F(a_3)$ and $\{c,a_3\} \subset F(y')$ for all $y' \in Y$ different from $y$. From this we see $\lvert A (D) \rvert \le n^2-4n+	4$ and equality is not possible, since then $\ecc^+(a_2)=2.$
	
	\textbf{case 2} The set $X_2$ containing the vertices $x \in X$ with $d^+(x)=n-3$ has size at least $2$, note that we have $F(x)=\{a_2,a_3\}$ for those $x$.
	Furthermore we remark that $X_2 \cup \{c, a_3\} \subset F(y)$ for every $y \in Y$ and $ X_2 \cup \{c\} \subset F(a_2)$.
	We get $\lvert A (D) \rvert \le n^2-4n+	4- \lvert Y \rvert \left( \lvert X_2 \rvert -1 \right) $ in this case.
	
	\textbf{case 3} In the last case, there is one vertex $x_2 \in X$ with $d^+(x_2)=n-3$.
	Furthermore we need $d^+(a_2)=d^+(a_3)=d^+(c)=n-3$ and $d^+(v)=n-4$ for all the remaining vertices $v$.
	We see that $F(x_2)=F(c)=\{a_2, a_3\}$ and $F(a_2)=\{c, x_2\}.$
	
	Also $F(y)=\{a_3,c,x_2\}$ as $\ecc^+(y) \ge 3$ for every $y \in Y.$
	If $X=\{x_2\},$ then we get a contradiction as there should be a directed edge from $a_3$ to exactly one of $c$ and $x_2$ and consequently there cannot be a directed edge to some $y \in Y$ or to $a_2$.
	So now assume $X\not=\{x_2\}.$ By definition we already have $\{a_2,a_3\} \subset F(x)$ for any $x \in X.$
	Since $\ecc^+(a_2)=3$ and $F(a_2)=\{c, x_2\},$ we see that there can't be directed edges from $X \backslash \{x_2\}$ to both $c$ and $x_2$.
	Wlog there is no directed edge towards $c$, i.e. $F(x)=\{a_2,a_3,c\}$ for every $x \in X \backslash \{x_2\}$.
	Finally, we see $F(y)=\{c,x_2\}$ since there cannot be a directed edge towards both $c$ and $x_2$ and if there is exactly one, then we need $d(a_3,a_2)=3$ which cannot either.
	So we conclude $D$ is isomorphic to $D_{n,r,s}$ where $s=\min \{ |X|, |Y|\}.$
	\end{proof}

Now, we prove that Conjecture~\ref{VZdi} holds when $n$ is large enough wrt to a fixed $r$.
First, we note that the analog of Lemma~\ref{lem2di} holds.

\begin{lem}\label{lem2di_size}
	Let $r \ge 3$.
	There is a value $n_0(r)$ such that for any $n \ge n_0$ and any digraph $D=(V,A)$ of order $n$ and outradius $r$ with maximum size among such digraphs, there is a vertex $v \in D$ such that $D \backslash v$ has outradius $r$ and the distance between any $2$ vertices of $D \backslash v$ equals the distance between them in $D.$
\end{lem}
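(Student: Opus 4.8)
The plan is to reuse the argument of Lemma~\ref{lem2di} essentially unchanged, the only new input being that maximality of the size plays the role that the Wiener-index bound played there. In Lemma~\ref{lem2di} the hypothesis of small total distance was used solely to force the size to satisfy $\lvert A\rvert \ge n(n-1-t)$ for a constant $t$; everything after that (extracting a large bidirected clique via Lemma~\ref{lem:g1} and then a deletable vertex via Lemma~\ref{lem:g2}) depends only on this size bound. So I would first establish the same size bound, now from maximality.

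Since $D_{n,r,1}$ has outradius $r$, it is a valid competitor, and any maximum-size digraph $D$ satisfies $\lvert A\rvert \ge \lvert A(D_{n,r,1})\rvert$. Now $D_{n,r,1}$ is obtained from the $2r$-vertex core $D_{2r,r,1}$ by blowing up one vertex into a bidirected clique $W$ on $n-2r+1$ vertices. The arcs of the bidirected clique $K_n$ that $D_{n,r,1}$ can possibly omit lie either among the $2r-1$ distinguished (non-$W$) vertices, of which there are at most $(2r-1)(2r-2)$, or between $W$ and a distinguished vertex, of which there are at most $2(2r-1)(n-2r+1)$; inside $W$ no arc is missing. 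Hence $\lvert A(D_{n,r,1})\rvert \ge n(n-1) - tn$ for a constant $t = t(r) = O(r)$, and therefore $\lvert A\rvert \ge n(n-1-t)$ for any maximum-size $D$.

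From here the remaining steps are mechanical. By the (digraph) handshaking identity $\sum_{v\in V}\deg(v) = 2\lvert A\rvert$, the average total degree of $D$ is at least $2(n-1)-2t$, so Lemma~\ref{lem:g1} provides a bidirected clique $K_k$ with $k \ge \frac{n}{8t}$ all of whose vertices have total degree at least $2(n-1)-4t$. Choosing $n_0(r) = 8t(40t^2+4t)$ (exactly as in Lemma~\ref{lem2di}) guarantees that for $n \ge n_0$ we have $k \ge \frac{n}{8t} > 32t^2 + 4t + \frac{tn}{k}$ together with $\lvert A\rvert \ge n(n-1-t)$, so the hypotheses of Lemma~\ref{lem:g2} are met. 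Lemma~\ref{lem:g2} then yields a vertex $v \in K_k$ for which $D\backslash v$ has outradius $r$ and preserves all pairwise distances, which is exactly the assertion.

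I do not expect a genuine obstacle here: Lemmas~\ref{lem:g1} and~\ref{lem:g2} were deliberately stated in terms of the size hypothesis $\lvert A\rvert \ge n(n-1-t)$ precisely so that they could be invoked in both the minimum-total-distance and the maximum-size settings. The only slightly new ingredient is the elementary count giving $\lvert A(D_{n,r,1})\rvert = n(n-1) - O_r(n)$, and that is where what little bookkeeping there is will lie.
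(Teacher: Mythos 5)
Your proposal is correct and follows essentially the same route as the paper: the paper likewise derives the size bound $\lvert A\rvert \ge n(n-1-t)$ from the competitor $D_{n,r,s}$ (using the sharper constant $t(r)=2r-3$ rather than your cruder $O(r)$ count of missing arcs, which changes nothing structurally), then invokes Lemma~\ref{lem:g1} and Lemma~\ref{lem:g2} with $n_0(r)=8t(40t^2+4t)$ exactly as you do.
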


\begin{proof}
	Note that such a digraph $D$ satisfies $\lvert A \rvert > n(n-1-t)$, where $t:=t(r)=2r-3$ due to the example $D_{n,r,s}.$
	We know from Lemma~\ref{lem:g1} that $D$ contains a clique $K_k$ with $k \ge \frac{n}{8t}$, such that the (total) degree of all its vertices is at least $2(n-1)-4t.$
	Let $n_0:=n_0(r)= 8t(40t^2+4t)$.
	Since $k \ge \frac{n}{8t} >40t^2+4t \ge 32t^2+4t+\frac{tn}{k}$ when $n>n_0$, the result follows from Lemma~\ref{lem:g2}.
\end{proof}

\begin{prop}\label{proplem_size}
	Let $D=(V,A)$ be a digraph of order $n$ and outradius $r$.
	Then for any vertex $v$, the total degree $\deg(v)\le 2(n-1)-(2r-3).$
	Equality can occur if and only if
	\begin{itemize}
		\item  $d^-(v)=n-1$ and $d^+(v)=n-1-(2r-3)$,
		\item  there exists two disjoint directed paths $vu_1u_2\ldots u_{r}$ and $v w_2 w_3 \ldots w_r$ in $D$ with $d(v,u_r)=r$ and $d(v,w_r)=r-1$	
	\end{itemize}
\end{prop}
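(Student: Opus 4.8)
The plan is to recast the degree bound as a statement about missing edges at $v$. Writing $F^+(v)=\{u\ne v: d(v,u)\ge 2\}$ and $F^-(v)=\{u\ne v: d(u,v)\ge 2\}$ for the sets of non-out- and non-in-neighbours, we have $\deg^\pm(v)=(n-1)-\lvert F^\pm(v)\rvert$, so $\deg(v)\le 2(n-1)-(2r-3)$ is exactly $\lvert F^+(v)\rvert+\lvert F^-(v)\rvert\ge 2r-3$. Since the outradius is $r$, every vertex has out-eccentricity at least $r$; in particular $\ecc^+(v)=r'\ge r$, so there is a shortest out-path $P=vu_1u_2\cdots u_{r'}$ with $d(v,u_i)=i$. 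The vertices $u_2,\dots,u_{r'}$ then lie in $F^+(v)$, giving $\lvert F^+(v)\rvert\ge r'-1$. If $r'\ge 2r-2$ this already exceeds $2r-3$ and we are finished, so the work is confined to the range $r\le r'\le 2r-3$.

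For this core range the engine is that each path vertex $u_i$ also has $\ecc^+(u_i)\ge r$, whereas travelling forward along $P$ from $u_i$ only reaches distance $r'-i$; for $i$ in the window $r'-r+1\le i\le r'$ this is strictly less than $r$, so $u_i$ realizes distance $r$ through some witness $z_i$ off the forward path. Tracing the $v$–$z_i$ geodesic and its overlap with $P$, a triangle-inequality computation shows the geodesic must branch off $P$ early (if it followed $P$ up to $u_j$ with $j$ large, $z_i$ would be too close to $u_i$ to witness eccentricity $r$), so its interior vertices are \emph{new} members of $F^+(v)$; alternatively, when a near vertex returns to $v$ only along a path of length $\rho\ge 2$, that return geodesic contributes $\rho-1$ vertices to $F^-(v)$. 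The point is to balance a longer return (more in-deficiency) against the correspondingly shorter forced out-branch, exactly the convexity/QM–AM bookkeeping already carried out in Proposition~\ref{proplem}, and to check that all the counted vertices are distinct from those on $P$. I have verified that this closes cleanly in the extremal case $r'=r$: if $u_1\to v$, then the far vertex of $u_1$ cannot be reached through $P$ and forces a second out-branch from $v$ of length at least $r-1$, internally disjoint from $P$ and supplying $r-2$ further non-out-neighbours for a total of $2r-3$; and if $d(u_1,v)=\rho\ge 2$, one trades the $\rho-1$ resulting non-in-neighbours for the shorter forced branch, again totalling $2r-3$.

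For the equality characterization I would run the inequalities backward. Equality forces $r'=r$ (any larger eccentricity leaves slack in $\lvert F^+(v)\rvert$), every branch vertex to sit at its minimal distance, and every vertex off the branches to satisfy $d(v,u)=1$; since the in-side cannot absorb any deficiency without violating tightness, this pins $d^-(v)=n-1$, hence $d^+(v)=n-1-(2r-3)$. The two forced branches are then the disjoint directed paths $vu_1\cdots u_r$ with $d(v,u_r)=r$ and $vw_2\cdots w_r$ with $d(v,w_r)=r-1$, which is precisely the stated configuration. The hard part is the intermediate range $r<r'<2r-2$: there a single far vertex no longer furnishes enough non-out-neighbours, and because the distance function is not symmetric one must account simultaneously over the whole critical window $r'-r+1\le i\le r'$, trading missing in-edges against missing out-edges while guaranteeing no vertex of $P$ is double counted.
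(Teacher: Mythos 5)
Your deficiency framing is the right one and, for the cases you actually treat, your computations agree with the paper's proof (the paper also works with the missing-edge sets and a forced second branch). But two genuine gaps remain, and the first is the one you flag yourself: the range $r<r'<2r-2$ is left unproven. This range is nonempty exactly when $r\ge 4$, which is precisely the regime where the paper invokes this proposition (Theorem~\ref{mainVz_bicDi}; the case $r=3$ has its own separate theorem), so it cannot be waved away. Moreover, it needs no ``simultaneous accounting over the whole critical window'': the paper's count is uniform in $r'$. Let $i$ be the smallest index with $r'-r+1\le i\le r'$ such that there is an arrow from $u_i$ to $v$ (if none exists, the deficiency is already at least $(r'-1)+r\ge 2r-1$). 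Then $d(u_{r'-r+1},v)\le i-r'+r$, so a witness $x$ of $\ecc^+(u_{r'-r+1})\ge r$ satisfies $d(v,x)\ge r'-i$; the $v$--$x$ geodesic avoids $u_{r'-r+1},\dots,u_{r'}$ (else $d(v,x)\ge r'+1>\ecc^+(v)$), so it contributes $r'-i-1$ vertices to $F^+(v)$ on top of the $r-1$ vertices $u_{r'-r+2},\dots,u_{r'}$, while $u_{r'-r+1},\dots,u_{i-1}$ lie in $F^-(v)$; the three groups always sum to $2r-3$.

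The second gap is a missing idea rather than a missing case: your equality characterization does not follow from your counting, and your own verification shows why. You computed that for $r'=r$ and $d(u_1,v)=\rho$ with $2\le\rho\le r-1$ the total is exactly $(r-1)+(r-\rho-1)+(\rho-1)=2r-3$; so tightness of the degree bound is, at the level of bookkeeping, perfectly compatible with $F^-(v)\ne\emptyset$. Hence your step ``the in-side cannot absorb any deficiency without violating tightness'' is unsupported (as a counting statement it is false), and $d^-(v)=n-1$ is not pinned down. What is needed, and what the paper supplies, is a second use of the outradius hypothesis to kill these tight configurations: if equality holds with $\rho\ge 2$ and $\rho<r$, then since $F^-(v)$ consists of only the $\rho-1\le r-2$ return-geodesic vertices, one of which is $u_1$, some $u_j$ with $2\le j\le r-1$ still has an arrow to $v$; taking $i$ to be the smallest such index, $1<i<r$, and tightness (every vertex off the two counted branches satisfies $d(v,w)=1$) gives $\ecc^+(u_i)\le\max\{r-i,\ i,\ 1+(r-\rho),\ 2\}\le r-1$, contradicting that $D$ has outradius $r$. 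Combined with the observations that $i=r$ forces deficiency at least $2r-2$ and that $r'>r$ adds $u_{r'-r+1}$ as an extra uncounted member of $F^+(v)$, this is what forces $i=1$, $r'=r$, and the stated two-path configuration; without this eccentricity contradiction the ``only if'' direction of the equality statement remains unproven.
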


\begin{proof}
	Let $\ecc^+(v)=r' \ge r$ and assume $vu_1u_2\ldots u_{r'}$ is a directed path in $D$ with $d(v,u_{r'}) =r'.$
	Let $i$ be the smallest index with $r'-r+1 \le i \le r'$ for which there is an arrow from $u_i$ to $v.$ Note that this one has to exist, or otherwise $\deg(v) \le 2(n-1)-(2r-1).$
	Since $\ecc^+(u_{r'-r+1})\ge r$, there exists a vertex $x$ with $d(u_{r'-r+1},v)+d(v,x)\ge d(u_{r'-r+1},x) \ge r$. 
	Note that this implies $d(v,x) \ge r-(i-r'+r)=r'-i.$
	The shortest path from $v$ to $x$ does not pass $u_{r'-r+1}$ and so we have at least $r'-i-1$ vertices $w$ on this path for which there is no edge from $v$ to $w$.
	There is no edge from $v$ to $u_j$ when $r'-r+2 \le j \le r'$ either.
	Furthermore $v$ is not in the outneighbourhood of $v_j$ when $r'-r+1 \le j < i$.
	If $r'>r$, then $d(v,u_{r'-r+1})>1$ as well and in this case $\deg(v) \le 2(n-1)-(2r-2).$
	So now $r'=r$. If $i=r$, one has $\deg(v) \le 2(n-1)-(2r-2)$ again.
	If $1<i<r$, one gets a contradiction since $\ecc^+(u_i)<r$.
	In the remaining case of equality, we have the desired form.	
\end{proof}

\begin{thr}\label{mainVz_bicDi}
	For $r\ge 4$, there exists a value $n_1(r)$ such that for all $n > n_1(r)$ the following hold
	\begin{itemize}
		\item for any digraph $D$ or order $n$ with outradius $r$, we have $\lvert A(D) \rvert\le \lvert A(D_{n,r,1})\rvert =(n-(r-1))^2+(r-3)$, with equality if and only if $D \cong D_{n,r,s}$ for $1 \le s \le \frac{n-2r+2}{2}$.
	\end{itemize}
\end{thr}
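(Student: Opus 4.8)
The plan is to run the deletion argument of Theorem~\ref{maindi} with the Wiener index replaced by the arc count $\lvert A(D)\rvert$, feeding it the two tools already tailored to the size problem: the deletion lemma (Lemma~\ref{lem2di_size}) and the degree bound (Proposition~\ref{proplem_size}). The arithmetic that makes everything fit is that the two sides agree termwise: from $\lvert A(D_{n,r,1})\rvert=(n-(r-1))^2+(r-3)$ one computes $\lvert A(D_{n,r,1})\rvert-\lvert A(D_{n-1,r,1})\rvert=2n-2r+1$, which is exactly the upper bound $2(n-1)-(2r-3)$ on a single total degree given by Proposition~\ref{proplem_size}. Thus the conjectured extremal family attains the degree bound with equality at every order, and the task is to show that no digraph beats this cumulative count.

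First I would take $t=t(r)=2r-3$ and the constant $n_0(r)$ of Lemma~\ref{lem2di_size}, fix a maximum-size digraph $D$ of order $n$ and outradius $r$, and peel it down one vertex at a time. Lemma~\ref{lem2di_size} yields a deletable vertex, and the density hypothesis $\lvert A\rvert\ge i(i-1-t)$ needed to reapply Lemmas~\ref{lem:g1}--\ref{lem:g2} is preserved under the deletion, because $i(i-1-t)-\bigl(2(i-1)-t\bigr)=(i-1)\bigl((i-1)-1-t\bigr)$ and every vertex has total degree at most $2(i-1)-t$. Writing $D=D_n\supset\cdots\supset D_{n_0}=H$ with $D_{i-1}=D_i\backslash v_i$, each step obeys $\lvert A(D_i)\rvert-\lvert A(D_{i-1})\rvert=\deg_{D_i}(v_i)\le 2i-2r+1$. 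Telescoping and using the termwise identity together with the trivial bound $\lvert A(H)\rvert\le n_0(n_0-1)$, I get that if not a single deletion meets equality in Proposition~\ref{proplem_size}, then every step loses at least one arc and $\lvert A(D)\rvert\le n_0(n_0-1)-(n-n_0)+\sum_{i=n_0+1}^{n}(2i-2r+1)$; once $n$ exceeds $n_1(r):=n_0+\bigl(n_0(n_0-1)-\lvert A(D_{n_0,r,1})\rvert\bigr)$ this drops below $\lvert A(D_{n,r,1})\rvert$, contradicting the maximality of $D$. So some deletion, say of $v_m$ from $D_m$, is tight.

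At a tight step the equality clause of Proposition~\ref{proplem_size} hands me a vertex $w_1:=v_m$ with full in-degree $d^-(w_1)=m-1$ whose out-neighbourhood omits exactly the $2r-3$ far vertices of two vertex-disjoint directed paths $w_1u_1\ldots u_r$ and $w_1w_2\ldots w_r$ with $d(w_1,u_r)=r$ and $d(w_1,w_r)=r-1$. These are precisely the hypotheses of Lemma~\ref{D2rr1core}. I would prove the arc-count analogue of that lemma, namely that any digraph of outradius $r$ carrying such a centre has at most $\lvert A(D_{m,r,1})\rvert$ arcs, with equality only for $D_{m,r,s}$. Combined with the lower bound $\lvert A(D_m)\rvert\ge\lvert A(D_{m,r,1})\rvert$ that the telescoping already forces (since $\lvert A(D)\rvert\ge\lvert A(D_{n,r,1})\rvert$ and every increment above $m$ is at most $2i-2r+1$), this pins $D_m\cong D_{m,r,s}$ and makes all the increments above $m$ tight as well; each such tight addition can only enlarge a centre clique, so $D\cong D_{n,r,s}$ and $\lvert A(D)\rvert=\lvert A(D_{n,r,1})\rvert$, with $s$ in the stated range.

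The main obstacle is this arc-count analogue of Lemma~\ref{D2rr1core}: Proposition~\ref{proplem_size} only certifies one vertex as a centre, so I still have to reconstruct the entire core $D_m[V_1]\cong D_{2r,r,1}$ and the attachment of all remaining vertices. In the Wiener-index setting this is exactly the content of Lemma~\ref{D2rr1core}, whose extremal core is forced by distance-minimality; here I must instead force it by arc-maximality, re-running the same case analysis as an arc count. The arcs that any outradius-$r$ digraph with this centre is compelled to omit are the same ones that inflate the distances in Lemma~\ref{D2rr1core}, so the extremal core should again be $D_{2r,r,1}$; but, exactly as in Section~\ref{AsProofChendigraph}, the non-symmetry of the distance function is what makes this arc-counting the delicate heart of the argument, the propagation from $D_m$ up to $D$ being routine once the centre clique can only be grown tightly.
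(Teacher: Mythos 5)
Your overall skeleton coincides with the paper's: peel vertices via Lemma~\ref{lem2di_size}, telescope the arc count against the termwise identity $\lvert A(D_{n,r,1})\rvert-\lvert A(D_{n-1,r,1})\rvert=2n-2r+1=2(n-1)-(2r-3)$ from Proposition~\ref{proplem_size}, derive a contradiction with maximality if no step is tight, and then analyse a tight step. Your arithmetic is correct, your density-preservation computation $i(i-1-t)-\bigl(2(i-1)-t\bigr)=(i-1)\bigl((i-1)-1-t\bigr)$ even justifies the repeated application of Lemmas~\ref{lem:g1}--\ref{lem:g2} more explicitly than the paper does, and your observation that maximality forces $\lvert A(D_m)\rvert\ge\lvert A(D_{m,r,1})\rvert$ and tightness of all increments above $m$ is sound.

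The genuine gap is the step you yourself call the main obstacle: the ``arc-count analogue of Lemma~\ref{D2rr1core}'' is stated but never proved, and this is not a peripheral detail --- it is the bulk of the paper's proof of this theorem. Your justification for it, that the arcs an outradius-$r$ digraph with this centre is compelled to omit ``are the same ones that inflate the distances'' in Lemma~\ref{D2rr1core}, is a heuristic, not an argument: arc-maximality and distance-minimality are different objective functions, and the paper itself emphasizes (via $Q_3$ in the graph setting) that the two extremal families need not coincide, so the transfer cannot be taken on faith. In fact the paper does not reduce to Lemma~\ref{D2rr1core} at all (that lemma bounds distance sums, not arc counts); instead it runs a dedicated case analysis at the tight step: setting $B=\{u_1,\ldots,u_r,v,w_2,\ldots,w_r\}$ and $R=V_m\setminus B$, it rules out arcs $u_i\to w_j$ for $i\le r-1$, rules out arcs $w_i\to u_j$ for $2\le j\le r-1$ allowing at most one arc into $\{u_1,u_r\}$, constrains $N^+(u_r)\cap B$ via the requirement that $d(u_r,u_{r-1})=r$ or $d(u_r,w_r)=r$, and then shows every $z\in R$ has at most $2r+3$ arcs to and from $B$, using $N^+(z)\cap B\subseteq\{w_3,w_2,v,u_1,u_2\}$ and the incompatibility of $w_2,u_2\in N^+(z)$; only summing these bounds gives $\lvert A_m\rvert\le\lvert A(D_{m,r,1})\rvert$, and the equality discussion (e.g.\ excluding an arc $w_i\to u_r$ because it would force $\ecc^+(w_i)<r$) is what pins $D_m\cong D_{m,r,s}$. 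Until you carry out this counting, your proposal delivers the contradiction half of the argument but neither the extremal bound nor the uniqueness statement.
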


\begin{proof}
	Let $n_1:=n_1(r)=(2r-2)n_0.$
	Take a digraph $D$ of size $n > n_1$ which has the maximum size among all digraphs of order $n$ and outradius $r.$
	By Lemma~\ref{lem2di_size} there exists a digraph $D'$ such that $D$ is formed by adding vertices $v_{n_0+1} \ldots v_n.$
	If for all of these additions, there is no equality in Proposition~\ref{proplem_size}, then as $n>(2r-2)n_0$ we have
	\begin{align*}
	\lvert A(D) \rvert \le n_0(n_0-1)+\sum_{i=n_0+1}^n 2(i-1)-(2r-2)  &=n^2-(2r-1)n+(2r-2)n_0\\ &< \lvert A(D_{n,r,1})\rvert.
	\end{align*} 
	So now assume there was equality in some step in Lemma~\ref{lem2di_size}. Then we know a partial characterization of the digraph $D_m=(V_m,A_m)$ in that step. 
	Let $B=\{u_1,u_2, \ldots, u_{r-1}, u_r, v, w_2, \ldots ,w_r\}$ and $R=V_m \backslash B$.
	Note that there is no directed edge from $u_i$ to $w_j$ if $1 \le i \le r-1$ and $2 \le j \le r$ as otherwise $\ecc^+(u_i)<r$.
	Similarly, there is no directed edge from $w_i$ to $u_j$ if $2 \le i \le r$ and $2 \le j \le r-1$ and at most one to $u_1$ or $u_r$.
	Since $\ecc^+(u_r) \ge r$, we need $d(u_r,u_{r-1})=r$ or $d(u_r,w_r)=r$.
	This implies that it is impossible there are directed edges from $u_r$ to both some $w_i$ and $u_j$ with $2 \le i \le r$ and $1 \le j \le r-1.$
	If $N^+(u_r) \cap B = \{v,w_2,w_3, \ldots, w_r\}$, then $\ecc^+(u_{r-1})<r$ which is a contradiction again.
	%	There are at most $r-1$ directed edges from $u_r$ to $\{u_1,u_2, \ldots, u_{r-1}, w_2, \ldots w_r\}$ since we need $d(u_r,u_{r-1})=r$ or $d(u_r,w_r)=r$.
	Knowing all of this, we already have an upper bound on the number of directed edges between vertices in $B$. 
	Next, let $z \in R$ be a remaining vertex.
	We now prove that there cannot be more than $2r+3$ directed edges between $z$ and $B.$
	Note that $N^+(z) \cap B \subset \{w_3,w_2,v,u_1,u_2\}$.
	If $w_3 \in N^+(z)$, then there is no $u_i$ with $1 \le i \le r-1$ in $N^-(z)$ and hence there are at most $r+6<2r+3$ directed edges between $z$ and $B$ in this case.
	Since $w_2$ and $u_2$ cannot be both in $N^+(z)$ as otherwise $\ecc^+(z)<r$, there are indeed at most $2r+3$ directed edges between $z$ and $B$.
	So indeed $ \lvert A_m \rvert  \le \lvert A(D_{m,r,1}) \rvert$ with equality if and only if $D_m$ is isomorphic to $D_{m,r,s}$ for a certain $s$.
	For this, note that if all equalities need to hold, we cannot have a directed edge from some $w_i$ to $u_r$, since then $d(w_i,u_{r-1})=2$ and hence $\ecc^+(w_i)<r$.
	For every vertex which is added, we know the total degree of that vertex is bounded again by Proposition~\ref{proplem_size} and equality occurs if and only if the digraph with this additional vertex is also of the form $D_{m',r,s'}$ for certain $m'$ and $s'$, from which the conclusion follows.
\end{proof}

\section{Maximum total distance}% given order and radius}

The maximum total distance of graphs given their order and diameter was derived asymptotically in Theorem $3.1$ of \cite{SC19}.

\begin{thr}[\cite{SC19}]
%	The maximum Wiener index of a graph $G$ of order $n$ and diameter $2$ equals $(n-1)^2$. Equality holds if only if the graph is a star.
	
	The maximum Wiener index of a graph with order $n$ and diameter $d \ge 3$ is $\frac{d}{2}n^2-d^{3/2}\Theta( n^{3/2}).$
\end{thr}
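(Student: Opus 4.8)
The plan is to rewrite the quantity so that the leading term is transparent and the theorem becomes a statement about a lower-order correction. Since $G$ has diameter $d$, every distance is at most $d$, so $W(G)=d\binom n2-\Lambda(G)$ with $\Lambda(G):=\sum_{\{u,v\}}(d-d(u,v))\ge0$. Writing $B_t(v)$ for the ball of radius $t$ around $v$, a one-line interchange of summation gives
\[
\Lambda(G)=\frac12\sum_{v\in V}\sum_{t=0}^{d-1}|B_t(v)|-\frac{dn}{2}.
\]
As $d\binom n2=\frac d2n^2-\frac d2n$ and $\frac d2n=o(d^{3/2}n^{3/2})$, the theorem is equivalent to $\min\Lambda(G)=\Theta(d^{3/2}n^{3/2})$, the minimum ranging over graphs of order $n$ and diameter $d$. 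Since the balls are nested, the sum is dominated by its outer terms, and $\sum_v|B_{d-1}(v)|=n+2\bigl(\binom n2-P_d\bigr)$, where $P_d$ is the number of pairs at the full distance $d$; thus minimising $\Lambda$ is closely tied to \emph{maximising the number of diametral pairs}. I would prove the two matching bounds separately.

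For the lower bound on $W$ (equivalently $\Lambda=O(d^{3/2}n^{3/2})$) I would exhibit a graph in which almost every pair is at distance exactly $d$. The model case $d=3$ is a clique $K_c$ on $c=\lceil\sqrt n\,\rceil$ hubs together with the remaining $n-c$ vertices hung on the hubs as pendants, spread as evenly as possible. Two pendants on different hubs are then at distance $3$, and a short computation of $\sum_{t\le2}|B_t(v)|$ over pendants and hubs gives $\Lambda=\Theta(n^{3/2})$ at $c\approx\sqrt n$; the two genuinely competing effects are the \emph{crowding} of pendants sharing a hub (cost $\approx n^2/c$, from the distance-$2$ pairs) and the \emph{number} $\approx c$ of hubs each pendant is forced to stay close to (cost $\approx nc$). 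For general $d$ I would hang the periphery, at depth $\approx d/2$, off a central gadget of diameter $d-2$ in which almost all pairs already lie at distance $d-2$, so that almost all peripheral pairs land at distance $1+(d-2)+1=d$, and then balance the per-level parameters. I expect the combinatorics to be routine but the scaling delicate: producing the exponent $\tfrac32$ on $d$ (not only on $n$) requires an honest check that no intermediate layer contributes more than $O(d^{3/2}n^{3/2})$.

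For the upper bound on $W$ I would lower-bound $\sum_v\sum_{t<d}|B_t(v)|$ for an arbitrary graph of diameter $d$. Two ingredients push the balls up. \emph{Reachability:} with $\Delta$ the maximum degree, $|B_t(v)|\le(1+\Delta)|B_{t-1}(v)|$ and $|B_d(v)|=n$ give $|B_{d-k}(v)|\ge n/(1+\Delta)^{k}$, so balls cannot be genuinely small just below radius $d$. \emph{Crowding:} a vertex of degree $\delta_v$ forces $\binom{\delta_v}{2}$ pairs into distance $\le2\le d-1$. Played against each other these already give the correct power $n^{3/2}$ in the near-regular regime, with the optimum at maximum degree $\approx\sqrt n$ for $d=3$, matching the construction. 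The difficulty is that neither ingredient is uniform in the degree sequence: a few very high-degree vertices make the reachability bound vacuous while themselves forcing a large deficit, so I would need a convexity/charging argument trading these off vertex by vertex.

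The main obstacle is exactly this last step: turning the heuristic ``crowding versus reachability'' trade-off into a rigorous lower bound on $\Lambda(G)$ valid for \emph{every} graph of diameter $d$, uniform in the degree sequence and sharp enough in its $d$-dependence to meet the construction at $\Theta(d^{3/2}n^{3/2})$. Controlling $n$ and $d$ simultaneously, on both the upper and the lower side, is where I expect essentially all of the work to lie.
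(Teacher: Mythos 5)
A preliminary remark on the comparison: the paper does not prove this statement at all --- it is imported verbatim from \cite{SC19}, and the only ingredient the paper re-uses is the extremal construction, which (as the proof of Corollary~\ref{cor_radius} recalls) is a \emph{rooted tree}. Measured against that, your construction for general $d$ has a genuine gap, not a stylistic difference. Your description is already internally inconsistent (a periphery ``at depth $\approx d/2$'' is incompatible with the formula $1+(d-2)+1=d$, which presupposes depth-$1$ pendants), and the version you analyze --- a gadget $G'$ on $c$ vertices of diameter $d-2$ with almost all pairs diametral, carrying $n/c$ pendants on every gadget vertex --- cannot work. If two hubs $u,v$ satisfy $d_{G'}(u,v)=d-2-s$, then every one of the $(n/c)^2$ pendant pairs above them has deficit $s$, so $\Lambda(G)\ge (n/c)^2\,\Lambda_{d-2}(G')$: the gadget's own deficit is \emph{amplified} by $(n/c)^2$. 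Since, by the theorem itself applied to $G'$, $\Lambda_{d-2}(G')=\Omega(d^{3/2}c^{3/2})$, this term is $\Omega(d^{3/2}n^2c^{-1/2})$, which exceeds $d^{3/2}n^{3/2}$ by an unbounded factor whenever $c=o(n)$; and taking $c=\Theta(n)$ both is circular and makes the pendant-to-gadget pairs alone cost $\Omega(n^2)\gg d^{3/2}n^{3/2}$ when $n\gg d^3$. The cure is structural and is what \cite{SC19} does: pendants may hang only on a set of $k$ hubs that are pairwise at distance \emph{exactly} $d-2$ (so the amplified deficit vanishes), and such a hub set must be paid for with a skeleton of $\approx kd/2$ connector vertices --- $k$ legs of length $\approx d/2$ from one center, i.e.\ a spider, with independent sets blown up at the leg ends; the result is a tree. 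The correct trade-off is crowding $\approx dn^2/k$ against the skeleton cost $\approx knd^2$ (each of the $\approx kd/2$ leg vertices carries average deficit $\Theta(d)$ against all $n$ vertices), giving $k=\Theta(\sqrt{n/d})$ and $\Lambda=\Theta(d^{3/2}n^{3/2})$. Your trade-off --- crowding against the hub count $nc$ --- is right for $d=3$ only because there the skeleton \emph{is} the hub clique; extended verbatim it would predict $\min_c\left(dn^2/c+nc\right)=\Theta(\sqrt d\,n^{3/2})$, which contradicts the lower-bound half of the very theorem you are proving. The exponent $3/2$ on $d$ lives entirely in the skeleton term your sketch omits.

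The other half --- $\Lambda(G)=\Omega(d^{3/2}n^{3/2})$ for \emph{every} graph of diameter $d$ --- is not proved in your proposal, and the two ingredients you name do not suffice even for $d=3$. Against a graph with a few vertices of large degree $\Delta_0$ and all other degrees small, reachability gives $\Lambda \ge c\, n^2/\Delta_0$ and crowding gives $\Lambda\ge\binom{\Delta_0}{2}$, and their balance at $\Delta_0\approx n^{2/3}$ yields only $\Lambda=\Omega(n^{4/3})$; the stronger aggregate crowding bound of order $n\Delta$ that your ``near-regular'' balancing implicitly uses is exactly what fails outside the regular regime. So there is a genuine quantitative gap ($n^{4/3}$ versus $n^{3/2}$) that your two lemmas cannot close, and this is not a deferrable technicality: it is the core of the problem of Plesn\'{i}k that \cite{SC19} resolves, and the argument there is substantially more involved than a vertex-by-vertex charging. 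What survives of the proposal is the (correct) reformulation $W=d\binom n2-\Lambda$ with the ball identity, and the (correct) $d=3$ construction; both bounds that constitute the theorem are missing.
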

 As a corollary, we have the following theorem for the same question when considering the radius instead of the diameter.

\begin{cor}\label{cor_radius}
	The maximum Wiener index among graphs of order $n$ and radius $r$ equals $rn^2-r^{3/2}\Theta(n^{3/2}).$ The extremal graphs are trees.
\end{cor}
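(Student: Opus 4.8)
The plan is to reduce the corollary to the quoted diameter result via the elementary inequality $r\le \operatorname{diam}(G)\le 2r$, valid for every graph $G$ with $\rad(G)=r$. First I would record that a radius-$r$ graph of order $n$ has diameter $d$ for some $d$ with $r\le d\le 2r$, so that $W(G)$ is bounded above by the maximum Wiener index over all graphs of order $n$ with diameter exactly $d$.

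For the upper bound I would feed this into the diameter theorem together with a one-line monotonicity argument. For each admissible $d$ the theorem gives $W(G)\le \frac d2 n^2-d^{3/2}\Theta(n^{3/2})$. As $d$ ranges over the finite set $\{r,\dots,2r\}$ and $r$ is fixed, the leading coefficients $\frac d2$ are strictly increasing and separated by gaps of size $\Theta(n^2)$, which dwarf the $\Theta(n^{3/2})$ lower-order terms once $n$ is large. Thus the bound is maximized at $d=2r$, giving $W(G)\le rn^2-(2r)^{3/2}\Theta(n^{3/2})=rn^2-r^{3/2}\Theta(n^{3/2})$, the constant $2^{3/2}$ being absorbed into the $\Theta$.

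For the matching lower bound I would exhibit an admissible radius-$r$ graph attaining this order, namely the extremal construction of the diameter theorem at $d=2r$, which can be taken to be a tree of diameter exactly $2r$. Since a tree satisfies $\rad=\lceil\operatorname{diam}/2\rceil$, such a tree has radius exactly $r$ and is therefore a legitimate competitor in the radius problem; its Wiener index equals $rn^2-r^{3/2}\Theta(n^{3/2})$ by construction. This both matches the upper bound and shows the extremizers may be taken to be trees, as asserted. For orientation one may picture the balanced double broom: two clusters of about $n/2$ leaves joined by a path of length $2r-2$, whose central path vertex has eccentricity $r$ and whose two extreme leaves lie at distance $2r$; this already certifies that radius-$r$ graphs of the correct leading order $rn^2$ exist, although matching the precise $\Theta(r^{3/2}n^{3/2})$ second-order term requires the tuned construction from \cite{SC19}.

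The only genuinely delicate step is the monotonicity in the upper bound: one must be sure that lowering $d$ below $2r$ strictly lowers the bound, i.e. that the $\Theta(n^2)$ drop in the leading term beats the $\Theta(n^{3/2})$ fluctuation of the second-order term. This is where the asymptotic hypothesis ``$n$ large compared with $r$'' is used; everything else is a direct translation of \cite{SC19} under the substitution $d=2r$. A secondary point worth flagging is that the reduction only shows the optimal constructions are trees, and to claim that every exact extremizer is a tree one would additionally note that any radius-$r$ graph of diameter strictly less than $2r$ is non-extremal for large $n$, so that extremizers have diameter exactly $2r$ and hence coincide with the (tree) extremizers of the diameter problem.
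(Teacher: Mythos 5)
Your two bounds are correct and take essentially the same route as the paper: bound the diameter of a radius-$r$ graph by $2r$, feed this into the diameter theorem of \cite{SC19} (your explicit monotonicity-in-$d$ discussion is just a spelled-out version of the paper's ``using $d\le 2r$''), and for the lower bound use the even-diameter construction of \cite{SC19}, which is a tree of diameter $2r$ and hence of radius exactly $r$. One side remark is factually wrong, though: the balanced double broom does \emph{not} have the correct leading order. Two clusters of $\approx n/2$ leaves at distance $2r$ give only $\tfrac{n^2}{4}\cdot 2r+O(n^2)\approx \tfrac{r}{2}n^2$, about half of $rn^2$; the correct order requires the spider-type construction with many legs, which is exactly why the tuned tree from \cite{SC19} is needed and not merely for the second-order term.

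The genuine gap is in the claim that the extremal graphs are trees. You argue that extremizers have diameter exactly $2r$ and ``hence coincide with the (tree) extremizers of the diameter problem.'' This inference fails for two separate reasons. First, the quoted diameter theorem is purely asymptotic and characterizes no extremizers; that all diameter-$2r$ extremizers are trees is an additional, unquoted fact (and not a harmless one: for odd diameter $d$ the extremal graphs are provably not trees, since trees of odd diameter only reach average distance about $d-\tfrac12$, cf.\ Table~\ref{table1}). Second, even granting that every diameter-$2r$ extremizer is a tree, your radius-$r$ extremizers maximize $W$ only over the \emph{subset} of diameter-$2r$ graphs having radius $r$; to conclude they are global diameter-$2r$ extremizers you would need the two maxima to agree \emph{exactly}, which $\Theta$-estimates cannot deliver. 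The paper settles the tree claim with an exact argument valid for every $n$, independent of the asymptotics: let $c$ be a center of a radius-$r$ graph $G$ and let $T$ be a breadth-first-search spanning tree rooted at $c$. Then $d_T(c,u)=d_G(c,u)$ for all $u$, so $\ecc_T(c)=r$, while $d_T(u,v)\ge d_G(u,v)$ for all pairs gives $\ecc_T(v)\ge r$ for every $v$; hence $\rad(T)=r$. If $G$ is not a tree, some edge $uv$ of $G$ is absent from $T$, so $d_T(u,v)\ge 2>1=d_G(u,v)$ and $W(T)>W(G)$, whence no non-tree can be extremal. Replacing your final paragraph by this spanning-tree argument closes the gap.
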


\begin{proof}
	A graph with radius $r$ has diameter $d \le 2r$.
	By Theorem $3.1$ of \cite{SC19} and using $d \le 2r$, we have an upper bound of the desired form.
	On the other hand, the construction in Section $3$ of \cite{SC19} for $d$ being even is a rooted tree of radius $r$ with Wiener index of the desired form. 
	Taking a center $c$ of any graph $G$ with radius $r$, we can construct a spanning tree $T$ with a breadth-first search algorithm which has radius $r$ as well. If $G$ was not a tree, then $W(G)<W(T)$, so we conclude extremal graphs are trees.
\end{proof}

In the following two subsections, we derive results in the digraph case.
Here we only consider biconnected digraphs, as otherwise the total distance can be infinite.

\subsection{in terms of order and radius}

\begin{thr}\label{maxmurad}
	When $D$ is a digraph with radius $r$, then the maximal possible Wiener index of a digraph with order $n$ and radius $r$ is of the form $2rn^2-4r^2n+\Theta_r(1).$
	For $n>32r^3$, the extremal digraphs contain $DP_n^{2r}$, a directed cycle of length $d=2r$ with a blow-up of one vertex by an independent set of size $n-d+1,$ as a subdigraph and for small $r$ the extremal digraphs are known.
\end{thr}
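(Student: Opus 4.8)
The plan is to establish matching lower and upper bounds and then read off the extremal digraphs from the equality analysis.

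First I would verify the lower bound by computing $W(DP_n^{2r})$ directly. Writing $DP_n^{2r}$ as the directed cycle $v_0\to v_1\to\cdots\to v_{2r-1}\to v_0$ with $v_0$ blown up into an independent set $A$ of size $m=n-2r+1$, every ordered pair inside $A$ lies at distance exactly $2r$ (one is forced to traverse the whole cycle), every pair $(a,v_k)$ at distance $k$, and every pair $(v_k,a)$ at distance $2r-k$, while the $\Theta_r(1)$ pairs among $v_1,\dots,v_{2r-1}$ contribute $\Theta_r(1)$. Hence
\[
W(DP_n^{2r})=2r\,m(m-1)+2r(2r-1)\,m+\Theta_r(1)=2rn^2-4r^2n+\Theta_r(1).
\]
Since $DP_n^{2r}$ itself has radius $2r-1$, I would then add a bounded number of arcs, for instance $v_r\to v_1$ and $v_{2r-1}\to v_r$, chosen so that $\ecc^+(v_r)=\ecc^-(v_r)=r$. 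One checks that these arcs leave every $A$--$A$, $A$--$v$ and $v$--$A$ distance unchanged, so they lower the radius to exactly $r$ while altering $W$ by only $\Theta_r(1)$; this produces a radius-$r$ digraph attaining the stated value and containing $DP_n^{2r}$ as a subdigraph.

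For the upper bound I would reduce to the diameter. If $\rad(D)=r$ there is a center $c$ with $\ecc^+(c)+\ecc^-(c)=2r$, whence $d(u,v)\le d(u,c)+d(c,v)\le 2r$ for every ordered pair; in particular $D$ has diameter at most $2r$ and the crude bound $W\le 2r\,n(n-1)=2rn^2-2rn$ is immediate. The real work is to sharpen the linear term from $-2rn$ to $-4r^2n$. I would phrase this as a loss estimate: since $2rn(n-1)-W=\sum_{u,v}\bigl(2r-d(u,v)\bigr)$, it suffices to prove that this loss is at least $(4r^2-2r)n-\Theta_r(1)$. In the extremal example the loss is concentrated on the pairs between the bulk $A$ and the $2r-1$ internal path vertices: for such a path vertex $z$ and bulk vertex $w$ the two distances satisfy $d(w,z)+d(z,w)=2r$, so each such unordered pair contributes a fixed loss $2r$, and summing over the $2r-1$ path vertices and all $w$ yields exactly the required $(4r^2-2r)n-\Theta_r(1)$. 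To make this general I would fix a shortest directed path $z_0\to\cdots\to z_{2r}$ realizing a distance-$2r$ pair and exploit the one-sided monotonicity along it, namely that the out-distances $d(w,z_i)$ grow by at most one per step while the in-distances $d(z_i,w)$ decay by at most one per step, forcing every vertex $w$ to be close to a large portion of the path in at least one direction and hence to incur a guaranteed loss against it.

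The step I expect to be the main obstacle is precisely this sharp upper bound. The elementary estimates — each distance at most $2r$, or each eccentricity at most $2r$ summed over vertices — only give a loss of order $\Theta(r)n$ or $\Theta(r^2)n$ with the wrong constant, so extracting the exact coefficient $4r^2$ forces one to locate the unavoidable length-$2r$ bottleneck path together with the $\Theta(n)$-sized far set it joins, the same structural phenomenon that drives the minimum problems of the previous sections; this is also where I expect the hypothesis $n>32r^3$ to enter, guaranteeing that the far set dominates so that the path must be essentially a single blow-up. Once the extremal value is pinned down, the characterization follows from the equality cases of the loss estimate: saturating it forces an independent far set of size $n-\Theta_r(1)$ connected by a directed path of length $2r$, i.e. a copy of $DP_n^{2r}$, while the finitely many boundary configurations for small $r$ can be settled by direct inspection.
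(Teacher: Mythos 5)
Your lower bound is exactly the paper's: the same computation of $W(DP_n^{2r})$, and the same repair of the radius by adding the two arcs $\vec{u_ru_1}$ and $\vec{u_{2r-1}u_r}$ at a cost of $\Theta_r(1)$; your reduction of the upper bound to the diameter (a center $c$ with $\ecc^+(c)+\ecc^-(c)=2r$ forces diameter at most $2r$) is also the paper's first step. The divergence is what happens next. The paper does \emph{not} re-prove the sharp diameter bound: it invokes Theorem $5.1$ of \cite{SC19} (maximum Wiener index of a digraph of order $n$ and diameter $d$ is $dn^2-d^2n+O(d^3)$) for the upper bound, and it obtains the structural statement by rerunning the argument of that theorem, via the quantity $\Delta_u=\sum_{v\neq u}\left(2d-d(u,v)-d(v,u)\right)$, producing two vertices with $\Delta_u=\Delta_v=d^2-d$, $d(u,v)=d(v,u)=2r$, and the forced arcs $\vec{wu_1}$, $\vec{u_{2r-1}w}$. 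You instead attempt to prove the sharp upper bound from scratch, and that attempt has a genuine gap.

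The gap is the claimed guaranteed per-vertex loss against a single fixed shortest path $z_0\to\cdots\to z_{2r}$. The only constraints your sketch uses are the monotonicity inequalities $d(w,z_j)\le d(w,z_i)+(j-i)$ and $d(z_i,w)\le (j-i)+d(z_j,w)$ for $i<j$, together with all distances being at most $2r$; these are perfectly consistent with $d(w,z_i)=d(z_i,w)=2r$ for every $i$, i.e.\ with $w$ incurring \emph{zero} loss against the path, and such configurations can actually be realized (already for diameter $2$: route $w$ to the path through one auxiliary vertex and the path back to $w$ through another). So no per-vertex estimate against one fixed path can yield the coefficient $4r^2$. The loss of such a vertex $w$ is real but sits on other pairs (the intermediate vertices of $w$'s own shortest paths), and the bound that this local argument does give, namely $\sum_{y}\left(2r-d(w,y)\right)\ge\binom{2r}{2}$ and $\sum_{y}\left(2r-d(y,w)\right)\ge\binom{2r}{2}$ for each $w$, only yields a total loss of $(2r^2-r)n$ once you notice that summing the source losses over all $w$ already counts every ordered pair exactly once --- a factor $2$ short of the required $(4r^2-2r)n$. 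Closing that factor is precisely the content of Theorem $5.1$ of \cite{SC19}: one must show that the long pairs essentially all route through a \emph{common} set of $2r-1$ intermediate vertices, which then each carry an extra $\Theta(n)$ of loss; this is a global bootstrapping argument, not a per-vertex one, and it is where a threshold such as $n\ge 4d^3$ enters. Your characterization of the extremal digraphs is read off the unproven loss estimate, so it inherits the gap; the economical fix is the paper's route of citing the diameter theorem and adapting its equality analysis.
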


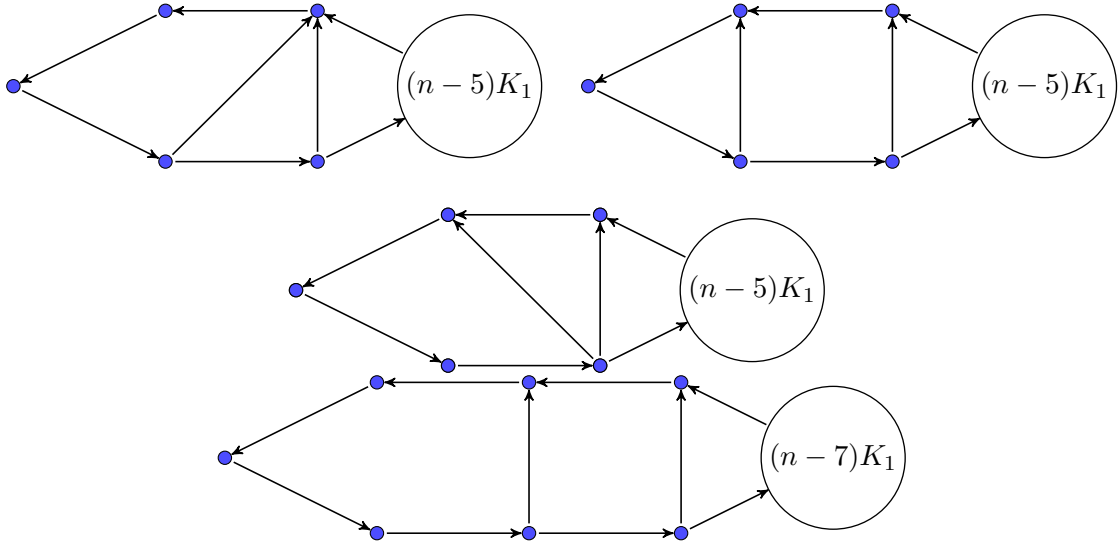
\begin{figure}[h]
	\centering
	\begin{tikzpicture}
	\definecolor{cv0}{rgb}{0.0,0.0,0.0}
	\definecolor{c}{rgb}{1.0,1.0,1.0}
	
	%
%	\Vertex[L=\hbox{$1$},x=4cm,y=1cm]{v0}
%	\Vertex[L=\hbox{$2$},x=2.0cm,y=1]{v1}
%	\Vertex[L=\hbox{$3$},x=0cm,y=0cm]{v2}
%	\Vertex[L=\hbox{$4$},x=2cm,y=-1cm]{v3}
%	\Vertex[L=\hbox{$5$},x=4cm,y=-1]{v4}
	\Vertex[L=\hbox{$(n-5)K_1$},x=6cm,y=0.0cm]{v5}
	
\node[label=center:\(\),fill=ududff,style={minimum size=5,shape=circle}] (v0) at (4,1) {};
\node[label=center:\(\),fill=ududff,style={minimum size=5}] (v1) at (2,1) {};
\node[label=center:\(\),fill=ududff,style={minimum size=5,shape=circle}] (v2) at (0,0) {};
\node[label=center:\(\),fill=ududff,style={minimum size=5,shape=circle}] (v3) at (2,-1) {};
\node[label=center:\(\),fill=ududff,style={minimum size=5,shape=circle}] (v4) at (4,-1) {};
	
	\Edge[lw=0.1cm,style={post, right}](v0)(v1)
	\Edge[lw=0.1cm,style={post, right}](v1)(v2)
	\Edge[lw=0.1cm,style={post, right}](v2)(v3)
	\Edge[lw=0.1cm,style={post, right}](v3)(v0)
	\Edge[lw=0.1cm,style={post, right}](v3)(v4)
	\Edge[lw=0.1cm,style={post, right}](v4)(v0)
	\Edge[lw=0.1cm,style={post, right}](v4)(v5)
	\Edge[lw=0.1cm,style={post, right}](v5)(v0)
	\end{tikzpicture}
	\quad
	\centering
	\begin{tikzpicture}
	\definecolor{cv0}{rgb}{0.0,0.0,0.0}
	\definecolor{c}{rgb}{1.0,1.0,1.0}

\node[label=center:\(\),fill=ududff,style={minimum size=5,shape=circle}] (v0) at (4,1) {};
\node[label=center:\(\),fill=ududff,style={minimum size=5}] (v1) at (2,1) {};
\node[label=center:\(\),fill=ududff,style={minimum size=5,shape=circle}] (v2) at (0,0) {};
\node[label=center:\(\),fill=ududff,style={minimum size=5,shape=circle}] (v3) at (2,-1) {};
\node[label=center:\(\),fill=ududff,style={minimum size=5,shape=circle}] (v4) at (4,-1) {};
	\Vertex[L=\hbox{$(n-5)K_1$},x=6cm,y=0.0cm]{v5}
	
	\Edge[lw=0.1cm,style={post, right}](v0)(v1)
	\Edge[lw=0.1cm,style={post, right}](v1)(v2)
	\Edge[lw=0.1cm,style={post, right}](v2)(v3)
	\Edge[lw=0.1cm,style={post, right}](v3)(v1)
	\Edge[lw=0.1cm,style={post, right}](v3)(v4)
	\Edge[lw=0.1cm,style={post, right}](v4)(v0)
	\Edge[lw=0.1cm,style={post, right}](v4)(v5)
	\Edge[lw=0.1cm,style={post, right}](v5)(v0)
	\end{tikzpicture}
	
	\quad
	\centering	
	
	\begin{tikzpicture}
\node[label=center:\(\),fill=ududff,style={minimum size=5,shape=circle}] (v0) at (4,1) {};
\node[label=center:\(\),fill=ududff,style={minimum size=5}] (v1) at (2,1) {};
\node[label=center:\(\),fill=ududff,style={minimum size=5,shape=circle}] (v2) at (0,0) {};
\node[label=center:\(\),fill=ududff,style={minimum size=5,shape=circle}] (v3) at (2,-1) {};
\node[label=center:\(\),fill=ududff,style={minimum size=5,shape=circle}] (v4) at (4,-1) {};

\node[label=center:\(\),fill=ududff,style={minimum size=5,shape=circle}] (v0) at (4,1) {};
\node[label=center:\(\),fill=ududff,style={minimum size=5}] (v1) at (2,1) {};
\node[label=center:\(\),fill=ududff,style={minimum size=5,shape=circle}] (v2) at (0,0) {};
\node[label=center:\(\),fill=ududff,style={minimum size=5,shape=circle}] (v3) at (2,-1) {};
\node[label=center:\(\),fill=ududff,style={minimum size=5,shape=circle}] (v4) at (4,-1) {};
	\Vertex[L=\hbox{$(n-5)K_1$},x=6cm,y=0.0cm]{v5}
	
	\Edge[lw=0.1cm,style={post, right}](v0)(v1)
	\Edge[lw=0.1cm,style={post, right}](v1)(v2)
	\Edge[lw=0.1cm,style={post, right}](v2)(v3)
	\Edge[lw=0.1cm,style={post, right}](v4)(v1)
	\Edge[lw=0.1cm,style={post, right}](v3)(v4)
	\Edge[lw=0.1cm,style={post, right}](v4)(v0)
	\Edge[lw=0.1cm,style={post, right}](v4)(v5)
	\Edge[lw=0.1cm,style={post, right}](v5)(v0)
	\end{tikzpicture}
	\quad
	\centering

	\begin{tikzpicture}
	\definecolor{cv0}{rgb}{0.0,0.0,0.0}
	\definecolor{c}{rgb}{1.0,1.0,1.0}

	\node[label=center:\(\),fill=ududff,style={minimum size=5}] (v1) at (6,1) {};
	\node[label=center:\(\),fill=ududff,style={minimum size=5}] (v2) at (4,1) {};
	\node[label=center:\(\),fill=ududff,style={minimum size=5}] (v3) at (2,1) {};
	\node[label=center:\(\),fill=ududff,style={minimum size=5}] (v4) at (0,0) {};
	\node[label=center:\(\),fill=ududff,style={minimum size=5}] (v5) at (2,-1) {};
	\node[label=center:\(\),fill=ududff,style={minimum size=5}] (v6) at (4,-1) {};
	\node[label=center:\(\),fill=ududff,style={minimum size=5}] (v7) at (6,-1) {};

	\Vertex[L=\hbox{$(n-7)K_1$},x=8cm,y=0.0cm]{v0}
	
	\Edge[lw=0.1cm,style={post, right}](v0)(v1)
	\Edge[lw=0.1cm,style={post, right}](v1)(v2)
	\Edge[lw=0.1cm,style={post, right}](v2)(v3)
	\Edge[lw=0.1cm,style={post, right}](v3)(v4)
	\Edge[lw=0.1cm,style={post, right}](v4)(v5)
	\Edge[lw=0.1cm,style={post, right}](v5)(v6)
	\Edge[lw=0.1cm,style={post, right}](v6)(v7)
	
	\Edge[lw=0.1cm,style={post, right}](v7)(v0)
	
	\Edge[lw=0.1cm,style={post, right}](v7)(v1)
	\Edge[lw=0.1cm,style={post, right}](v6)(v2)

	\end{tikzpicture}
	\caption{Extremal digraphs with $\rad=r\in \{3,4\}$ and maximum Wiener index}
	\label{fig:digraph_rad_maxW_34}
\end{figure}

\begin{proof}
	Take a center $x$ such that $d(x,V)+d(V,x)=2r$. %or $\max\{d(x,V),d(V,x)\}=r$.
	Then for any $2$ vertices $u,v$, we have $d(u,v) \le d(u,x)+d(x,v) \le 2r$ by the triangle inequality and the definition of $x$. Hence the diameter of the digraph is at most $2r$
%	Again, we have that the diameter of such a digraph is at most $d=2r$ 
	and thus $W(D) \le 2rn^2-4r^2n+O(r^3)$ due to Theorem $5.1$ of \cite{SC19}.
	Let $D'$ be the digraph formed from adding the two additional directed edges $\vec{u_r u_1}$ and $\vec{u_{2r-1}u_r}$ to $DP_n^{2r}$.
	Then $\rad(D')=r$ and \begin{align*}
	W(D')&\ge 2r(n-2r+1)(n-2r)+2r(2r-1)(n-2r+1)+2W(C_{r})\\&
	=2rn^2-4r^2n+2r(2r-1)+r^3-r^2\\&=2rn^2-4r^2n+\Omega(r^3), 
	\end{align*}
	%exact: 2r(r-1)^2 erbij
	from which the first part of the theorem follows.
	
	Let $d=2r$ and define $$\Delta_u=\sum_{v \in V: v \not=u} \left( 2d-d(u,v)-d(v,u) \right).$$
	Analogously to the proof of Theorem $5.1$ in \cite{SC19}, we find for $n \ge 4d^3$ that there exist two vertices $u$ and $v$ with $\Delta_v=\Delta_u=d^2-d$ and $d(u,v)=d(v,u)=2r$ and given the shortest path $u u_1 u_2 u_3 \ldots u_{d-1} v$, we again see that for any other vertex $w$ there are the directed edges $\vec{wu_1}$ and $\vec{u_{2r-1}w}$, as well as the directed edges $\vec{u_{2r-1}u}$ and $\vec{vu_1}.$
	
	For $r=1$, the extremal digraphs are exactly $DP_n^2.$
	For $r\ge 2$, we need to add some edges $\vec{u_i u_j}$ where $i>j$ such that $d(u_r,V)=d(V,u_r)=r.$
	For $r=2$, there are $3$ possible directed edges to add and so it is easy to see that we only need to add $\vec{u_3 u_1}$ to get the extremal digraph.
	Using a computer program, we found the optimal ways to do this to get the maximal Wiener index for $r\le 4$.
	They are presented in Figure~\ref{fig:digraph_rad_maxW_34}.			
	\end{proof}

For small $r$, the extremal digraphs are formed by adding some small number $k$ directed edges to $DP_n^{2r}$ between the $2r-1$ vertices from the cycle different from the vertex which is used in the blow-up process. 
For $5 \le r \le 7$, we used a computer program to find the constructions which achieve the first local maximum of the total distance as a function of $k$.
We conjecture these digraphs to be extremal for large $n$ and present them in Figure~\ref{fig:digraph_rad_maxW}.

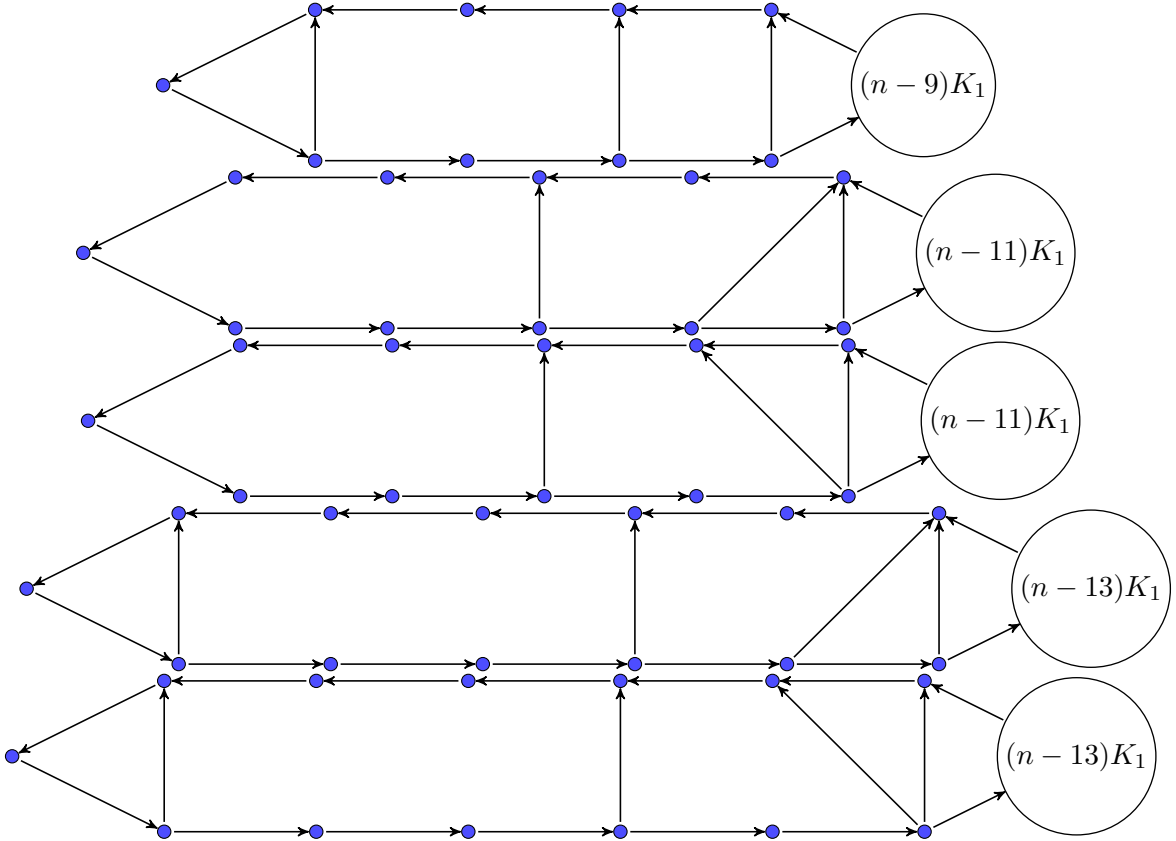
\begin{figure}[h]
\centering

\begin{tikzpicture}
\definecolor{cv0}{rgb}{0.0,0.0,0.0}
\definecolor{c}{rgb}{1.0,1.0,1.0}

\node[label=center:\(\),fill=ududff,style={minimum size=5}] (v1) at (8,1) {};
\node[label=center:\(\),fill=ududff,style={minimum size=5}] (v2) at (6,1) {};
\node[label=center:\(\),fill=ududff,style={minimum size=5}] (v3) at (4,1) {};
\node[label=center:\(\),fill=ududff,style={minimum size=5}] (v4) at (2,1) {};
\node[label=center:\(\),fill=ududff,style={minimum size=5}] (v5) at (0,0) {};
\node[label=center:\(\),fill=ududff,style={minimum size=5}] (v6) at (2,-1) {};
\node[label=center:\(\),fill=ududff,style={minimum size=5}] (v7) at (4,-1) {};
\node[label=center:\(\),fill=ududff,style={minimum size=5}] (v8) at (6,-1) {};
\node[label=center:\(\),fill=ududff,style={minimum size=5}] (v9) at (8,-1) {};

%\Vertex[L=\hbox{$1$},x=8,y=1]{v1}
%\Vertex[L=\hbox{$2$},x=6,y=1]{v2}
%\Vertex[L=\hbox{$3$},x=4,y=1]{v3}
%\Vertex[L=\hbox{$4$},x=2,y=1]{v4}
%\Vertex[L=\hbox{$5$},x=0,y=0]{v5}
%\Vertex[L=\hbox{$6$},x=2,y=-1]{v6}
%\Vertex[L=\hbox{$7$},x=4,y=-1]{v7}
%\Vertex[L=\hbox{$8$},x=6,y=-1]{v8}
%\Vertex[L=\hbox{$9$},x=8,y=-1]{v9}

\Vertex[L=\hbox{$(n-9)K_1$},x=10cm,y=0.0cm]{v0}

\Edge[lw=0.1cm,style={post, right}](v0)(v1)
\Edge[lw=0.1cm,style={post, right}](v1)(v2)
\Edge[lw=0.1cm,style={post, right}](v2)(v3)
\Edge[lw=0.1cm,style={post, right}](v3)(v4)
\Edge[lw=0.1cm,style={post, right}](v4)(v5)
\Edge[lw=0.1cm,style={post, right}](v5)(v6)
\Edge[lw=0.1cm,style={post, right}](v6)(v7)
\Edge[lw=0.1cm,style={post, right}](v7)(v8)
\Edge[lw=0.1cm,style={post, right}](v8)(v9)

\Edge[lw=0.1cm,style={post, right}](v9)(v0)

\Edge[lw=0.1cm,style={post, right}](v9)(v1)
\Edge[lw=0.1cm,style={post, right}](v8)(v2)
\Edge[lw=0.1cm,style={post, right}](v6)(v4)

\end{tikzpicture}
\quad
\centering

\begin{tikzpicture}
\definecolor{cv0}{rgb}{0.0,0.0,0.0}
\definecolor{c}{rgb}{1.0,1.0,1.0}

\node[label=center:\(\),fill=ududff,style={minimum size=5}] (v1) at (10,1) {};
\node[label=center:\(\),fill=ududff,style={minimum size=5}] (v2) at (8,1) {};
\node[label=center:\(\),fill=ududff,style={minimum size=5}] (v3) at (6,1) {};
\node[label=center:\(\),fill=ududff,style={minimum size=5}] (v4) at (4,1) {};
\node[label=center:\(\),fill=ududff,style={minimum size=5}] (v5) at (2,1) {};
\node[label=center:\(\),fill=ududff,style={minimum size=5}] (v6) at (0,0) {};
\node[label=center:\(\),fill=ududff,style={minimum size=5}] (v7) at (2,-1) {};
\node[label=center:\(\),fill=ududff,style={minimum size=5}] (v8) at (4,-1) {};
\node[label=center:\(\),fill=ududff,style={minimum size=5}] (v9) at (6,-1) {};
\node[label=center:\(\),fill=ududff,style={minimum size=5}] (v10) at (8,-1) {};
\node[label=center:\(\),fill=ududff,style={minimum size=5}] (v11) at (10,-1) {};

\Vertex[L=\hbox{$(n-11)K_1$},x=12cm,y=0.0cm]{v0}

\Edge[lw=0.1cm,style={post, right}](v0)(v1)
\Edge[lw=0.1cm,style={post, right}](v1)(v2)
\Edge[lw=0.1cm,style={post, right}](v2)(v3)
\Edge[lw=0.1cm,style={post, right}](v3)(v4)
\Edge[lw=0.1cm,style={post, right}](v4)(v5)
\Edge[lw=0.1cm,style={post, right}](v5)(v6)
\Edge[lw=0.1cm,style={post, right}](v6)(v7)
\Edge[lw=0.1cm,style={post, right}](v7)(v8)
\Edge[lw=0.1cm,style={post, right}](v8)(v9)
\Edge[lw=0.1cm,style={post, right}](v9)(v10)
\Edge[lw=0.1cm,style={post, right}](v10)(v11)

\Edge[lw=0.1cm,style={post, right}](v11)(v0)

\Edge[lw=0.1cm,style={post, right}](v11)(v1)
\Edge[lw=0.1cm,style={post, right}](v9)(v3)

%\Edge[lw=0.1cm,style={post, right}](v11)(v2)
\Edge[lw=0.1cm,style={post, right}](v10)(v1)

\end{tikzpicture}
\quad
\centering

\begin{tikzpicture}

\node[label=center:\(\),fill=ududff,style={minimum size=5}] (v1) at (10,1) {};
\node[label=center:\(\),fill=ududff,style={minimum size=5}] (v2) at (8,1) {};
\node[label=center:\(\),fill=ududff,style={minimum size=5}] (v3) at (6,1) {};
\node[label=center:\(\),fill=ududff,style={minimum size=5}] (v4) at (4,1) {};
\node[label=center:\(\),fill=ududff,style={minimum size=5}] (v5) at (2,1) {};
\node[label=center:\(\),fill=ududff,style={minimum size=5}] (v6) at (0,0) {};
\node[label=center:\(\),fill=ududff,style={minimum size=5}] (v7) at (2,-1) {};
\node[label=center:\(\),fill=ududff,style={minimum size=5}] (v8) at (4,-1) {};
\node[label=center:\(\),fill=ududff,style={minimum size=5}] (v9) at (6,-1) {};
\node[label=center:\(\),fill=ududff,style={minimum size=5}] (v10) at (8,-1) {};
\node[label=center:\(\),fill=ududff,style={minimum size=5}] (v11) at (10,-1) {};

%\Vertex[L=\hbox{$1$},x=10,y=1]{v1}
%\Vertex[L=\hbox{$2$},x=8,y=1]{v2}
%\Vertex[L=\hbox{$3$},x=6,y=1]{v3}
%\Vertex[L=\hbox{$4$},x=4,y=1]{v4}
%\Vertex[L=\hbox{$5$},x=2,y=1]{v5}
%\Vertex[L=\hbox{$6$},x=0,y=0]{v6}
%\Vertex[L=\hbox{$7$},x=2,y=-1]{v7}
%\Vertex[L=\hbox{$8$},x=4,y=-1]{v8}
%\Vertex[L=\hbox{$9$},x=6,y=-1]{v9}
%\Vertex[L=\hbox{$10$},x=8,y=-1]{v10}
%\Vertex[L=\hbox{$11$},x=10,y=-1]{v11}

\Vertex[L=\hbox{$(n-11)K_1$},x=12cm,y=0.0cm]{v0}

\Edge[lw=0.1cm,style={post, right}](v0)(v1)
\Edge[lw=0.1cm,style={post, right}](v1)(v2)
\Edge[lw=0.1cm,style={post, right}](v2)(v3)
\Edge[lw=0.1cm,style={post, right}](v3)(v4)
\Edge[lw=0.1cm,style={post, right}](v4)(v5)
\Edge[lw=0.1cm,style={post, right}](v5)(v6)
\Edge[lw=0.1cm,style={post, right}](v6)(v7)
\Edge[lw=0.1cm,style={post, right}](v7)(v8)
\Edge[lw=0.1cm,style={post, right}](v8)(v9)
\Edge[lw=0.1cm,style={post, right}](v9)(v10)
\Edge[lw=0.1cm,style={post, right}](v10)(v11)

\Edge[lw=0.1cm,style={post, right}](v11)(v0)

\Edge[lw=0.1cm,style={post, right}](v11)(v1)
\Edge[lw=0.1cm,style={post, right}](v9)(v3)

\Edge[lw=0.1cm,style={post, right}](v11)(v2)
%\Edge[lw=0.1cm,style={post, right}](v10)(v1)

\end{tikzpicture}

\quad
\centering	
\begin{tikzpicture}
\definecolor{cv0}{rgb}{0.0,0.0,0.0}
\definecolor{c}{rgb}{1.0,1.0,1.0}
\node[label=center:\(\),fill=ududff,style={minimum size=5}] (v1) at (12,1) {};
\node[label=center:\(\),fill=ududff,style={minimum size=5}] (v2) at (10,1) {};
\node[label=center:\(\),fill=ududff,style={minimum size=5}] (v3) at (8,1) {};
\node[label=center:\(\),fill=ududff,style={minimum size=5}] (v4) at (6,1) {};
\node[label=center:\(\),fill=ududff,style={minimum size=5}] (v5) at (4,1) {};
\node[label=center:\(\),fill=ududff,style={minimum size=5}] (v6) at (2,1) {};
\node[label=center:\(\),fill=ududff,style={minimum size=5}] (v7) at (0,0) {};
\node[label=center:\(\),fill=ududff,style={minimum size=5}] (v8) at (2,-1) {};
\node[label=center:\(\),fill=ududff,style={minimum size=5}] (v9) at (4,-1) {};
\node[label=center:\(\),fill=ududff,style={minimum size=5}] (v10) at (6,-1) {};
\node[label=center:\(\),fill=ududff,style={minimum size=5}] (v11) at (8,-1) {};
\node[label=center:\(\),fill=ududff,style={minimum size=5}] (v12) at (10,-1) {};
\node[label=center:\(\),fill=ududff,style={minimum size=5}] (v13) at (12,-1) {};

\Vertex[L=\hbox{$(n-13)K_1$},x=14cm,y=0.0cm]{v0}

\Edge[lw=0.1cm,style={post, right}](v0)(v1)
\Edge[lw=0.1cm,style={post, right}](v1)(v2)
\Edge[lw=0.1cm,style={post, right}](v2)(v3)
\Edge[lw=0.1cm,style={post, right}](v3)(v4)
\Edge[lw=0.1cm,style={post, right}](v4)(v5)
\Edge[lw=0.1cm,style={post, right}](v5)(v6)
\Edge[lw=0.1cm,style={post, right}](v6)(v7)
\Edge[lw=0.1cm,style={post, right}](v7)(v8)
\Edge[lw=0.1cm,style={post, right}](v8)(v9)
\Edge[lw=0.1cm,style={post, right}](v9)(v10)
\Edge[lw=0.1cm,style={post, right}](v10)(v11)
\Edge[lw=0.1cm,style={post, right}](v11)(v12)
\Edge[lw=0.1cm,style={post, right}](v12)(v13)

\Edge[lw=0.1cm,style={post, right}](v13)(v0)

\Edge[lw=0.1cm,style={post, right}](v13)(v1)
\Edge[lw=0.1cm,style={post, right}](v11)(v3)
\Edge[lw=0.1cm,style={post, right}](v8)(v6)

%\Edge[lw=0.1cm,style={post, right}](v13)(v2)
\Edge[lw=0.1cm,style={post, right}](v12)(v1)

\end{tikzpicture}
\quad
\centering	
\begin{tikzpicture}
\definecolor{cv0}{rgb}{0.0,0.0,0.0}
\definecolor{c}{rgb}{1.0,1.0,1.0}

\node[label=center:\(\),fill=ududff,style={minimum size=5}] (v1) at (12,1) {};
\node[label=center:\(\),fill=ududff,style={minimum size=5}] (v2) at (10,1) {};
\node[label=center:\(\),fill=ududff,style={minimum size=5}] (v3) at (8,1) {};
\node[label=center:\(\),fill=ududff,style={minimum size=5}] (v4) at (6,1) {};
\node[label=center:\(\),fill=ududff,style={minimum size=5}] (v5) at (4,1) {};
\node[label=center:\(\),fill=ududff,style={minimum size=5}] (v6) at (2,1) {};
\node[label=center:\(\),fill=ududff,style={minimum size=5}] (v7) at (0,0) {};
\node[label=center:\(\),fill=ududff,style={minimum size=5}] (v8) at (2,-1) {};
\node[label=center:\(\),fill=ududff,style={minimum size=5}] (v9) at (4,-1) {};
\node[label=center:\(\),fill=ududff,style={minimum size=5}] (v10) at (6,-1) {};
\node[label=center:\(\),fill=ududff,style={minimum size=5}] (v11) at (8,-1) {};
\node[label=center:\(\),fill=ududff,style={minimum size=5}] (v12) at (10,-1) {};
\node[label=center:\(\),fill=ududff,style={minimum size=5}] (v13) at (12,-1) {};

%
%\Vertex[L=\hbox{$1$},x=12,y=1]{v1}
%\Vertex[L=\hbox{$2$},x=10,y=1]{v2}
%\Vertex[L=\hbox{$3$},x=8,y=1]{v3}
%\Vertex[L=\hbox{$4$},x=6,y=1]{v4}
%\Vertex[L=\hbox{$5$},x=4,y=1]{v5}
%\Vertex[L=\hbox{$6$},x=2,y=1]{v6}
%\Vertex[L=\hbox{$7$},x=0,y=0]{v7}
%\Vertex[L=\hbox{$8$},x=2,y=-1]{v8}
%\Vertex[L=\hbox{$9$},x=4,y=-1]{v9}
%\Vertex[L=\hbox{$10$},x=6,y=-1]{v10}
%\Vertex[L=\hbox{$11$},x=8,y=-1]{v11}
%\Vertex[L=\hbox{$12$},x=10,y=-1]{v12}
%\Vertex[L=\hbox{$13$},x=12,y=-1]{v13}

\Vertex[L=\hbox{$(n-13)K_1$},x=14cm,y=0.0cm]{v0}

\Edge[lw=0.1cm,style={post, right}](v0)(v1)
\Edge[lw=0.1cm,style={post, right}](v1)(v2)
\Edge[lw=0.1cm,style={post, right}](v2)(v3)
\Edge[lw=0.1cm,style={post, right}](v3)(v4)
\Edge[lw=0.1cm,style={post, right}](v4)(v5)
\Edge[lw=0.1cm,style={post, right}](v5)(v6)
\Edge[lw=0.1cm,style={post, right}](v6)(v7)
\Edge[lw=0.1cm,style={post, right}](v7)(v8)
\Edge[lw=0.1cm,style={post, right}](v8)(v9)
\Edge[lw=0.1cm,style={post, right}](v9)(v10)
\Edge[lw=0.1cm,style={post, right}](v10)(v11)
\Edge[lw=0.1cm,style={post, right}](v11)(v12)
\Edge[lw=0.1cm,style={post, right}](v12)(v13)

\Edge[lw=0.1cm,style={post, right}](v13)(v0)

\Edge[lw=0.1cm,style={post, right}](v13)(v1)
\Edge[lw=0.1cm,style={post, right}](v11)(v3)
\Edge[lw=0.1cm,style={post, right}](v8)(v6)

\Edge[lw=0.1cm,style={post, right}](v13)(v2)
%\Edge[lw=0.1cm,style={post, right}](v12)(v1)

\end{tikzpicture}

\caption{Conjectured extremal digraphs with $\rad=r\in \{5,6,7\}$ and maximum Wiener index}
\label{fig:digraph_rad_maxW}
\end{figure}

\subsection{in terms of order and in- or out-radius}

We first note that the extremal value will be the same in both cases. Changing all direction of the edges of a digraph $D$ with in-radius $r$ results in a digraph $D'$ with out-radius $r$ and vice versa, since $d_D(u,v)=d_{D'}(v,u)$ for every two vertices, the Wiener indices $W(D)$ and $W(D')$ will be the same.
Hence it suffices to prove this question for the out-radius $\rad^+$.
We first prove an asymptotic result.

\begin{thr}\label{maxmurad+}
	Given a digraph $D$ of order $n$ with $\rad^+=r>1$. Then the Wiener index is at most $ \frac{n^3}{3} + (r-1)\Theta(n^2) $.

\end{thr}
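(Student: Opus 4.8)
The plan is to collapse the whole double sum onto the profile of in-distances to a single center. First I would fix a vertex $x$ with $\ecc^+(x)=r$ (which exists by the definition of $\rad^+$) and set $b_u=d(u,x)$ for every $u$; biconnectivity makes each $b_u$ finite, and $b_x=0$. The heart of the argument is a bound on the out-transmission $\sigma^+(u)=\sum_{v\neq u}d(u,v)$ in terms of $b_u$ alone. A shortest $u\to x$ path contains, besides $u$, exactly $b_u$ vertices, sitting at out-distances $1,2,\dots,b_u$ from $u$ and hence contributing precisely $\binom{b_u+1}{2}$; every one of the remaining $n-1-b_u$ vertices $v$ obeys $d(u,v)\le d(u,x)+d(x,v)\le b_u+r$ by routing through $x$. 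This yields $\sigma^+(u)\le h(b_u)$, where $h(b):=\binom{b+1}{2}+(n-1-b)(b+r)$.

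Next I would linearise the bound to make the summation painless. Expanding gives $h(b)=-\tfrac12 b^2+(n-r-\tfrac12)b+(n-1)r$, and since $(r+\tfrac12)b\ge 0$ we get $h(b)\le \phi(b)+(n-1)r$ with $\phi(b):=nb-\tfrac12 b^2$. The point of this reformulation is that $\phi'(b)=n-b>0$ for all $0\le b\le n-1$, so $\phi$ is strictly increasing on the entire range of the $b_u$; the non-monotonicity of $h$ near its peak has been pushed into the additive constant $(n-1)r$.

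The combinatorial input enters in the summation. The numbers $b_u$ are the depths of the vertices in a breadth-first search reaching $x$ along reversed arcs, so they satisfy a no-gap property: writing the sorted values as $\beta_1\le\cdots\le\beta_n$, every depth below the maximum is occupied and only $x$ has depth $0$, which forces $\beta_i\le i-1$ for each $i$ (at least $\beta_i$ vertices lie in the strictly lower depths $0,\dots,\beta_i-1$, yet at most $i-1$ precede position $i$). Because $\phi$ is increasing, $\sum_u\phi(b_u)=\sum_i\phi(\beta_i)\le\sum_{i=1}^n\phi(i-1)=\sum_{b=0}^{n-1}\bigl(nb-\tfrac12 b^2\bigr)=\tfrac{n^3}{3}+O(n^2)$, the maximum being attained by the in-path profile $b_u=0,1,\dots,n-1$. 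Assembling the pieces, $W(D)=\sum_u\sigma^+(u)\le\sum_u h(b_u)\le \tfrac{n^3}{3}+O(n^2)+n(n-1)r=\tfrac{n^3}{3}+(r-1)\Theta(n^2)$.

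The main obstacle is the per-source estimate: one has to resist bounding every $d(u,v)$ crudely by $b_u+r$ (which only gives the weaker $\tfrac{n^3}{2}$), and instead record that all but $b_u$ of the targets are already reached while tracing the unavoidable shortest path to the center, which is exactly what converts the wasteful $\tfrac{n^3}{2}$ into the correct $\tfrac{n^3}{3}$. A secondary point to check is that the error term is genuinely of order $(r-1)n^2$; for sharpness I would exhibit a matching construction, a long directed path feeding into $x$ augmented by $O(r)$ shortcut arcs out of $x$ so that $\ecc^+(x)=r$ while the in-distances stay as large as possible, which attains $\tfrac{n^3}{3}+\Theta(rn^2)$.
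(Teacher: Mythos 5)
Your upper-bound argument is correct, and it is in essence the paper's own proof. The paper establishes the case $r=1$ (Theorem~\ref{rad+=1}) by exactly your per-source decomposition: the shortest path from $u$ to the center contributes the exact distances $1,\dots,b_u$, and every off-path target is bounded through the center by the triangle inequality; the sum over sources is then controlled by the same no-gap property of the in-distance profile (every depth below the maximum is occupied, so the profile is dominated termwise by $0,1,\dots,n-1$). The general-$r$ bound is deduced there by noting that each off-path term grows by at most $r-1$, giving $W(D)\le\frac{n^3-n}{3}+(r-1)(n^2-n)$. Your linearization $h(b)\le\phi(b)+(n-1)r$ with $\phi(b)=nb-\tfrac12 b^2$ increasing on all of $[0,n-1]$ is a slightly tidier way of dispatching the monotonicity issue (the paper instead tracks where $f(i)=\tfrac12(2n-i-2)(i+1)$ increases), and your constant is marginally worse (roughly $(r-\tfrac14)n^2$ versus $(r-1)n^2$), but for integer $r\ge 2$ both have the stated form.

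The one genuine error is in your sharpness construction, which is needed because the $\Theta$ in the statement is precisely the claim that the second-order term cannot be improved. With only $O(r)$ shortcut arcs out of $x$ you cannot have $\ecc^+(x)=r$ once $n$ is large: in a path-plus-shortcuts digraph, each arc $x\to v_j$ brings only the $r$ consecutive path vertices $v_j,v_{j+1},\dots,v_{j+r-1}$ within out-distance $r$ of $x$, so at least $\lceil (n-1)/r\rceil$ shortcuts are required, and $O(r)$ of them cover only $O(r^2)$ vertices. The paper's construction (Figure~\ref{fig:digraph_rad+r}) uses exactly the corrected count: a directed path $v_1\to v_2\to\cdots\to v_{n-1}\to v$ together with arcs from $v$ to $v_1,v_{r+1},v_{2r+1},\dots$, i.e.\ about $n/r$ shortcuts spaced $r$ apart, for which a direct computation gives $W=\frac{n^3}{3}+\frac{r-1}{4}n^2-O_r(n)$. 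Your construction idea (a long path feeding the center, shortcuts leaving it, in-distances kept maximal) is the right one, but the number of shortcuts must be $\Theta(n/r)$, not $O(r)$.
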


\begin{proof}
	Analogous to the proof of Theorem~\ref{rad+=1}, we find that $W(D) \le \frac{n^3-n}{3}+(r-1)(n^2-n).$
	For this, note that $d(v,x)$ has been increased by at most $r-1$ and so has every term $d(u,x)$.

	To prove the bound of this theorem is sharp, take the digraph $D$ in Figure~\ref{fig:digraph_rad+r} which has out-radius $r$, where $n=qr+k$ with $2 \le k \le r+1.$ 
	For every pair of vertices $v_{pr+i}$ and $u$ (where $1 \le i \le r$ and  $u=v_j$ for some $j>pr+i$ or $u=v$), we have $d(v_{pr+i}, u )+d(u, v_{pr+i})=n-pr$.
	The Wiener index of this graph equals

	\begin{align*} 
	W(D) &=\sum_{p=0}^{q-1} \sum_{i=1}^r (n-pr)(n-pr-i) + k \frac{k(k-1)}2 \\
	&=\frac{n^3}{3}+\frac{r-1}4 n^2 -\frac{r(r+3)}{12}n+O_r(1). \qedhere
	\end{align*}
\end{proof}

\begin{figure}[h]
	\centering

	\begin{tikzpicture}[line cap=round,line join=round,>= {Latex[length=3mm, width=1.5mm]} ,x=1.3cm,y=1.3cm]
	\clip(-1.3,-0.5) rectangle (10,2.5);
	\draw [->,line width=1.1pt] (5.36,2) -- (0.,0.);
	%\draw [->,line width=1.1pt] (5.36,2.1) -- (1.,0.);
	%\draw [->,line width=1.1pt] (5.36,2.1) -- (2.,0.);
	%\draw [->,line width=1.1pt] (5.36,2.1) -- (3.,0.);
	\draw [->,line width=1.1pt] (5.36,2) -- (4.,0.);
	%\draw [->,line width=1.1pt] (5.36,2.1) -- (5.,0.);
	\draw [->,line width=1.1pt] (5.36,2) -- (7.5,0.);
	\draw [<-,line width=1.1pt] (5.36,2) --(9.5,2)-- (9.5,0.);
	%\draw [->,line width=1.1pt] (11.,0.) -- (5.36,2.1) -- (5.36,2.1);
	\draw [->,line width=1.1pt] (0.,0.) -- (1.,0.);
	\draw [->,line width=1.1pt] (1.,0.) -- (2.,0.);
	\draw [line width=1.pt,dotted] (2.,0.) -- (3.,0.);
	\draw [->,line width=1.1pt] (3.,0.) -- (4.,0.);
	\draw [->,line width=1.1pt] (4.,0.) -- (5.,0.);
	\draw [line width=1.pt,dotted] (5.,0.)-- (6.5,0.);
	\draw [->,line width=1.1pt] (6.5,0.) -- (7.5,0.);
	\draw [line width=1.pt,dotted] (8.5,0.)-- (9.5,0.);
	\draw [->,line width=1.1pt] (7.5,0.) -- (8.5,0.);
	%\draw [->,line width=1.1pt] (8.5,0.) -- (9.5,0.);
	\begin{scriptsize}
	\draw [fill=ududff] (0.,0.) circle (2.5pt);
	\draw [fill=ududff] (5.36,2) circle (2.5pt);
	\draw [fill=xdxdff] (1.,0.) circle (2.5pt);
	\draw [fill=xdxdff] (2.,0.) circle (2.5pt);
	\draw [fill=ududff] (3.,0.) circle (2.5pt);
	\draw [fill=xdxdff] (4.,0.) circle (2.5pt);
	\draw [fill=xdxdff] (5.,0.) circle (2.5pt);
	\draw [fill=xdxdff] (6.5,0.) circle (2.5pt);
	\draw [fill=xdxdff] (7.5,0.) circle (2.5pt);
	\draw [fill=xdxdff] (8.5,0.) circle (2.5pt);
	\draw [fill=xdxdff] (9.5,0.) circle (2.5pt);
	
	\coordinate [label=center:$v$] (A) at (5.36,2.2); 
	\coordinate [label=center:$v_{n-1}$] (A) at (9.5,-0.2); 
	\coordinate [label=center:$v_{qr+2}$] (A) at (8.5,-0.2); 
	\coordinate [label=center:$v_{qr+1}$] (A) at (7.5,-0.20); 
	\coordinate [label=center:$v_{qr}$] (A) at (6.5,-0.2); 
	
	\coordinate [label=center:$v_{r}$] (A) at (3,-0.20); 
	\coordinate [label=center:$v_{3}$] (A) at (2,-0.20); 
	\coordinate [label=center:$v_{2}$] (A) at (1,-0.20); 
	\coordinate [label=center:$v_{1}$] (A) at (0,-0.20);
	\coordinate [label=center:$v_{r+2}$] (A) at (5,-0.20); 
	\coordinate [label=center:$v_{r+1}$] (A) at (4,-0.20);  
	
	\end{scriptsize}
	\end{tikzpicture}

	\caption{Digraph with $\rad^{+}=r$ and large Wiener index}
	\label{fig:digraph_rad+r}
\end{figure}
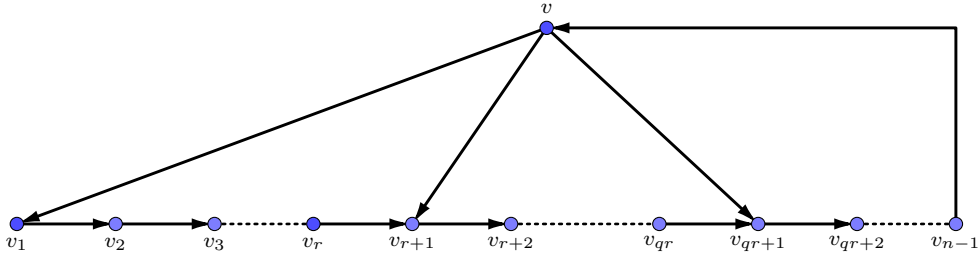

When the out-radius equals one, the problem can be solved exactly.

\begin{thr}\label{rad+=1}
	Given a digraph $D$ of order $n$ with outradius equal to $1$, the Wiener index of $D$ is at most $\frac{n^3-n}3 $ with equality if and only if the graph is isomorphic to one of the two configurations given in Figure~\ref{fig:digraph_rad+}.
\end{thr}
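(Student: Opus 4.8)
The plan is to exploit the fact that out-radius $1$ forces a dominating centre, and then to bound the out-distances from each vertex in terms of its in-distance to that centre. Let $c$ be a centre, so that (being forced by $\rad^+=1$) we have $d(c,v)=1$ for every $v\neq c$; since the digraph is biconnected, $p(u):=d(u,c)$ is finite for all $u$, with $p(c)=0$. Writing $B_t(u)=\{v: d(u,v)\le t\}$ for the out-ball of radius $t$, the centre contributes exactly $\sum_{v} d(c,v)=n-1$, so the work is to control the out-distances from the remaining vertices in a way that correctly couples them to the $p(u)$.

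The key step I would isolate is a lower bound on the out-balls. For $u\neq c$ take a shortest directed path $u=x_0\to x_1\to\cdots\to x_{p(u)}=c$; since suffixes of a shortest path are shortest, $d(u,x_i)=i$, so $x_0,\dots,x_t$ are $t+1$ distinct vertices of $B_t(u)$, giving $|B_t(u)|\ge t+1$ for $0\le t\le p(u)$. Moreover $d(u,v)\le d(u,c)+d(c,v)=p(u)+1$ for every $v$, so $B_{p(u)+1}(u)=V$. Using $\sum_v d(u,v)=\sum_{t\ge 0}\bigl(n-|B_t(u)|\bigr)$, this yields
$$\sum_{v} d(u,v)\le \sum_{t=0}^{p(u)} (n-1-t)=h\bigl(p(u)\bigr),\qquad h(k)=(k+1)(n-1)-\binom{k+1}{2},$$
an inequality that also holds, with equality, for $u=c$. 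I would emphasise that this ball bound is the crucial point: the naive estimate $d(u,v)\le p(u)+1$ alone overcounts badly (it gives only $W\le(n-1)\bigl(\binom n2+n-1\bigr)\sim n^3/2$), whereas incorporating the $t+1$ vertices forced near $u$ is exactly what captures the tension between a long in-path to $c$ and short out-distances along it.

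Summing over all vertices gives $W(D)\le \sum_{u} h\bigl(p(u)\bigr)$, which I would then optimise over the admissible multisets $\{p(u):u\in V\}$. A BFS-layering argument shows the distinct $p$-values form a contiguous block $\{0,1,\dots,m\}$, with $0$ of multiplicity exactly one and each value $1\le k\le m$ of multiplicity at least one. Since $h(k+1)-h(k)=n-2-k$, the function $h$ is strictly increasing on $\{0,\dots,n-2\}$ and satisfies $h(n-2)=h(n-1)=\binom n2$. Maximising $\sum_k n_k h(k)$ under these constraints therefore gives the value $\sum_{k=0}^{n-1}h(k)=\tfrac{n^3-n}{3}$, attained by exactly two multisets: $\{0,1,\dots,n-1\}$ and $\{0,1,\dots,n-2,n-2\}$. (That $h(n-2)=h(n-1)$ is precisely what produces \emph{two} optima rather than one.)

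Finally I would run the equality analysis. Equality in the ball bound forces $|B_t(u)|=t+1$ for all $0\le t\le p(u)$; in particular $|B_1(u)|=2$, so every non-centre vertex has out-degree exactly $1$, and its unique out-neighbour must lie one level lower (i.e.\ the out-neighbour function maps the level-$k$ vertices into level $k-1$). Combined with the two maximising multisets this pins down a unique digraph in each case: for $\{0,\dots,n-1\}$ a directed Hamiltonian path to $c$ together with all arcs $c\to v$, and for $\{0,\dots,n-2,n-2\}$ the same path of length $n-2$ but with the two top vertices sharing their out-neighbour. These are exactly the two configurations of Figure~\ref{fig:digraph_rad+}, and a direct check (which matches the $r=1$ case of the construction computed in the proof of Theorem~\ref{maxmurad+}) confirms both realise $\tfrac{n^3-n}{3}$. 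The main obstacles are thus the ball estimate and the bookkeeping in the equality case; everything else is summation.
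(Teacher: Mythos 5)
Your proposal is correct and follows essentially the same route as the paper: fix the dominating centre $c$, bound each vertex's out-distance sum by $h(d(u,c))=\tfrac12\bigl(d(u,c)+1\bigr)\bigl(2n-2-d(u,c)\bigr)$, optimise over the multiset of distances to $c$ using monotonicity of $h$ and $h(n-2)=h(n-1)$, and read off the two equality configurations. Your derivation of the per-vertex bound via out-ball sizes, and your explicit majorization and out-degree-one equality analysis, are just more detailed phrasings of the paper's argument (which lists the distances $1,\dots,i$ along the shortest path to $c$ and bounds all others by $i+1$), not a different method.
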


\begin{proof}
	By definition of $\rad^+$, there is some vertex $v$ such that for every vertex $u \in V(D)$ different from $v$, there is a directed edge from $v$ towards $u$.
	There is a shortest path from every $u$ to $v$, having some length $i$.
	We denote the number of vertices $u$ with $d(u,v)=i$ with $X_i$.
	Call this path $u v_{i-1}v_{i-2} \ldots v_1 v$.
	Then $d(u,v_{j})=i-j$, while for every vertex $x$ different from any such $v_j$ and $v,u$, we have $d(u,x) \le d(u,v)+d(v,x)=i+1$.
	Hence $\sum_{x \in V(G)} d(u,x) \le \sum_{j=1}^{i} j + (n-i-1)(i+1)=
	\frac12 (2n-i-2)(i+1)$.
	Note that the parabolic function $f(i)=\frac 12 (2n-i-2)(i+1)$ is an increasing function in $i$ up to $\frac{2n-3}{2}$ and $f$ obtains the same values in $n-1$ and $n-2$.
	Since $i$ is at most $n-1$, we find that 
	\begin{align*}
	W(G) &\le \sum_{i=1}^{n-1}  X_i f(i) + (n-1)\\
	&\le \sum_{i=1}^{n-1} f(i) + (n-1)\\
	&= \frac{n^3-n}3
	\end{align*}
	with equality if and only if $X_i=1$ for every $i \le n-3$, from which one can conclude that there are only the two given cases of equality.	
\end{proof}

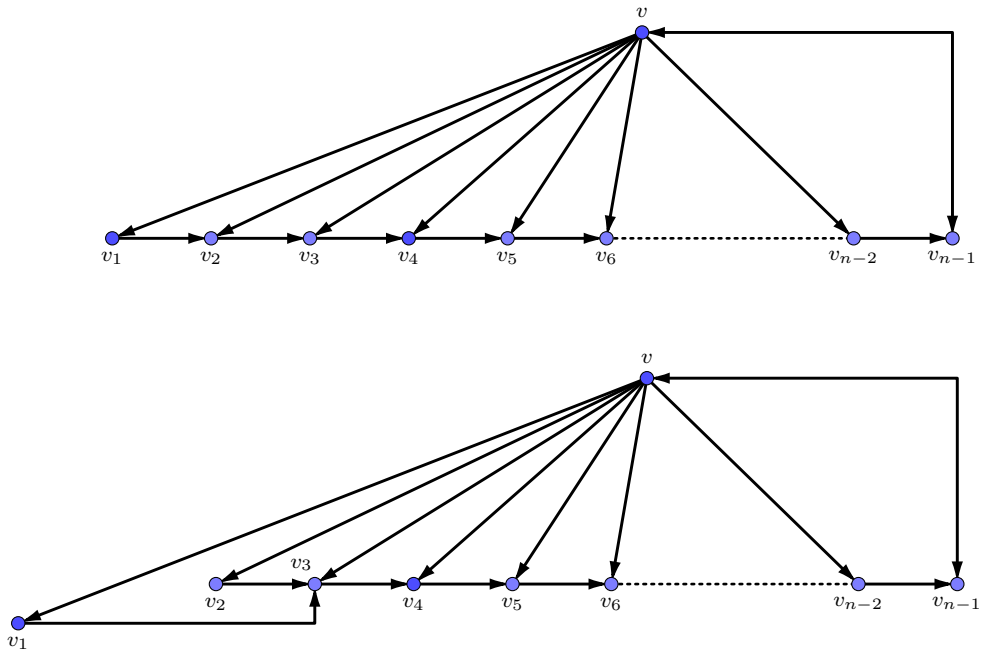
\begin{figure}[h]
	\centering

\begin{tikzpicture}[line cap=round,line join=round,>= {Latex[length=3mm, width=1.5mm]} ,x=1.3cm,y=1.3cm]
\clip(-1.3,-1) rectangle (9,2.5);
\draw [->,line width=1.1pt] (5.36,2.1) -- (0.,0.);
\draw [->,line width=1.1pt] (5.36,2.1) -- (1.,0.);
\draw [->,line width=1.1pt] (5.36,2.1) -- (2.,0.);
\draw [->,line width=1.1pt] (5.36,2.1) -- (3.,0.);
\draw [->,line width=1.1pt] (5.36,2.1) -- (4.,0.);
\draw [->,line width=1.1pt] (5.36,2.1) -- (5.,0.);
\draw [->,line width=1.1pt] (5.36,2.1) -- (7.5,0.);
\draw [<->,line width=1.1pt] (5.36,2.1) --  (8.5,2.1) -- (8.5,0.);
%\draw [->,line width=1.1pt] (11.,0.) -- (5.36,2.1) -- (5.36,2.1);
\draw [->,line width=1.1pt] (0.,0.) -- (1.,0.);
\draw [->,line width=1.1pt] (1.,0.) -- (2.,0.);
\draw [->,line width=1.1pt] (2.,0.) -- (3.,0.);
\draw [->,line width=1.1pt] (3.,0.) -- (4.,0.);
\draw [->,line width=1.1pt] (4.,0.) -- (5.,0.);
\draw [line width=1.pt,dotted] (5.,0.)-- (7.5,0.);
\draw [->,line width=1.1pt] (7.5,0.) -- (8.5,0.);
\begin{scriptsize}
\draw [fill=ududff] (0.,0.) circle (2.5pt);
\draw [fill=ududff] (5.36,2.1) circle (2.5pt);
\draw [fill=xdxdff] (1.,0.) circle (2.5pt);
\draw [fill=xdxdff] (2.,0.) circle (2.5pt);
\draw [fill=ududff] (3.,0.) circle (2.5pt);
\draw [fill=xdxdff] (4.,0.) circle (2.5pt);
\draw [fill=xdxdff] (5.,0.) circle (2.5pt);
\draw [fill=xdxdff] (7.5,0.) circle (2.5pt);
\draw [fill=xdxdff] (8.5,0.) circle (2.5pt);

\coordinate [label=center:$v$] (A) at (5.36,2.3); 
\coordinate [label=center:$v_{n-1}$] (A) at (8.5,-0.2); 
\coordinate [label=center:$v_{n-2}$] (A) at (7.5,-0.20); 

\coordinate [label=center:$v_{4}$] (A) at (3,-0.20); 
\coordinate [label=center:$v_{3}$] (A) at (2,-0.20); 
\coordinate [label=center:$v_{2}$] (A) at (1,-0.20); 
\coordinate [label=center:$v_{1}$] (A) at (0,-0.20);
\coordinate [label=center:$v_{6}$] (A) at (5,-0.20); 
\coordinate [label=center:$v_{5}$] (A) at (4,-0.20);  

\end{scriptsize}
\end{tikzpicture}	\quad
\centering	

\begin{tikzpicture}[line cap=round,line join=round,>= {Latex[length=3mm, width=1.5mm]} ,x=1.3cm,y=1.3cm]
\clip(-1.3,-0.7) rectangle (9,2.5);
\draw [->,line width=1.1pt] (5.36,2.1) -- (-1,-0.4);
\draw [->,line width=1.1pt] (5.36,2.1) -- (1.,0.);
\draw [->,line width=1.1pt] (5.36,2.1) -- (2.,0.);
\draw [->,line width=1.1pt] (5.36,2.1) -- (3.,0.);
\draw [->,line width=1.1pt] (5.36,2.1) -- (4.,0.);
\draw [->,line width=1.1pt] (5.36,2.1) -- (5.,0.);
\draw [->,line width=1.1pt] (5.36,2.1) -- (7.5,0.);
\draw [<->,line width=1.1pt] (5.36,2.1) --  (8.5,2.1) -- (8.5,0.);
%\draw [->,line width=1.1pt] (11.,0.) -- (5.36,2.1) -- (5.36,2.1);
\draw [->,line width=1.1pt] (-1,-0.4) -- (2,-0.4)-- (2.,0.);
\draw [->,line width=1.1pt] (1.,0.) -- (2.,0.);
\draw [->,line width=1.1pt] (2.,0.) -- (3.,0.);
\draw [->,line width=1.1pt] (3.,0.) -- (4.,0.);
\draw [->,line width=1.1pt] (4.,0.) -- (5.,0.);
\draw [line width=1.pt,dotted] (5.,0.)-- (7.5,0.);
\draw [->,line width=1.1pt] (7.5,0.) -- (8.5,0.);
\begin{scriptsize}
\draw [fill=ududff] (-1,-0.4) circle (2.5pt);
\draw [fill=ududff] (5.36,2.1) circle (2.5pt);
\draw [fill=xdxdff] (1.,0.) circle (2.5pt);
\draw [fill=xdxdff] (2.,0.) circle (2.5pt);
\draw [fill=ududff] (3.,0.) circle (2.5pt);
\draw [fill=xdxdff] (4.,0.) circle (2.5pt);
\draw [fill=xdxdff] (5.,0.) circle (2.5pt);
\draw [fill=xdxdff] (7.5,0.) circle (2.5pt);
\draw [fill=xdxdff] (8.5,0.) circle (2.5pt);

\coordinate [label=center:$v$] (A) at (5.36,2.3); 
\coordinate [label=center:$v_{n-1}$] (A) at (8.5,-0.2); 
\coordinate [label=center:$v_{n-2}$] (A) at (7.5,-0.20); 
\coordinate [label=center:$v_{6}$] (A) at (5,-0.20); 
\coordinate [label=center:$v_{5}$] (A) at (4,-0.20); 
\coordinate [label=center:$v_{4}$] (A) at (3,-0.20); 
\coordinate [label=left:$v_{3}$] (A) at (2,0.20); 
\coordinate [label=center:$v_{2}$] (A) at (1,-0.20); 
\coordinate [label=center:$v_{1}$] (A) at (-1,-0.6); 

\end{scriptsize}
\end{tikzpicture}
\caption{The two extremal digraphs with outradius $1$ and maximal Wiener index}
	\label{fig:digraph_rad+}
\end{figure}
\section{Conclusion}\label{conc}
	The question of determining the minimum total distance among all graphs or digraphs of order $n$ and (out)radius $r$ and characterizing the extremal (di)graphs has been solved for $n$ large enough compared with $r$.
	This question has been solved only asymptotically in the digraph case using the radius.
	In both the graph- and digraph case, it might be challenging to solve the question completely for every order, as there might be sporadic extremal graphs or digraphs other than $Q_3$ to the two conjectures. 
	The question about finding the maximum total distance given $n$ and $r$ has been considered as well in the three scenarios. 
	The asymptotic results for these and related questions are summarized in Table~\ref{table1} and Table~\ref{table2}. For purpose of brevity and intuition, the results are summarized in terms of average rather than total distances.
	
	A problem which may be interesting to solve exactly is the following one.
%	An open problem might be to prove that the digraph in figure~\ref{fig:digraph_rad+r} is the unique extremal digraph for a fixed out-radius $r>1$.
	\begin{p}
		What is the maximum Wiener index of a biconnected digraph with out-radius $r>1$?
		Characterize the extremal digraphs.
	\end{p}
		It may be possible that the digraph in Figure~\ref{fig:digraph_rad+r} is the unique extremal digraph for this problem, corresponding to an the analog of Theorem~\ref{rad+=1}.

\begin{table}[h]
	
	\centering
	
	\begin{tabular}{ |l | r | r | }
		\hline 
		$\mu{(G)}$ & min & max\\
		\hline
		graphs &  $1+d^2\Theta\left( \frac{1}{n} \right)$  \cite{P84} & $d - d^{1.5} \Theta\left( \frac{1}{\sqrt{n}} \right) $ \cite{SC19}\\
		&  $1+r^2\Theta\left( \frac{1}{n} \right)$ Thm~\ref{main}& $2r - r^{1.5} \Theta\left( \frac{1}{\sqrt{n}} \right) $ Cor~\ref{cor_radius}\\
		trees & $2+d^2\Theta\left( \frac{1}n\right)$ \cite{LP} &  $\frac d2 + \lfloor \frac d2 \rfloor - d^{1.5} \Theta \left( \frac{1}{\sqrt{n}} \right)$ \cite{SC19}  \\
		& $2+r^2\Theta\left( \frac{1}n\right)$ \cite{LP} &  $2r - r^{1.5} \Theta\left( \frac{1}{\sqrt{n}} \right)$ Cor~\ref{cor_radius} \\
		\hline
	\end{tabular}
	\caption{minimum and maximum average distances for graphs}
	\label{table1}
\end{table}
%for digraphs
\begin{table}[h]
	
	\centering
	\begin{tabular}{ |l | c | r | }
		\hline
		$\mu (G)$ for digraphs & min & max\\
		\hline
		$\rad^+$/$\rad^-$ &  $1+r^2\Theta \left( \frac{1}{n} \right)$ Thm~\ref{maindi} & $\frac n3 + r \Theta(1)$ Thm~\ref{maxmurad+}\\
		$\rad$ & $1+r^2\Theta\left( \frac{1}{n} \right)$ Thm~\ref{min_rad}&$2r-r^2 \Theta(\frac 1n)$ Thm~\ref{maxmurad}\\
		d & $1+d^2\Theta\left( \frac{1}{n} \right)$ \cite{P84} & $d-d^2\Theta(\frac{1}{n})$ \cite{SC19} \\
		\hline
	\end{tabular}
	\caption{minimum and maximum average distances for digraphs}
	\label{table2}
\end{table}

	We also obtained relationships between the extremal graphs and digraphs attaining the mimumum average distance/ maximum size given order and radius/ diameter. 

Perhaps the most basic question arising from this paper is conjecture~\ref{VZdi}, what is the maximum size of biconnected digraphs given order and outradius?

\bibliographystyle{abbrv}
\bibliography{MaxMuD}

\begin{thebibliography}{1}

\bibitem{JG}
J.~r. Bang-Jensen and G.~Gutin.
\newblock {\em Digraphs}.
\newblock Springer Monographs in Mathematics. Springer-Verlag London, Ltd.,
  London, 2001.
\newblock Theory, algorithms and applications.

\bibitem{SC19}
S.~{Cambie}.
\newblock {The asymptotic resolution of a problem of Plesn\'{i}k}.
\newblock {\em arXiv e-prints}, page arXiv:1811.08334, Nov. 2018.

\bibitem{Chen}
Y.~{Chen}, B.~{Wu}, and X.~{An}.
\newblock {Wiener index of graphs with radius two.}
\newblock {\em {ISRN Comb.}}, 2013:5, 2013.

\bibitem{F}
G.~S. Fridman.
\newblock On critical and maximal digraphs.
\newblock {\em CoRR}, abs/1807.09026, 2018.

\bibitem{LP}
H.~Liu and X.-F. Pan.
\newblock On the {W}iener index of trees with fixed diameter.
\newblock {\em MATCH Commun. Math. Comput. Chem.}, 60(1):85--94, 2008.

\bibitem{Ore68}
{\O}.~{Ore}.
\newblock {Diameters in graphs.}
\newblock {\em {J. Comb. Theory}}, 5:75--81, 1968.

\bibitem{P84}
J.~Plesn\'{\i}k.
\newblock On the sum of all distances in a graph or digraph.
\newblock {\em J. Graph Theory}, 8(1):1--21, 1984.

\bibitem{VZ67}
V.~G. Vizing.
\newblock On the number of edges in a graph with a given radius.
\newblock {\em Dokl. Akad. Nauk SSSR}, 173:1245--1246, 1967.

\end{thebibliography}

\end{document}